\crefname{hypothesis}{Hypothesis}{Hypotheses}
\title{TWO-SIDED SINGULAR CONTROL OF AN INVENTORY WITH UNKNOWN DEMAND TREND\thanks{
Funded by the Deutsche Forschungsgemeinschaft (DFG, German Research Foundation) – SFB 1283/2 2021 – 317210226.}}
\author{Salvatore Federico\thanks{
Dipartimento di Economia, Universit\`a di Genova, Piazza F.\ Vivaldi 5, 16126, Genova, Italy 
(\email{salvatore.federico@unige.it}).} 
\and Giorgio Ferrari\thanks{
Center for Mathematical Economics (IMW), Bielefeld University, Universit\"atsstrasse 25, 33615, Bielefeld, Germany 
(\email{giorgio.ferrari@uni-bielefeld.de}).} 
\and Neofytos Rodosthenous\thanks{
Department of Mathematics, University College London, Gower St, London WC1E 6BT, UK (\email{n.rodosthenous@ucl.ac.uk}).} }
\def \E{\mathsf{E}}
\def \P{\mathsf{P}}
\def \Q{\mathsf{Q}}
\def \R{\mathbb{R}}
\def \F{\mathbb{F}}
\def\d{\mathrm{d}}
\newcommand{\argmax}{\text{argmax}}
\definecolor{red}{rgb}{1.0,0.0,0.0}
\definecolor{blu}{rgb}{0.0,0.0,1.0}
\definecolor{gre}{rgb}{0.03,0.50,0.03}
\renewcommand{\hat}{\widehat}
\newtheorem{assumption}[theorem]{Assumption}
\newcommand{\cvd}{$\quad\Box $
                  \medskip}
\begin{document}

\maketitle

\begin{abstract}
We study the problem of optimally managing an inventory with unknown demand trend. Our formulation leads to a stochastic control problem under partial observation, in which a Brownian motion with non-observable drift can be singularly controlled in both an upward and downward direction. 
We first derive the equivalent separated problem under full information, with state-space components given by the Brownian motion and the filtering estimate of its unknown drift, and we then completely solve this latter problem. Our approach uses the transition amongst three different but equivalent problem formulations, links between two-dimensional bounded-variation stochastic control problems and games of optimal stopping, and probabilistic methods in combination with refined viscosity theory arguments. We show substantial regularity of (a transformed version of) the value function, we construct an optimal control rule, and we show that the free boundaries delineating (transformed) action and inaction regions are bounded globally Lipschitz continuous functions. To our knowledge this is the first time that such a problem has been solved in the literature.
\end{abstract}

\begin{keywords}
bounded-variation stochastic control, partial observation, inventory management, Dynkin games, free boundaries
\end{keywords}

\begin{AMS}
93E20, 93E11, 91A55, 49J40, 90B05
\end{AMS}

\section{Introduction}

In this paper, we consider the optimal management of inventory when the demand is stochastic and partially observed. 
There exists an enormous literature on optimal inventory management (see, e.g.~\cite{Zipkin} for an overview and the significance of inventory control in operations and profitability of companies). 
The optimal singular/impulsive control literature of stochastic inventory systems has so far assumed that the dynamics of the inventory is fully known to decision makers, see e.g.~\cite{
Bath66, 
DY13,
DY13_2, 
HarrisonTak, 
HarrisonTay78, 
HYZ17, 
Taksar85, 
XZZ19,  
YYY20}, 
amongst many others.  
Some of the most celebrated results are the optimality of (constant) threshold strategies determining $(a)$ base-stock policies -- maintaining inventory above a fixed shortage level -- and $(b)$ restrictions on the size of inventory, in order to manage storage-related costs. In this paper, we generalise the existing literature on the singular control of inventories by assuming that the demand rate or the mean of the random demand for the product is unknown to decision makers. 
This can be relevant to companies operating in newly established markets or producing a novel good, for which there is limited knowledge about the demand trend. 
In particular, we will show how the aforementioned optimal strategies are no longer triggered by constant thresholds, but by functions of the decision maker's learning process of the unknown demand rate. 
We further note that the analysis and results in this paper can also contribute to applications way beyond the inventory management literature; for instance, to cash balance management problems (see, e.g.~\cite{EppenFama69}), when the drift of the cash process is unknown to managers.

\textbf{The model and general results.} We consider decision makers who can observe in real time the evolution of the level of a Brownian inventory system $S_t$, which represents the production minus the stochastic demand for the product at time $t$ 
(see \cite{HarrisonTak, Taksar85, YYY20}). 
The inventory has a ``net demand'' rate $\mu$, {\it unknown} to decision makers, and a stochastic part modelling the demand volatility. 
We assume that the random variable $\mu \in \{\mu_0, \mu_1\}$, for $\mu_0, \mu_1 \in \R$, and the decision makers' prior belief is $\pi:=\P(\mu=\mu_1)\in(0,1)$. 
This is continuously updated as new information is revealed according to the natural filtration $\mathcal{F}^S_t$ of $S$, 
and takes the form $\Pi_t:=\P(\mu=\mu_1\,|\,\mathcal{F}^S_t)$ according to standard filtering techniques (see \cite{LSbook} for a survey).
Decision makers can control the inventory via a bounded-variation process $P_t = P_t^+ - P_t^-$, where $P_t^\pm$ are increasing processes defining the total amount of increase/decrease of inventory up to time $t$. 
The controlled inventory level is therefore given by 
$X_t = x + \mu t + \eta B_t + P_t^+ - P_t^-$, for $\eta>0$ and all $t \geq 0$; positive values model the excess inventory, while the absolute value of negative $X$ models the backlog in production. 

Both levels of excess inventory and backorder bear (non-necessarily symmetric) holding and shortage costs per unit of time, modelled via a suitable convex function $C(X)$. 
High holding/storage costs 
for large $X$ could suggest unloading part of excess inventory (e.g.~start promotions, send to outlets, donate, ship to another facility, or destroy) at a cost $K^-$ proportional to unloaded volume $P^-$. On the other hand, high shortage costs due to undesirable low $X$ could suggest placing inventory replenishment orders at a cost $K^+$ proportional to the ordered volume $P^+$.
However, there is a trade off due to the costs $K^\pm$ of controlling the inventory $X$ to keep $C(X)$ at ``reasonable" levels.
The question we thus study is {\it ``What is the optimal inventory management strategy that minimises the total expected (discounted) future holding, shortage and control costs, when the demand rate is unknown?''}. 
We allow the rate of increase/reduction $\d P^\pm$ to be unbounded and have an instantaneous effect on $X$, hence the question is mathematically formulated as a bounded-variation stochastic control problem of a linearly controlled one-dimensional diffusion with the novelty of a random (non-observable) drift $\mu$. 

Indeed, we prove the existence of an optimal control strategy $P^{\star \pm}$ and characterise it via two boundary functions of the belief process $\Pi$, which split the space in three distinct but connected regions: 
$(a)$ An action region divided in the areas below or above the boundaries, so that when $X$ is relatively small or large, decision makers should increase or decrease $X$ via $P^{\star \pm}$, respectively, to bring $X$ inside the area between the two boundaries; 
and $(b)$ an intermediate waiting (inaction) region, which is precisely the area between the two boundaries.
We further prove the monotonicity of these boundaries and completely characterise them in terms of monotone Lipschitz continuous curves solving a system of nonlinear integral equations.
To the best of our knowledge, the study and characterisation of the boundaries defining the solution of a bounded-variation stochastic control problem under partial information on the underlying diffusion dynamics, has never been addressed in the literature.

\textbf{Our contributions, approach and overview of mathematical analysis.} Our contribution in this paper is twofold.  
From the point of view of its application, even though the literature on the optimal management of inventory is extremely rich, as already discussed, there is no model where the demand is assumed to be partially observed and lump-sum as well as singularly continuous actions on the inventory are allowed. 
From the mathematical theory perspective, the literature on the optimal policy characterisation in singular stochastic control problems with partial observation is limited, and actually deals only with monotone controls 
\cite{CCF20, DeA20, DecampsVill, OksSul12}. 
On the contrary, we allow the decision maker to both decrease and increase the underlying process by using controls of bounded-variation. 
Our paper thus provides a first example where partial observation features have been considered in the setting of a bounded-variation control problem. By combining the well-established connection to Dynkin games, probabilistic methods of free-boundary theory and refined viscosity theory arguments, we present a methodology that allows to achieve the necessary regularity of the value function, leading to a characterisation of the optimal control rule.
This is our second main contribution, on which we elaborate in the remaining of this section. 
Note that, other scenarios of partial information on the drift, considered for  
investment timing \cite{DMV05}, 
asset trading \cite{DG12}, 
optimal liquidation  \cite{EkstromVai}, 
contract theory \cite{DeMS16}, lead to different mathematical formulations.

By relying on classical filtering theory (see \cite{LSbook}), we first derive the equivalent Markovian ``separated problem'', which is a genuine two-dimensional bounded-variation singular stochastic control problem $V$ with diffusive state-space dynamics $(X, \Pi)$. 
The traditional ``guess and verify'' approach is not effective, since the associated variational formulation involves partial differential equations (PDEs) with (gradient) boundary conditions, whose explicit solutions are not possible in general. 
We instead use a more direct approach that allows for a thorough study of the value function $V$'s regularity and structure, eventually leading to the optimal control strategy's characterisation. 

Via changes of coordinates we first transform the original controlled process $(X,\Pi)$ into $(X,\Phi)$ with (degenerate) decoupled dynamics and later into $(X,Y)$ for the problem's intrinsic parabolic formulation (see also~\cite{DeA20,JP17}). 
We connect our resulting two-dimensional bounded-variation stochastic control problems, under each formulation, to suitable zero-sum optimal stopping (Dynkin) games with two-dimensional, uncontrolled dynamics. 
We manage to characterise each games' optimal stopping strategies via {\it interlinked} pairs of monotone and bounded free boundary functions $a_\pm(\pi), b_\pm(\varphi)$ and $c_\pm(y)$, respectively. 
By using our probabilistic methodology in combination with viscosity theory arguments\footnote{It is worth noticing that the combination of viscosity arguments and probabilistic techniques of free-boundary problems have been already employed for the study of bounded-variation control problems in \cite{Federico2014}, \cite{FeFeSch-1} and \cite{FeFeSch-2}. However, in those papers the dynamic programming equation takes the form of a parameter-dependent ODE with gradient constraints, while in our paper it is a degenerate PDE with gradient constraints.} and switching between these three equivalent formulations: 
$(a)$ we achieve the notable $C^1$-global regularity of the transformed value function $\overline{V}(x,\varphi)$, and we deduce that its version $\widehat{V}(x,y)$ is actually such that $\widehat{V} \in C^{1}(\R^2;\R)$ and $\widehat{V}_{xx}$ is bounded in its relative continuation region; 
$(b)$ we use these properties in order to construct an optimal control strategy in terms of the likelihood ratio-dependent process $t \mapsto b_\pm (\Phi_t)$ according to a Skorokhod reflection; 
$(c)$ we obtain global Lipschitz continuity of the free boundaries $c_\pm(y)$, employed to show the global $C^1$-regularity of the Dynkin game's value $\widehat v(x,y)$ and obtain a system of nonlinear integral equations solved by $c_\pm$.
It is worth observing that backtracking the involved change of variables, the characterisation of $c_\pm$ effectively turns into a characterisation of $b_{\pm}$ defining the optimal control policy (and consequently of $a_\pm$ in the original $(x,\pi)$--coordinates). 

The Lipschitz regularity result is of particular independent interest, given its importance in obstacle problems (see the introduction of \cite{DeAStabile} for a detailed account on this and its related literature). 
The simple argument of our proof, exploiting the geometry of the $(x,\varphi)$-plane and the particular structure of its transformation into the $(x,y)$-plane, provides a method -- alternative to the more technical approach developed in \cite{DeAStabile} -- for obtaining the Lipschitz regularity of the optimal stopping boundaries.

Finally, note that by using our methodology, we manage to obtain the minimal (necessary) regularity in order to construct an optimal control strategy and verify its optimality. 
As in multi-dimensional singular stochastic control settings proving regularity properties of the control value function can be very challenging, having a methodology that takes a different route by effectively combining various techniques, can be helpful in studying other problems with similar structure.

\textbf{Structure of the paper.} The rest of this paper is organised as follows. 
In Section \ref{sec:model}, we present the model, formulate the control problem, and derive the separated problem $V$. 
In Section \ref{sec:OSgame}, we derive the first related optimal stopping game.  
Section \ref{sec:2transf} introduces the first useful change of coordinates. 
Section \ref{sec:HJB} then studies the regularity of the control problem's (transformed) value function $\overline{V}$. 
Section \ref{sec:Verif} presents the verification theorem and construction of an optimal control. 
Finally, in Section \ref{sec:FBchar}, we: 
introduce the last change of variables; 
obtain the Lipschitz-continuity of the corresponding (transformed) free boundaries $c_\pm$; 
prove the smooth-fit property of the transformed Dynkin game's value function $\widehat{v}$; 
and derive the integral equations for $c_\pm$.

\section{Problem Formulation and the Separated Problem} 
\label{sec:model}

On a complete probability space $(\Omega,\mathcal{F},\P)$, we define a one-dimensional Brownian motion $(B_t)_{t\geq 0}$ whose $\P$-augmented natural filtration is denoted by $(\mathcal{F}_t^B)_{t\geq 0}$. Moreover, we define a random variable $\mu$ which is independent of the Brownian motion $B$ and can take two possible real values, namely $\mu\in \{\mu_0,\mu_1\}$, where $\mu_0,\mu_1\in\R$. Without loss of generality, we assume henceforth that $\mu_1>\mu_0$ and that 
$\pi:=\P(\mu=\mu_1)\in(0,1).$

In absence of any intervention, the underlying (stochastic inventory) process $S_t$ as observed by the decision maker, follows the dynamics 
$\d S_t=\mu \d t+\eta \d B_t$, with $S_0=x \in \R$, 
for some $\eta>0$. Recall that the drift $\mu$ of the process $S$ is not observable by the decision maker, who can only monitor the evolution of the process $S$ itself. In light of this observation, the decision maker select their control strategy $P$ based solely on their observation of the process $S$. 
By denoting the natural filtration of any process $Y$ by $\mathbb{F}^Y:=(\mathcal{F}_t^Y)_{t\geq 0}$, we can therefore define the set of admissible controls
\begin{eqnarray*}
\mathcal{A}&:=&\{P:\Omega\times\R^+\to\R \ \mbox{such that} \ t\mapsto P_t \ \mbox{is right-continuous, (locally) of bounded } \\ \nonumber 
&& \mbox{variation and}\,\, P \ \mbox{is}  \ \F^S-\mbox{adapted}\}.
\end{eqnarray*}
To be more precise, we consider the minimal decomposition of the bounded-variation control $P \in \mathcal{A}$ to be 
$P_t=P_t^+ - P_t^-,$ 
where $P^+$ and $P^-$ are then nondecreasing, right-continuous $\F^S$--adapted processes. From now on, we set $P^{\pm}_{0-}=0$ a.s.\ for any $P \in \mathcal{A}$.
Hence, the reference (controlled inventory) process is given by 
\begin{equation*}
X_t^P := S_t + P_t = x+ \mu t + \eta B_t + P_t, 
\qquad \text{where } \, P\in\mathcal{A}.
\end{equation*}
Note that, the uncontrolled inventory process ($P \equiv 0$) takes the form $X^0 = S$. 

Given the aforementioned setting, the decision maker's goal is to minimise the overall (discounted) cost of holding, shortage and controlling the inventory process. 
In mathematical terms, the bounded-variation control problem of the decision maker is given by 
\begin{equation}
\label{functional}
\inf_{P\in\mathcal{A}} \E\left[\int_0^\infty e^{-\rho t} \left(C(X_t^P)\d t+K^+ \d P_t^+ +K^-\d P_t^-\right)\right],
\end{equation}
where 
$\E$ denotes the expectation under the probability measure $\P$, 
$\rho>0$ is the decision maker's discount rate of future costs, 
$K^+,K^->0$ are the marginal costs per unit of control exerted on $X^P$, and 
$C:\R\to\R^+$ is a holding and shortage cost function which satisfies the following \textbf{standing assumption}.

\begin{assumption}
\label{ass:C} 
There exists constants $p\geq2$, $\alpha_0,\alpha_1,\alpha_2>0$ such that the following hold true:
\begin{enumerate} 
\item[(i)] $0  \leq C(x) \leq \alpha_0(1 + |x|^p),$ for every $x \in \R$;

\vspace{-1mm}
\item[(ii)] $
|C(x)-C(x')|\leq \alpha_1 \big(1 + C(x)+ C(x')\big)^{1-\frac{1}{p}} |x-x'|,
$ for every $x,x'\in\R$;
\item[(iii)]  
$0 \leq  \lambda C(x)+(1-\lambda)C(x')-C(\lambda x + (1-\lambda) x') \leq \alpha_2 \lambda(1-\lambda)(1 + C(x) + C(x'))^{\left(1-\frac{2}{p}\right)}|x-x'|^2$, for every $x,x'\in\R$ and $\lambda \in(0,1)$; 
\item[(iv)] $\lim_{x \to \pm \infty}C'(x)= \pm \infty.$
\end{enumerate}
\end{assumption}
Notice that Assumption \ref{ass:C}.$(iii)$ above implies that $C$ is convex and locally semiconcave. Hence, by \cite[Corollary 3.3.8]{CS}, we have $C\in C^{1,\text{Lip}}_{\text{loc}}(\R;\R^+)$ (the class of continuously differentiable functions, whose first derivative is locally Lipschitz), so that the derivative in $(iv)$ exists. A classical quadratic cost $C(x) = (x-\overline{x})^2$, for some target level $\overline{x} \in \R$, clearly satisfies Assumption \ref{ass:C}.

Given the feature of a non-observable $\mu$, \eqref{functional} is not Markovian and cannot be therefore tackled via a dynamic programming approach. 
We derive below a new equivalent Markovian problem under full information, the so-called ``separated problem''. This will be then solved by exploiting its connection to a zero-sum game of optimal stopping and by a careful analysis of the regularity of its value function.

\subsection{The separated problem}
\label{sec:sepprobl}

In order to derive the equivalent problem under full information, we use standard arguments from filtering theory (see, e.g.~\cite[Section 4.2]{LSbook}) and we define the ``belief'' process 
$\Pi_t:=\mathbb{P}(\mu=\mu_1\,|\,\mathcal{F}^S_t)$, $t\geq0,$
according to which, decision makers update their beliefs on the (true) value of the drift $\mu$ based on the arrival of new information via the observation of the process $S$. Then, the dynamics of $X^P$ and $\Pi$ can be written as
\begin{equation}
\label{XPi}
\begin{cases}
\d X^P_t = (\mu_1\Pi_t + \mu_0(1-\Pi_t))\d t + \eta \d W_t+\d P_t, 
& X^P_{0^-}=x\in \R,\\
\d \Pi_t \;\,=\gamma \Pi_t(1-\Pi_t)\d W_t, 
& \Pi_0=\pi \in (0,1),
\end{cases}
\end{equation}
where the innovation process $W$ is an $\F^S$-Brownian motion on $(\Omega,\mathcal{F},\P)$ according to L\'evy's characterisation theorem (see, e.g., \cite[Theorem 4.1]{LSbook}), and 
$\gamma:=({\mu_1-\mu_0})/{\eta}>0.$
The triplet $(X^P, \Pi, P)$ is an $\F^S$-adapted time-homogeneous process on $(\Omega,\mathcal{F},\P)$. 
In \eqref{XPi}, the (unknown/non-observable) drift $\mu$ of $X$ in the original model is replaced with its filtering estimate $\E[\mu \,|\, \mathcal{F}^S_t]$. Moreover, the belief (learning) process $\Pi = (\Pi_t)_{t\geq 0}$ involved in the filtering is a bounded martingale on $[0,1]$ such that $\Pi_\infty \in \{0,1\}$, due to the fact that all information eventually gets revealed at time $t=\infty$.

Then, for $(X^P,\Pi)$ as in \eqref{XPi}, with $(x,\pi) \in \mathcal{O}:=\R \times (0,1)$, we define 
\begin{equation}
\label{functional2}
V(x,\pi):=\inf_{P\in\mathcal{A}} \E\left[\int_0^\infty e^{-\rho t} \left(C(X^P_t)\d t+K^+ \d P_t^+ +K^-\d P_t^-\right)\right],
\end{equation}
where all processes involved are now $\F^S$-adapted. Hence, \eqref{functional2} is a two-dimensional Markovian singular stochastic control problem with controls of bounded variation. 
By uniqueness of the strong solution to the belief equation, a control $P^{\star}$ is optimal for \eqref{functional} if and only if it is optimal for \eqref{functional2}, and the values in \eqref{functional} and \eqref{functional2} coincide.

Note that, in light of the dynamics of $(X^P,\Pi)$ in \eqref{XPi}, a high value of $\Pi$ close to $1$ would imply that the decision maker has a strong belief in a high drift $\mu_1$, while a low $\Pi$ close to $0$ would imply, on the contrary, a strong belief in a low drift $\mu_0$ scenario.

\begin{remark}[Full information cases]
In the formulation \eqref{functional}, the case of prior belief $\pi:=\P(\mu=\mu_1) \in \{0,1\}$ implies the certainty of the decision maker regarding whether $\mu=\mu_0$ or $\mu=\mu_1$. Hence, in this case, there is no uncertainty about the value of the drift $\mu$, which is not a random variable any more. 
Respectively, in the formulation \eqref{functional2}, the case of prior belief $\Pi_0=\pi \in \{0,1\}$ yields that the belief process $\Pi$ will actually remain constant through time, due to its dynamics which imply that $\Pi_t=\pi$ for all $t>0$.
Therefore, we equivalently have that such values of $\pi \in \{0,1\}$ correspond to the full information cases. 

In these cases, the optimal control problem becomes a standard one-dimensional bounded-variation stochastic control problem, for which an early study can be found in \cite{HarrisonTak}. The resulting optimal control strategy is triggered by two constant boundaries within which the process $X^P$ is kept (via a Skorokhod reflection).
\end{remark}

Given the convexity of $C$ as in Assumption \ref{ass:C}, and the linear structure of $P \mapsto X^P$ in \eqref{XPi}, we can show the next result by following standard arguments based on Koml\'os' theorem (see, e.g., \cite[Proposition 3.4]{Federico2014}  or \cite[Theorem 3.3]{KW}).

\begin{proposition}
\label{prop:existence}
There exists an optimal control $P^\star$ for \eqref{functional2}. Moreover, this is unique (up to indistinguishability) if $C$ is strictly convex. 
\end{proposition}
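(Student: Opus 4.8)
The plan is to prove existence of an optimal control via the direct method of the calculus of variations, exploiting the convexity of $C$ and the linearity of $P \mapsto X^P$, with Koml\'os' theorem playing the role of a compactness device. First I would take a minimising sequence $(P^n)_{n\geq1} \subset \mathcal{A}$, i.e.\ a sequence of admissible controls along which the functional in \eqref{functional2} converges to $V(x,\pi)$. Without loss of generality one may assume the associated costs are uniformly bounded by $V(x,\pi)+1$, say, for all $n$. Since $K^\pm>0$, this uniform bound immediately yields a uniform bound on $\E[\int_0^\infty e^{-\rho t}\,\d P^{n,\pm}_t]$; interpreting $e^{-\rho t}\,\d P^{n,\pm}_t$ as (random) measures on $\R^+$, this is precisely the kind of $L^1$-type bound needed to invoke Koml\'os' theorem. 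Applying it (componentwise to $P^{n,+}$ and $P^{n,-}$, along a common subsequence, which I will not relabel), I obtain processes $P^{\star\pm} \in \mathcal{A}$ such that the Ces\`aro means $\widetilde{P}^{n,\pm}:=\frac1n\sum_{k=1}^n P^{k,\pm}$ converge to $P^{\star\pm}$ in an appropriate almost-sure sense (e.g.\ weak convergence of the measures $\d\widetilde P^{n,\pm}$, or pointwise at continuity points), and one checks $P^\star:=P^{\star+}-P^{\star-}$ is right-continuous, of locally bounded variation and $\F^S$-adapted, hence in $\mathcal{A}$.

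The second step is to pass to the limit in the cost functional. Here the linearity of $P \mapsto X^P$ from \eqref{XPi} is crucial: the Ces\`aro-averaged control $\widetilde P^n$ produces state $X^{\widetilde P^n}_t = \frac1n\sum_{k=1}^n X^{P^k}_t$, so $X^{\widetilde P^n}_t \to X^{P^\star}_t$ in the same almost-sure sense as the controls. For the running-cost term I would use convexity of $C$ (Jensen) to get $\E[\int_0^\infty e^{-\rho t} C(X^{\widetilde P^n}_t)\,\d t] \le \frac1n\sum_{k=1}^n \E[\int_0^\infty e^{-\rho t} C(X^{P^k}_t)\,\d t]$, whose right-hand side converges to $V(x,\pi)$; combined with Fatou's lemma (applicable since $C\ge0$) and the a.s.\ convergence $X^{\widetilde P^n}_t \to X^{P^\star}_t$, this gives $\E[\int_0^\infty e^{-\rho t} C(X^{P^\star}_t)\,\d t] \le V(x,\pi)$ up to the control-cost terms. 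For the control-cost terms $K^\pm\E[\int_0^\infty e^{-\rho t}\,\d P^{\star\pm}_t]$, I would use lower semicontinuity of $P^\pm \mapsto \E[\int_0^\infty e^{-\rho t}\,\d P^\pm_t]$ under the weak convergence of measures (the discount factor $e^{-\rho t}$ is continuous and nonnegative, so this is a standard portmanteau-type lower-semicontinuity statement, again compatible with Ces\`aro averaging since that functional is linear). Adding the three pieces yields that the cost of $P^\star$ is $\le V(x,\pi)$, and since $P^\star\in\mathcal{A}$ the reverse inequality is trivial, so $P^\star$ is optimal.

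For uniqueness under strict convexity of $C$, I would argue by contradiction: if $P^{\star,1}\neq P^{\star,2}$ were two optimal controls (not indistinguishable), then $P^{\star}:=\tfrac12(P^{\star,1}+P^{\star,2})\in\mathcal{A}$ by convexity of $\mathcal{A}$, and by linearity $X^{P^\star}=\tfrac12(X^{P^{\star,1}}+X^{P^{\star,2}})$. The control-cost terms are linear in $P$, so they average exactly; the running-cost term, by strict convexity of $C$, satisfies $C(X^{P^\star}_t) \le \tfrac12 C(X^{P^{\star,1}}_t)+\tfrac12 C(X^{P^{\star,2}}_t)$ with strict inequality on the event $\{X^{P^{\star,1}}_t \neq X^{P^{\star,2}}_t\}$, which (since $P^{\star,1}\neq P^{\star,2}$) has positive $\d t\otimes\d\P$-measure. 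This would make the cost of $P^\star$ strictly below $V(x,\pi)$, a contradiction; hence $X^{P^{\star,1}}=X^{P^{\star,2}}$ up to a null set, and since $X^{P}-S=P$, the two controls are indistinguishable.

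The main obstacle I anticipate is the rigorous bookkeeping in Step 2 — namely ensuring that the mode of convergence delivered by Koml\'os' theorem is strong enough to justify both the a.s.\ convergence $X^{\widetilde P^n}_t\to X^{P^\star}_t$ for a.e.\ $t$ and the lower semicontinuity of the control-cost functionals, while simultaneously being compatible with passing from the original minimising sequence to its Ces\`aro means. This is exactly the technical point handled in \cite[Proposition 3.4]{Federico2014} and \cite[Theorem 3.3]{KW}, so I would follow their treatment closely; the growth bound in Assumption \ref{ass:C}.$(i)$ together with standard moment estimates on $X^P$ (and the fact that the control costs already bound the total variation of $P^n$) provide the uniform integrability needed to make Fatou and the averaging arguments rigorous.
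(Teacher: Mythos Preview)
Your proposal is correct and follows essentially the same approach as the paper: the paper does not give a detailed proof but simply observes that, given the convexity of $C$ and the linear structure of $P\mapsto X^P$, the result follows from standard arguments based on Koml\'os' theorem, citing precisely \cite[Proposition 3.4]{Federico2014} and \cite[Theorem 3.3]{KW}. Your outline of the direct method via a minimising sequence, Ces\`aro-mean convergence from Koml\'os, and lower semicontinuity through Fatou and convexity is exactly that standard argument, and your uniqueness argument under strict convexity is the expected one.
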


\section{The First Related Optimal Stopping Game}
\label{sec:OSgame}

We now derive a zero-sum optimal stopping game (Dynkin game) related to $V$, and we provide preliminary properties of its value function and of the geometry of its state space. In this section, the uncontrolled process $X^0$ with $P_t \equiv 0$ for all $t \geq 0$ becomes involved in the analysis, so we recall from \eqref{XPi} that $(X^0_t,\Pi_t)_{t\geq0} \equiv(S_t, \Pi_t)_{t\geq0}$ is the two-dimensional strong Markov process solving 
\begin{equation}
\label{X0Pi}
\begin{cases}
\d X^0_t \,= (\mu_1\Pi_t + \mu_0(1-\Pi_t))\d t + \eta \d W_t, 
& X^0_{0}=x\in \R,\\
\d \Pi_t \;\,=\gamma \Pi_t(1-\Pi_t)\d W_t, 
& \Pi_0=\pi \in (0,1),
\end{cases}
\end{equation}

\begin{proposition}
\label{contv}
Consider the process $(X^0_t,\Pi_t)_{t\geq0}$ defined in \eqref{X0Pi} and define
\begin{equation}
\label{optstop}
v(x,\pi):=\inf_{\sigma}\sup_{\tau}\E_{(x,\pi)}\bigg[\int_0^{\tau\wedge\sigma} \hspace{-3mm}e^{-\rho t}C'(X_t^0)\d t-K^+e^{-\rho \tau} \mathds{1}_{\{\tau<\sigma\}}+K^-e^{-\rho \sigma} \mathds{1}_{\{\tau>\sigma\}}\bigg]
\end{equation}
where the optimisation is taken over the set of $\F^{W}$-stopping times and $\E_{(x,\pi)}$ denotes the expectation conditioned on $(X^0_0,\Pi_0)=(x,\pi)\in \mathcal{O}$.
Consider also the control value function $V(x,\pi)$ defined in \eqref{functional2}. Then, we have the following properties: 
\begin{enumerate}
\item[(i)] $x \mapsto V(x,\pi)$ is differentiable and $v(x,\pi) = V_x(x,\pi)$.
\item[(ii)] $x \mapsto V(x,\pi)$ is convex and therefore $x \mapsto v(x,\pi)$ is nondecreasing.
\item[(iii)] $\pi \mapsto v(x,\pi)$ is nondecreasing.
\item[(iv
)] $(x,\pi) \mapsto v(x,\pi)$ is continuous on $\R\times (0,1)$.
\end{enumerate}
\end{proposition}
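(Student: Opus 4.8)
The plan is to establish the connection between the bounded-variation control problem $V$ and the Dynkin game $v$ via a formal ``differentiation in $x$'' of the control problem, made rigorous by a direct comparison of value functions. The key structural fact is that the control $P^{\pm}$ acts additively on the $x$-coordinate, so that the marginal cost of a unit increment of inventory is $K^+$ (for $P^+$) and $-K^-$ (for $P^-$, i.e.\ a reward $K^-$), while the running cost $C$ enters through its derivative $C'$. I would first prove part $(ii)$, since convexity of $x \mapsto V(x,\pi)$ is cheap: for fixed $\pi$, given controls $P^1, P^2$ admissible for initial points $x_1, x_2$, the convex combination $\lambda P^1 + (1-\lambda)P^2$ is admissible for $\lambda x_1 + (1-\lambda) x_2$, the controlled state is the corresponding convex combination of states (by linearity of $P \mapsto X^P$ in \eqref{XPi}), and convexity of $C$ from Assumption~\ref{ass:C}.$(iii)$ plus linearity of the control costs yields $V(\lambda x_1 + (1-\lambda)x_2, \pi) \le \lambda V(x_1,\pi) + (1-\lambda)V(x_2,\pi)$. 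Convexity of $V$ in $x$ immediately gives that $V_x$ (which exists a.e.\ and, once $(i)$ is shown, everywhere) is nondecreasing in $x$; combined with $(i)$ this proves $(iii)$'s analogue for $x$, i.e.\ the monotonicity claim in $(ii)$.

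For part $(i)$, the heart of the matter, I would show the two inequalities $V_x^-(x,\pi) \ge v(x,\pi)$ and $V_x^+(x,\pi) \le v(x,\pi)$ (one-sided derivatives, which exist by convexity), forcing differentiability and the identification $V_x = v$. For the upper bound on $V_x^+$: take the difference quotient $\frac{1}{\varepsilon}\big(V(x+\varepsilon,\pi) - V(x,\pi)\big)$; given a near-optimal control $P$ for starting point $x$, one perturbs it to act from $x+\varepsilon$, and the difference in the two cost functionals is controlled by $\frac{1}{\varepsilon}\E\big[\int_0^\infty e^{-\rho t}\big(C(X^P_t + \varepsilon) - C(X^P_t)\big)\d t\big]$ plus correction terms from the control costs; letting $\varepsilon \downarrow 0$ and using the Lipschitz-type bound Assumption~\ref{ass:C}.$(ii)$ for dominated convergence, the leading term tends to $\E\big[\int_0^\infty e^{-\rho t} C'(X^P_t)\d t\big]$. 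The $\pm K^\pm$ terms in the Dynkin game payoff then emerge by recognizing that the optimal timing of an extra upward push corresponds to a stopping time $\tau$ (paying $K^+$) and a downward push to $\sigma$ (earning $K^-$), and that the running integral must be truncated at $\tau \wedge \sigma$; making the stopping times optimal on the two sides produces precisely the $\inf_\sigma \sup_\tau$ structure of \eqref{optstop}. The reverse inequality is obtained symmetrically, perturbing near-optimal controls for $x$ to serve $x-\varepsilon$ and using the game's saddle-point (or $\varepsilon$-saddle) strategies. I would lean on standard results of this type (e.g.\ the control--stopping correspondence in \cite{Federico2014, KW}) to keep the argument streamlined.

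For parts $(iii)$ and $(iv)$, I would argue more directly on the Dynkin game \eqref{optstop} itself, rather than through $V$. Monotonicity in $\pi$: by a coupling/comparison argument on \eqref{X0Pi}, a larger initial belief $\pi$ yields a larger drift estimate $\mu_1 \Pi_t + \mu_0(1-\Pi_t)$ pathwise (using $\mu_1 > \mu_0$ and that $\pi \mapsto \Pi_t$ is monotone for the SDE with the same driving $W$), hence a larger $X^0_t$; since $C'$ is nondecreasing by convexity (Assumption~\ref{ass:C}.$(iii)$), the running integrand $C'(X^0_t)$ increases, while the obstacle terms $-K^+, +K^-$ are unaffected, and monotonicity of $\inf\sup$ over stopping times of a pointwise-larger payoff gives $(iii)$. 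For continuity $(iv)$: the ingredients are (a) continuity in $x$, which follows from $(i)$ together with convexity of $V$ in $x$ (a finite convex function is continuous, and its derivative is continuous wherever it exists on both sides --- actually one needs a small additional argument that $V_x$ is continuous, which can be extracted from convexity plus the integral representation, or one shows directly that $v$ is continuous in $x$ from the payoff's Lipschitz dependence on $x$ via Assumption~\ref{ass:C}.$(ii)$ and Gr\"onwall-type estimates on $X^0$); and (b) continuity in $\pi$, via stability of the SDE \eqref{X0Pi} in the initial condition $\pi$ and the growth/Lipschitz bounds on $C'$, again passing continuity through the $\inf\sup$ using that uniform $\varepsilon$-optimal strategies transfer across nearby initial conditions.

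\textbf{The main obstacle} I anticipate is the rigorous justification of part $(i)$: differentiating a singular stochastic control problem in the initial state and simultaneously producing the precise $\pm K^\pm$ boundary-payoff structure of the associated Dynkin game requires care with (i) the admissible perturbations of near-optimal controls near $x$ (ensuring they stay $\F^S$-adapted and of bounded variation), (ii) uniform integrability to interchange limit and expectation --- here the polynomial growth $p \ge 2$ in Assumption~\ref{ass:C}.$(i)$ and the Lipschitz estimate $(ii)$ are essential to dominate the difference quotients of $C$ --- and (iii) the two-sided matching that upgrades $V_x^- \ge v \ge V_x^+$ (from convexity $V_x^- \le V_x^+$) into an equality. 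Once $(i)$ is in place, parts $(ii)$--$(iv)$ follow with comparatively routine comparison and stability arguments.
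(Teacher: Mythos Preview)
Your plan for parts $(i)$--$(iii)$ matches the paper's proof essentially line by line: convexity of $V(\cdot,\pi)$ from the linear structure $(x,P)\mapsto X^{P}$ and convexity of $C$; the identification $v=V_x$ via the control--Dynkin game correspondence of \cite{KW} (the paper simply invokes \cite[Theorem~3.2]{KW}, while you sketch the underlying sandwich argument and then cite the same source); and monotonicity of $v$ in $\pi$ from the comparison theorem for $\Pi^{\pi}$ together with $C'$ nondecreasing.

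The only genuine difference is in part $(iv)$. You propose to prove separate continuity in $x$ (via convexity and differentiability of $V(\cdot,\pi)$) and in $\pi$ (via SDE stability and $\varepsilon$-optimal strategies), and then implicitly upgrade to joint continuity. This works, but you should make explicit that monotonicity of $v$ in both variables (from $(ii)$ and $(iii)$) together with separate continuity yields joint continuity via the sandwich $v(x-\varepsilon,\pi-\varepsilon)\le v(x_n,\pi_n)\le v(x+\varepsilon,\pi+\varepsilon)$. The paper instead handles joint continuity in one stroke by exploiting the \emph{existence of exact saddle points} (guaranteed by \cite[Theorem~3.1]{KW} and Proposition~\ref{prop:existence}): picking the saddle $(\tau^\star,\sigma^\star)$ for $(x,\pi)$ and $(\tau^\star_n,\sigma^\star_n)$ for $(x_n,\pi_n)$, the mixed pairs $(\tau^\star,\sigma^\star_n)$ and $(\tau^\star_n,\sigma^\star)$ give the clean bound
\[
|v(x,\pi)-v(x_n,\pi_n)|\le \E\Big[\int_0^\infty e^{-\rho t}\big|C'(X_t^{0;(x,\pi)})-C'(X_t^{0;(x_n,\pi_n)})\big|\,\d t\Big],
\]
after which dominated convergence (via Assumption~\ref{ass:C}.$(i)$--$(ii)$) finishes. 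The paper's route is shorter and avoids the separate-to-joint step; yours is more elementary in that it does not require a saddle point to exist.
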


\begin{proof}
In this proof, whenever we need to stress the dependence of the state process on its starting point, we denote by $(X^{0;(x',\pi')}, \Pi^{\pi'})$ the unique strong solution to \eqref{X0Pi} starting at $(x',\pi')\in \mathcal{O}$ at time zero. 
We prove separately the four parts.

{\it Proof of (i).} Thanks to Proposition \ref{prop:existence}, it suffices to apply \cite[Theorem 3.2]{KW} upon setting $G\equiv 0$, $\gamma_t:=e^{-\rho t} K^+$, and $\nu_t:=e^{-\rho t} K^-$, for $t \geq 0$, we get
$$H(\omega,t,x):= e^{-\rho t} 
C\Big(x + \eta W_t(\omega) + \int_0^t \big(\mu_0 + (\mu_1 - \mu_0) \Pi_s(\omega) \big) \d s\Big), \, (\omega,t,x) \in \Omega \times \R_+ \times \R,$$
and noticing that the proof in \cite{KW} can be easily adapted to our infinite-time horizon discounted setting with right-continuous controls (see also \cite[Lemma A.1, Proposition 3.4]{Federico2014} for a proof in a related setting).

{\it Proof of (ii).} Denote by $(X^{P;(x,\pi)}, \Pi^{\pi})$ the unique strong solution to \eqref{XPi} when $(X^{P}_{0^-},\Pi_0)=(x,\pi)$. The convexity of $V(x,\pi)$ with respect to $x$, can be easily shown by exploiting the convexity of $C(x)$ and the linear structure of $(x,P)\mapsto X^{P;(x,\pi)}$, for any $P\in \mathcal{A}$ and $(x,\pi) \in \mathcal{O}$. The nondecreasing property of $v(\cdot,\pi)$ then follows from the fact that $v = V_x$ from part $(i)$.

{\it Proof of (iii).} Notice that 
$X^0_t= x + \eta W_t + \int_0^t \big(\mu_1 \Pi_s + \mu_0 (1 - \Pi_s)\big) \d s$, $t \geq 0$,
and that $\pi \mapsto \Pi^{\pi}$ is nondecreasing due to standard comparison theorems for strong solutions to one-dimensional stochastic differential equations \cite[Chapter 5.2]{KS}. Then, the claim follows from \eqref{optstop} and Assumption \ref{ass:C} according to which $x \mapsto C'(x)$ is nondecreasing.

{\it Proof of (iv).} By \cite[Theorem 3.1]{KW} and Proposition \ref{prop:existence} we know that, for any $(x,\pi) \in \mathcal{O}$, \eqref{optstop} admits a saddle point. 
Take $(x_n,\pi_n)\to (x,\pi)$ as $n\uparrow \infty$, and let $(\tau^{\star},\sigma^{\star})$ and $(\tau_n^{\star},\sigma_n^{\star})$ realise the saddle-points for $(x,\pi)$ and $(x_n,\pi_n)$, respectively.
Then, we have
\begin{align}
\label{eq:diffvs}
v(x,\pi)-v(x_n,\pi_n) &\leq \E\bigg[\int_0^{\tau^{\star}\wedge \sigma^{\star}_n} e^{-\rho t} \left(C'(X_t^{0;(x,\pi)})-C'(X_t^{0;(x_n,\pi_n)})\right)\d t\bigg]\nonumber \\
&\leq \E\bigg[\int_0^{\infty} e^{-\rho t} \left|C'(X_t^{0;(x,\pi)})-C'(X_t^{0;(x_n,\pi_n)})\right|\d t\bigg].
\end{align}
Without loss of generality, we can take $(x_n,\pi_n) \subset (x-\varepsilon, x + \varepsilon) \times (\pi-\varepsilon, \pi + \varepsilon)$, for a suitable $\varepsilon>0$ and for $n$ sufficiently large. Then, by Assumption \ref{ass:C}.$(ii)$ and standard estimates using Assumption \ref{ass:C}.$(i)$, the expression of $X^0$ and the fact that $\Pi$ is bounded in $[0,1]$, we can invoke the dominated convergence theorem and obtain
$\limsup_{n\to\infty} (v(x,\pi)-v(x_n,\pi_n))\leq 0.$
In order to evaluate the difference $v(x_n,\pi_n)-v(x,\pi)$, we now employ the couple of stopping times $(\tau_n^{\star},\sigma^{\star})$ and employ the same rationale leading to \eqref{eq:diffvs} so to obtain
$\limsup_{n\to\infty} (v(x_n,\pi_n)-v(x,\pi))\leq 0.$
Combining the last two inequalities, we obtain the desired continuity claim. 
\end{proof}
 
In the rest of this section, we focus on the study of the optimal stopping game $v$ presented in \eqref{optstop}, due to its connection to our stochastic control problem (cf.\ Proposition \ref{contv}). To that end, we define the so-called continuation (waiting) region 
\begin{equation}
\label{C1}
\mathcal{C}_1:=\big\{(x,\pi)\in\mathcal{O}: \ -K^+< {v}(x,\pi)<K^-\big\},
\end{equation}
and the stopping region ${\mathcal{S}_1} := {\mathcal{S}_1}^+ \cup {\mathcal{S}_1}^-$, whose components are given by 
\begin{equation}
\label{S1}
{\mathcal{S}_1}^+:=\big\{(x,\pi)\in\mathcal{O}: \ v(x,\pi)\leq - K^+\big\}, \ \ \ 
{\mathcal{S}_1}^-:=\big\{(x,\pi)\in\mathcal{O}: \ v(x,\pi)\geq K^-\big\}.
\end{equation}

In light of the continuity of ${v}$ in Proposition \ref{contv}.$(iv)$, we conclude that the continuation region $\mathcal{C}_1$ is an open set, while the two components of the stopping regions ${\mathcal{S}_1}^{\pm}$ are both closed sets.
We can therefore define the free boundaries
\begin{align} 
\label{a+-}
a_+(\pi) &:=\sup\big\{x\in\R: v(x,\pi)\leq -K^+\big\},  
a_-(\pi) :=\inf\big\{x\in\R: v(x,\pi)\geq K^-\big\}.
\end{align}
Here, and throughout the rest of this paper, we use the convention $\sup\emptyset = - \infty$ and $\inf \emptyset = + \infty$.
Then, by using the fact that $v$ is nondecreasing with respect to $x$ (see Proposition \ref{contv}.$(ii)$), we can obtain the structure of the continuation and stopping regions, which take the form
\begin{align}
\label{C1a}
&\mathcal{C}_1 =
\big\{(x,\pi)\in\mathcal{O}: \ a_+(\pi)<x<a_-(\pi)\big\}, \\
\label{S1a}
\mathcal{S}^+_1=\big\{(x,\pi)\in\;&\mathcal{O}: \ x \leq a_+(\pi)\big\} 
\quad \text{and} \quad    
\mathcal{S}^-_1=\big\{(x,\pi)\in\mathcal{O}: \ x \geq a_-(\pi)\big\}.
\end{align}
Clearly, the continuity of $v$ further implies that the free boundaries $a_\pm$ are strictly separated, namely
$a_+(\pi) < a_-(\pi) \quad \text{for all } \pi \in (0,1).$ 

We now prove some preliminary properties of the free boundaries $\pi \mapsto a_\pm(\pi)$. 

\begin{proposition}
\label{prop:aa}
The free boundaries $a_\pm$ defined in \eqref{a+-} satisfy: 
\begin{enumerate}
\item[(i)] $a_\pm(\cdot)$ are nonincreasing on $(0,1)$.
\item[(ii)] $a_+(\cdot)$ is left-continuous and $a_-(\cdot)$ is right-continuous on $(0,1)$.
\item[(iii)] There exist constants $x_\pm^* \in \R$, such that 
$x_+^*\leq a_+(\pi) < a_-(\pi)\leq x_-^*$, for all $\pi\in(0,1)$.
Moreover, letting $(C')^{-1}$ be the generalised inverse of $C'$, we have $a_+(\pi) \leq (C')^{-1}(-\rho K^+)$ and $a_-(\pi) \geq (C')^{-1}(\rho K^-)$ for all $\pi \in (0,1)$. 
\end{enumerate}
\end{proposition}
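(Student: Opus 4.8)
\textbf{Proof proposal for Proposition \ref{prop:aa}.}

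The plan is to establish the three parts in order, using the characterisation of $a_\pm$ via the sign of $v \mp (\mp K^\mp)$ together with the monotonicity and continuity properties of $v$ from Proposition \ref{contv}, plus a comparison argument for the stopping game \eqref{optstop}.

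\emph{Part (i): monotonicity in $\pi$.} By Proposition \ref{contv}.$(iii)$, $\pi \mapsto v(x,\pi)$ is nondecreasing. Fix $\pi < \pi'$ and suppose $(x,\pi) \in \mathcal{S}_1^+$, i.e.\ $v(x,\pi) \leq -K^+$. We want to deduce $a_+(\pi') \le a_+(\pi)$; the natural route is: since $v(\cdot,\pi)$ is nondecreasing in $x$, $(-\infty, a_+(\pi)] \times \{\pi\} \subset \mathcal{S}_1^+$, and I need the slice of $\mathcal{S}_1^+$ at $\pi'$ to be contained in that at $\pi$. This does \emph{not} follow directly from $v(x,\pi)\le v(x,\pi')$ — the inequality goes the wrong way for $\mathcal{S}_1^+$. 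Instead I would argue via the control problem: from Proposition \ref{contv}.$(i)$, $v = V_x$ and $V$ is convex in $x$; the sign of $v+K^+$ detects whether it is optimal to exercise upward control. The cleaner argument is probabilistic directly on \eqref{optstop}: if $(x,\pi')\in\mathcal S_1^+$ then $\inf_\sigma$ is attained at $\sigma\equiv\infty$ effectively in the relevant region and one compares the payoff started from $(x,\pi)$ versus $(x,\pi')$ using $\Pi^{\pi}\le\Pi^{\pi'}$ pathwise (comparison theorem, as in the proof of Proposition \ref{contv}.$(iii)$) and $C'$ nondecreasing, to show the infimum over $\sigma$ stays $\le -K^+$. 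For $a_-$ the symmetric statement uses that a \emph{larger} $\pi$ pushes $X^0$ up, hence into $\mathcal S_1^-$; so $\mathcal S_1^-$ slices grow as $\pi$ decreases — wait, this gives $a_-$ nonincreasing as well, consistent with the claim. I would write both cases carefully, since the direction of the comparison is the subtle point.

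\emph{Part (ii): one-sided continuity.} Both $a_\pm$ are monotone by (i), hence have left and right limits everywhere; the task is to rule out jumps on the appropriate side. For $a_+$: fix $\pi_0$ and take $\pi_n \uparrow \pi_0$. Then $a_+(\pi_n) \downarrow$... no — by (i), $a_+$ nonincreasing, so $\pi_n\uparrow\pi_0$ gives $a_+(\pi_n)\downarrow a_+(\pi_0^-)\ge a_+(\pi_0)$. For each $n$, $(a_+(\pi_n),\pi_n)\in\mathcal S_1^+$ (closedness of $\mathcal S_1^+$ and right-continuity of $v(\cdot,\pi_n)$... actually $a_+(\pi_n)$ is the sup of the slice, and since the slice is closed and of the form $(-\infty,a_+(\pi_n)]$, the point lies in $\mathcal S_1^+$). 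Passing to the limit using continuity of $v$ (Proposition \ref{contv}.$(iv)$), $(a_+(\pi_0^-),\pi_0)\in\mathcal S_1^+$, hence $a_+(\pi_0^-)\le a_+(\pi_0)$, giving equality. The symmetric argument with $\pi_n\downarrow\pi_0$ handles right-continuity of $a_-$. The main thing to get right is that the extremal point of each closed half-line slice is actually \emph{in} the stopping region, which follows from closedness of $\mathcal S_1^\pm$.

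\emph{Part (iii): global bounds.} For the explicit one-sided bounds I would exploit that $v = V_x$ and the variational inequality, or more elementarily: evaluate \eqref{optstop} at the specific choice $\tau \equiv 0$ (resp.\ $\sigma\equiv 0$) to bound $v$ below (resp.\ above), but the sharp statement $a_+(\pi)\le (C')^{-1}(-\rho K^+)$ is better obtained by a verification-type observation — if $x$ is such that $C'(x) > -\rho K^+$, then starting the game there and using that $C'(X^0_t)$ stays above $-\rho K^+$ with high probability for small $t$ (by continuity of paths and $C'$ nondecreasing), the sup-player can guarantee a value $> -K^+$ by choosing $\tau$ appropriately, so $(x,\pi)\notin\mathcal S_1^+$. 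Concretely: for $\sigma$ fixed, $\sup_\tau \E[\cdots] \ge \E[\int_0^{\tau\wedge\sigma}e^{-\rho t}C'(X^0_t)\d t - K^+e^{-\rho\tau}\mathds 1_{\tau<\sigma}]$ and choosing $\tau=\infty$ gives $\ge \E[\int_0^\sigma e^{-\rho t}C'(X^0_t)\d t] \ge -K^+ \E[\int_0^\sigma \rho e^{-\rho t}\d t] > -K^+$ whenever $C'(X^0_t)\ge -\rho K^+$ fails to be tight — I need $C'(X^0_t) \ge C'(x) > -\rho K^+$... but $X^0$ can go below $x$. The fix is to take $\sigma=\infty$ too and then optimise, or localise: use the optimal $\sigma^\star$ and note that on $\{\sigma^\star>0\}$ the integrand contributes, handling the boundary case $x$ at which $C'(x)=-\rho K^+$ by a direct Itô/Dynkin computation showing $v(x,\pi)>-K^+$ strictly. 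For the non-explicit constants $x_\pm^*$, I combine the explicit bounds with the already-known ordering $a_+<a_-$: take $x_+^* := (C')^{-1}(-\rho K^+)$... no, that's an upper bound for $a_+$; a lower bound $x_+^*$ for $a_+$ comes from coercivity of $C$ (Assumption \ref{ass:C}.$(iv)$) — for $x$ very negative, $C'(x)$ is very negative, so the sup-player wants to wait and collect the (now strongly negative) running reward... actually that pushes toward $v(x,\pi)\le -K^+$, confirming $x\in\mathcal S_1^+$ for $x\ll 0$, which is the wrong direction for a lower bound. I suspect the intended lower bound $x_+^*$ is simply $-\infty$... but the statement asserts it is a real constant. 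Re-examining: a lower bound on $a_+$ should come from the \emph{downward} control cost $K^-$ and convexity — if $a_+$ were too negative, the waiting region would be disproportionately placed and the smooth-fit/convexity of $V$ would be violated; alternatively one uses that $v(x,\pi) < K^-$ can fail only for $x \ge a_-(\pi) \ge (C')^{-1}(\rho K^-)$, and a symmetric compactness/monotonicity argument in $\pi$ (the boundaries are monotone on the bounded interval $(0,1)$ but could still escape to $-\infty$ as $\pi\to 1$). The genuine content, and the \textbf{main obstacle}, is ruling out that $a_+(\pi)\to-\infty$ as $\pi\uparrow 1$ (and $a_-(\pi)\to+\infty$ as $\pi\downarrow 0$): I would handle this by comparing with the two full-information problems ($\pi\in\{0,1\}$, constant boundaries, cf.\ the Remark) and a monotonicity/continuity argument showing the partial-information boundaries are trapped between the two constant full-information pairs. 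That uniform trapping supplies the constants $x_\pm^*$, and is the step requiring the most care.
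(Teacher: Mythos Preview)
Your Part~(ii) is correct and matches the paper: monotonicity plus closedness of $\mathcal{S}_1^{\pm}$ gives the one-sided continuity, exactly as you outline.

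In Part~(i), however, you have talked yourself out of the correct, immediate argument. You write that the containment of the $\pi'$-slice of $\mathcal{S}_1^+$ in the $\pi$-slice ``does \emph{not} follow directly from $v(x,\pi)\le v(x,\pi')$''. It does: if $x$ lies in the $\pi'$-slice then $v(x,\pi')\le -K^+$, and monotonicity gives $v(x,\pi)\le v(x,\pi')\le -K^+$, so $x$ lies in the $\pi$-slice. Hence $a_+(\pi')\le a_+(\pi)$. Your confusion comes from starting the argument at a point $(x,\pi)\in\mathcal{S}_1^+$ rather than at $(x,\pi')\in\mathcal{S}_1^+$. The paper's proof of (i) is a single sentence pointing to the definitions of $a_\pm$ and the $\pi$-monotonicity of $v$; no probabilistic comparison on the game is needed here.

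For Part~(iii), your eventual plan for the constants $x_\pm^*$ --- compare with the two full-information problems and trap the partial-information boundaries between the resulting constant thresholds --- is exactly the paper's route: it bounds $X^0$ pathwise between $\underline{X}^0_t=x+\mu_0 t+\eta W_t$ and $\overline{X}^0_t=x+\mu_1 t+\eta W_t$, defines the corresponding one-dimensional Dynkin games $v_0,v_1$, and obtains $v_0(x)\le v(x,\pi)\le v_1(x)$, from which the uniform bounds follow. For the explicit bounds $a_+(\pi)\le (C')^{-1}(-\rho K^+)$ and $a_-(\pi)\ge (C')^{-1}(\rho K^-)$, your attempted arguments via $\tau=\infty$ run into the difficulty you noticed (that $X^0$ can go below $x$). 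The paper sidesteps this cleanly by an integration by parts that rewrites the game payoff so that the running cost becomes $C'(X^0_t)+\rho K^+$ (resp.\ $C'(X^0_t)-\rho K^-$) on the relevant event; it is then immediate that the sup-player never stops where $C'(x)+\rho K^+>0$ and the inf-player never stops where $C'(x)-\rho K^-<0$. This is the missing device that would replace your localisation/It\^o sketch.
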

\begin{proof}
{\it Proof of (i).} 
This is a consequence of the definitions of $a_\pm(\cdot)$ in \eqref{a+-} and the fact that $v(x,\cdot)$ is nondecreasing for any $x \in \R$; cf.\ Proposition \ref{contv}.$(iii)$.

{\it Proof of (ii).} 
This follows from part $(i)$ above and the closedness of the sets ${\mathcal{S}_1}^{\pm}$.

{\it Proof of (iii).} 
The fact that $a_+(\pi) \leq (C')^{-1}(-\rho K^+)$ and $a_-(\pi) \geq (C')^{-1}(\rho K^-)$ follows by noticing that $\mathcal{S}^+_1 \subseteq \{x\in \R:\, x \leq (C')^{-1}(-\rho K^+)\}$ and $\mathcal{S}^-_1 \subseteq \{x\in \R:\, x \geq (C')^{-1}(\rho K^-)\}$. 
This can be seen by observing that an integration by parts yields
\begin{align*}
v(x,\pi)= \inf_{\sigma}\sup_{\tau}\E_{(x,\pi)}\bigg[&\mathds{1}_{\{\tau<\sigma\}} \bigg(\int_0^{\tau} e^{-\rho t}\Big(C'(X_t^0) + \rho K^+\Big)\d t-K^+\bigg) \nonumber \\
&+ \mathds{1}_{\{\tau>\sigma\}}\bigg(\int_0^{\sigma} e^{-\rho t}\Big(C'(X_t^0) - \rho K^-\Big)\d t + K^-\bigg)\bigg];
\end{align*}
hence, independently of the choice of the other player, the sup-player will never stop in the region $\{(x,\pi) \in \mathcal{O}:\, C'(x) + \rho K^+ >0\}$ and the inf-player does not stop in the region $\{(x,\pi) \in \mathcal{O}:\, C'(x) - \rho K^- < 0\}$.

In order to show the other bounds, we proceed as follows. Since $\mu_1>\mu_0$ and $\Pi_t \in(0,1)$, we have $\P_{(x,\pi)}$-a.s., for any $t\geq0$, that 
$X_t^0 \geq x+\eta W_t+\mu_0 t=:\underline{X}_t^0$
and 
$X_t^0 \leq x+\eta W_t+\mu_1 t=:\overline{X}_t^0.$
Therefore, the latter two estimates yield that $\underline{X}_t^0 \leq X_t^0 \leq \overline{X}_t^0$ for all $t \geq 0$. 
Combining these inequalities with the fact that $C'(\cdot)$ is nondecreasing due to Assumption \ref{ass:C} and the definition \eqref{optstop} of the value function $v(x,\pi)$, we conclude that
\begin{equation}
\label{boundsv}
v_0(x) \leq v(x,\pi) \leq v_1(x), \quad \text{for all} \quad (x,\pi)\in\mathcal{O},
\end{equation}
where we have introduced the one-dimensional optimal stopping games 
\begin{align*}
v_0(x)&:=\inf_{\sigma\in\mathcal{T}}\sup_{\tau\in\mathcal{T}} \E\bigg[\int_0^{\tau\wedge\sigma} e^{-\rho t} C'(\underline{X}_t^0)\d t -K^+e^{-\rho \tau} \mathbf{1}_{\{\tau<\sigma\}}+K^-e^{-\rho \sigma} \mathbf{1}_{\{\tau>\sigma\}} \bigg] \\
v_1(x)&:=\inf_{\sigma\in\mathcal{T}}\sup_{\tau\in\mathcal{T}} \E\bigg[\int_0^{\tau\wedge\sigma} e^{-\rho t} C'(\overline{X}_t^0)\d t -K^+e^{-\rho \tau} \mathbf{1}_{\{\tau<\sigma\}}+K^-e^{-\rho \sigma} \mathbf{1}_{\{\tau>\sigma\}} \bigg].
\end{align*}
Because both $v_0(\cdot)$ and $v_1(\cdot)$ are nondecreasing on $\R$, standard techniques allow to show that due to Assumption \ref{ass:C}.$(iv)$ there exists finite $x_-^{\star}, x_+^{\star}$ such that
$\{x\in\R: \ x\geq x_-^{\star}\}=\{x\in\R: \ v_0(x) \geq K^-\}$ and 
$\{x\in\R: \ x\leq x_+^{\star}\}=\{x\in\R: \ v_1(x)\leq -K^+\}$.
Hence, combining the latter two regions together with the inequalities in \eqref{boundsv}, we eventually get that
\begin{align} \label{s-}
\begin{split}
&\{x\in\R: \ x\geq x_-^{\star}\} \subseteq\{(x,\pi)\in\mathcal{O}: \ v(x,\pi)\geq K^-\}=\mathcal{S}_1^- , \\
&\{x\in\R: \ x\leq x_+^{\star}\} \subseteq\{(x,\pi)\in\mathcal{O}: \ v(x,\pi)\leq -K^+\}=\mathcal{S}_1^+. 
\end{split}
\end{align}
Hence, $\mathcal{S}_1^\pm\neq \emptyset$ and the claim follows from \eqref{s-}. 
\end{proof}

\section{A Decoupling Change of Measure}
\label{sec:2transf}

In order to provide further results about the optimal control problem \eqref{functional2} and the associated Dynkin game \eqref{optstop}, it is convenient to decouple the dynamics of the controlled inventory process  $X^P$ and the belief process $\Pi$. This can be achieved via a transformation of state space and a change of measure, as we explain in the following subsections.

\subsection{Transformation of process $\Pi$ to $\Phi$} \label{TransPhi}
We first recall from \eqref{XPi} (see also \eqref{X0Pi}), that for any prior belief $\Pi_0 = \pi \in (0,1)$, we have $\Pi_t\in(0,1)$ for all $t\in(0,\infty)$. Hence, we define the process 
$\Phi_t:={\Pi_t}/({1-\Pi_t})$, $t\geq 0,$
whose dynamics are given via It\^o's formula by  
\begin{equation}
\label{dynPhi0}
\d \Phi_t 
=\gamma\Phi_t (\gamma \Pi_t\d t+\d W_t), 
\quad \Phi_0=\varphi:=\tfrac{\pi}{1-\pi}.
\end{equation}
Note that, the process $\Phi$ is known as the ``likelihood ratio process'' in the literature of filtering theory (see, e.g. \cite{JP17}).

\subsection{Change of measure from $\P$ to $\Q_T$, for some fixed $T>0$} \label{QT}
We begin by defining the exponential martingale
$\zeta_T:= \exp \{-\gamma\int_0^T\Pi_s\d W_s -\frac{1}{2}\int_0^T\gamma^2\Pi_s^2\d s \},$
and the measure ${\Q}_T\sim \P$ on $(\Omega,\mathcal{F}_T)$ by
${\d\Q_T}/{\d\P}=\zeta_T.$

Then, the process 
${W}^*_t:=W_t+\gamma \int_0^t \Pi_s\d s$, $t\in[0,T]$, 
is a Brownian motion in $[0,T]$ under $\Q_T$, and the dynamics of $\Phi$ in \eqref{dynPhi0} simplifies to 
$\d \Phi_t=\gamma\Phi_t\d{W}^*_t$, $t\in(0,T]$, $\Phi_0=\varphi$,
hence $\Phi$ is an exponential martingale under $\Q_T$.
Consequently, applying the same change of measure to the process $X^P$ from \eqref{XPi}, 
we obtain 
$\d X^P_t=\mu_0\d t + \eta \d {W}^*_t+\d P_t^+-\d P_t^-$, $t\in[0,T]$, $X^P_{0-}=x$.

In order to change the measure also in the cost criterion of our value function in \eqref{functional2}, we further define the process
$Z_t:=({1+\Phi_t})/({1+\varphi})$, $t\in[0,T],$
which can be verified via It\^o's formula to satisfy 
$Z_t= 1/\zeta_t$, {for every } $t\in[0,T]$. 
Hence, denoting by $\E^{\Q_T}$ the expectation under $\Q_T$, we have that
\begin{align} \label{E=EQ}
&\E\bigg[\int_0^T e^{-\rho t} \left(C(X^P_t)\d t+ K^+\d P_t^+ + K^- \d P_t^-\right)\bigg] \notag\\&
=\frac{1}{1+\varphi}\,\E^{\Q_T}\bigg[(1+\Phi_T)\int_0^T e^{-\rho t} \Big(C(X^P_t)\d t+ K^+\d P_t^+ + K^- \d P_t^-\Big)\bigg].
\end{align}
Since the process $(1 + \Phi_t)_{t\geq0}$ defines a nonnegative martingale under $\Q_T$, by an application of It\^o's formula we can write
\begin{align*}
&\E^{\Q_T}\bigg[ (1+\Phi_T)\int_0^Te^{-\rho t} C(X^P_t)\d t\bigg]=\E^{\Q_T}\bigg[\int_0^T e^{-\rho t}(1+\Phi_t)C(X^P_t)\d t\bigg], \\
&\E^{\Q_T}\bigg[(1+\Phi_T)\int_0^T e^{-\rho t}\d P_t^{\pm} \bigg]=\E^{\Q_T}\bigg[\int_0^T e^{-\rho t} (1+\Phi_t)\d P^{\pm}_t\bigg].
\end{align*}
Hence, combining together the above expressions of the expectations $\E^{\Q_T}$ we get that \eqref{E=EQ} can be expressed in the form of 
\begin{align}
\label{eq:transfJ}
& \E\bigg[\int_0^T e^{-\rho t} \Big(C(X^P_t)\d t+ K^+\d P_t^+ + K^- \d P_t^-\Big)\bigg] \nonumber \\& 
=\frac{1}{1+\varphi}\,\E^{\Q_T}\bigg[\int_0^T e^{-\rho t}(1+\Phi_t)\Big(C(X^P_t)\d t+ K^+\d P_t^+ +K^- \d P_t^-\Big)\bigg].
\end{align}

\subsection{Passing to the limit as $T\to\infty$ and to the new measure $\Q$} \label{Q}
We firstly notice that passing to the limit as $T\to\infty$ cannot be performed directly to the latter expression in \eqref{eq:transfJ}, since the measure $\Q_T$ changes with $T$. 
{Nevertheless, noticing that the right-hand side of \eqref{eq:transfJ} only depends on the law of the processes involved we can introduce a new auxiliary problem. 

To that end, first of all note that any $P\in\mathcal{A}$ has paths that are right-continuous and (locally) of bounded variation $\Q_T$-a.s. and it is $\mathbb{F}^S$-adapted since $\mathbb{F}^S = \mathbb{F}^W = \mathbb{F}^{W^*}$.
Then, define a new complete probability space $(\overline{\Omega},\overline{\mathcal{F}}, \overline{\Q})$ supporting a Brownian motion $(\overline{W}_t)_{t\geq 0}$, let $(\overline{\mathcal{F}}^o_t)_{t\geq0}$ be the raw filtration generated by $\overline{W}$, and denote by $\overline{\mathbb{F}}:=(\overline{\mathcal{F}}_t)_{t\geq0}$ its augmentation with the $\overline{\Q}$-null sets.
Hence, introducing
\begin{eqnarray*}
\overline{\mathcal{A}}&:=&\big\{\overline{P}:\overline{\Omega}\times\R^+\to\R \ \mbox{such that} \ t\mapsto \overline{P}_t \ \mbox{is right-continuous, (locally) of bounded } \\ \nonumber 
&& \mbox{variation and}\,\, \overline{P} \ \mbox{is}  \ \overline{\F}-\mbox{adapted}\big\},
\end{eqnarray*}
by \cite[Lemma 5.5]{DeAMi2021} (adjusted to our setting with right-continuous controls), given $P\in\mathcal{A}$ there exists $\overline{P} \in \overline{\mathcal{A}}$ that is $\overline{\mathcal{F}}^o_{t+}-$predictable and such that $\text{Law}_{\Q_T}({W}^*,P) = \text{Law}_{\overline{\Q}}(\overline{W},\overline{P}).$ This in turn leads to (cf.\ \cite[Corollary 5.6]{DeAMi2021}) 
\begin{equation}
\label{eq:eqivLaw}
\text{Law}_{\Q_T}({W}^*, X^P, \Phi, P) = \text{Law}_{\overline{\Q}}(\overline{W},\overline{X}^{\overline{P}}, \overline{\Phi}, \overline{P}),
\end{equation}
where $(\overline{X}^{\overline{P}}, \overline{\Phi})$ is the strong solution on $(\overline{\Omega},\overline{\mathcal{F}}, \overline{\mathbb{F}}, \overline{\Q})$ to the controlled stochastic differential equation 
$$
\begin{cases}
\d \overline{X}^{\overline{P}}_t = \mu_0\d t + \eta \d \overline{W}_t+\d \overline{P}_t^+-\d \overline{P}_t^-, \quad &\overline{X}^{\overline{P}}_{0-}=x,\\
\d \overline{\Phi}_t \;\,= \gamma\overline{\Phi}_t\d\overline{W}_t, \quad &\overline{\Phi}_0=\varphi:=\frac{\pi}{1-\pi},
\end{cases}
$$ 
with $\overline{P}^{\pm}$ denoting the nondecreasing processes providing the minimal decomposition of $\overline{P} \in \overline{\mathcal{A}}$ as $\overline{P}=\overline{P}^+-\overline{P}^-$.

Denoting now by $\overline{\E}$ the expectation on $(\overline{\Omega},\overline{\mathcal{F}})$ under $\overline{\Q}$, we have for every $T>0$, 
\begin{align*}
&\E^{\Q_T}\bigg[\int_0^T e^{-\rho t}(1+\Phi_t)\left(C(X^P_t)\d t+ K^+\d P_t^++K^- \d P_t^-\right)\bigg] \\&
= \overline{\E}\bigg[\int_0^T e^{-\rho t}(1+\overline{\Phi}_t)\Big(C(\overline{X}^{\overline{P}}_t)\d t+ K^+\d \overline{P}_t^+ + K^- \d \overline{P}_t^-\Big)\bigg], 
\end{align*}
due to \eqref{eq:eqivLaw}.
Therefore, combining the above equality with \eqref{eq:transfJ}, we eventually get 
\begin{align} \label{E=phiE}
&\E\bigg[\int_0^T e^{-\rho t} \Big(C(X^P_t)\d t+ K^+\d P_t^+ +K^- \d P_t^-\Big)\bigg] \nonumber \\& 
=\frac{1}{1+\varphi}\,\overline{\E}\bigg[\int_0^T e^{-\rho t}(1+\overline{\Phi}_t)\Big(C(\overline{X}^{\overline{P}}_t)\d t+ K^+\d \overline{P}_t^+ + K^- \d \overline{P}_t^-\Big)\bigg],
\end{align}

Thanks to \eqref{E=phiE}, 
we can now take limits as $T\to\infty$ and obtain, in view of the definitions \eqref{functional2} of the control value function and \eqref{dynPhi0} of the starting value $\varphi$, that
\begin{equation} \label{VVbar}
\begin{split}
&V(x,\pi) = (1-\pi)\overline{V}\big(x,\tfrac{\pi}{1-\pi}\big), 
\;\; \text{or equivalently} \;\; \overline{V}(x,\varphi)=(1+\varphi)V\big(x, \tfrac{\varphi}{1+\varphi}\big), 
\\
&\text{where} \;\;
\overline{V}\left(x,\varphi\right):=\inf_{\overline{P}\in\overline{\mathcal{A}}} \overline{\E}\bigg[\int_0^\infty \hspace{-3mm}e^{-\rho t}(1+\overline{\Phi}_t)\Big(C(\overline{X}^{\overline{P}}_t)\d t + K^+\d \overline{P}_t^+ + K^-\d \overline{P}_t^-\Big)\bigg].
\end{split}
\end{equation} 
Therefore, in order to obtain the value function $V(x,\pi)$ from  \eqref{functional2}, we could instead solve first the above problem to get $\overline{V}\left(x,\varphi\right)$ and then use the equality in \eqref{VVbar}.  
However, in order to simplify the notation, {\textbf{from now on}} in the study of $\overline{V}$ we will simply write $(\Omega, \mathcal{F}, \mathbb{F}, {\Q}, {\E}^{\Q}, {W}, {X}, {\Phi}, {P}, {\mathcal{A}})$ instead of $(\overline{\Omega}, \overline{\mathcal{F}}, \overline{\mathbb{F}}, \overline{\Q}, \overline{\E}, \overline{W}, \overline{X}, \overline{\Phi}, \overline{P}, \overline{\mathcal{A}})$.

\subsection{The optimal control problem with state-space process $(X^P,\Phi)$ under the new measure $\Q$} \label{VXPhi}
Summarising the results from Sections \ref{TransPhi}--\ref{Q}, we henceforth focus on the study of the following optimal control problem 
\begin{align}
\label{eq:Vbar}
\begin{split}
\overline{V}\left(x,\varphi\right) 
&:= \hspace{-1mm}\inf_{{P}\in {\mathcal{A}}} {\E}^{\Q}\bigg[\int_0^\infty \hspace{-3mm}e^{-\rho t}(1+{\Phi}_t)\Big(C({X}^{{P}}_t)\d t + K^+\d {P}_t^+ + K^-\d {P}_t^-\Big)\bigg] \\ 
&\hspace{-1mm}=: \inf_{{P}\in {\mathcal{A}}}\overline{\mathcal{J}}_{x,\varphi}(P).
\end{split}
\end{align}
under the dynamics
\begin{equation}
\label{eq:XPhiP}
\begin{cases}
\d X^P_t = \mu_0\d t + \eta \d W_t+\d P_t^+-\d P_t^-, \quad &X^{P}_{0-}=x  \in \R,\\
\d \Phi_t \;\,= \gamma\Phi_t\d W_t, \quad &\Phi_0=\varphi:=\frac{\pi}{1-\pi} \in (0,\infty),
\end{cases}
\end{equation}
for a standard Brownian motion $W$. In light of the equality in \eqref{VVbar}, this will lead to the original value function $V(x,\pi)$ from  \eqref{functional2}. 
In the remaining of Section \ref{sec:2transf}, we expand our study -- beyond the values of the control problems -- to the relationship between the free boundaries in the two formulations, since these boundaries will eventually define the optimal control strategy (see Section \ref{sec:Verif}).

\subsection{The optimal stopping game associated to \eqref{eq:Vbar}--$\eqref{eq:XPhiP}$ under the new measure $\Q$} \label{vXPhi}
\label{barVprop}

The next result is concerned with properties of the value function defined in \eqref{eq:Vbar} and its connection to an associated optimal stopping game. The first existence claim follows from Proposition \ref{prop:existence}, since existence of an optimal control is preserved under the change of measure performed in the previous section. The second claim can be proved by employing arguments similar to those used in the proof of Proposition \ref{contv} above. Hence, the proof is omitted for brevity.

\begin{proposition}
\label{prop:Vbar}
Consider the problem defined in \eqref{eq:Vbar}--$\eqref{eq:XPhiP}$.  
\begin{enumerate}
\item[(i)] There exists an optimal control $P^{\star}$ solving \eqref{eq:Vbar}. Moreover, $P^{\star}$ is unique (up to indistinguishability) if $C$ is strictly convex.
\item[(ii)] $x \mapsto \overline{V}(x,\varphi)$ is convex and differentiable, such that $\overline{V}_x(x,\varphi)=\overline{v}(x,\varphi)$ on $\R\times (0,\infty)$, for 
\begin{align}
\label{eq:barv}
\vspace{-3mm}\bar{v}(x,\varphi) :=\inf_{\sigma}\sup_{\tau} \E^{\Q}\bigg[ &\int_0^{\tau\wedge\sigma} e^{-\rho t}(1+\Phi_t)C'(X_t^0)\d t 
- K^+(1+\Phi_\tau)e^{-\rho \tau} \mathbf{1}_{\{\tau<\sigma\}} \notag\\
&+K^-(1+\Phi_\sigma)e^{-\rho \sigma} \mathbf{1}_{\{\tau>\sigma\}}\bigg],
\end{align}
over the set of $\F^W$-stopping times and state-space process given by 
\begin{equation}
\label{XoPhi}
\begin{cases}
\d {X}^0_t = \mu_0 \d t + \eta \d W_t, \quad &X^0_{0}=x \in \R,\\
\d \Phi_t \;\,= \gamma \Phi_t\d W_t, \quad &\Phi_0=\varphi:=\frac{\pi}{1-\pi} \in (0,\infty).
\end{cases}
\end{equation}
\end{enumerate}
\end{proposition}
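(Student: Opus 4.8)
The plan is to establish Proposition~\ref{prop:Vbar} by transferring the already-proven facts about the separated problem $V$ and the Dynkin game $v$ through the change-of-measure identities derived in Sections~\ref{TransPhi}--\ref{Q}. For part~(i), I would simply note that existence of an optimal control is a path-property/law-property of the admissible controls: by \eqref{eq:eqivLaw} and the relation \eqref{VVbar}, if $P^\star$ is optimal for the original problem \eqref{functional2} (which exists by Proposition~\ref{prop:existence}), then the corresponding $\overline{P}^\star \in \overline{\mathcal{A}}$ produced by \cite[Lemma 5.5]{DeAMi2021} is optimal for $\overline V$, since the two cost functionals agree up to the multiplicative constant $(1+\varphi)$. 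Uniqueness when $C$ is strictly convex follows by the same correspondence combined with the uniqueness part of Proposition~\ref{prop:existence}, using that the map between the two sets of controls respects indistinguishability of the induced state processes.

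For part~(ii), convexity of $x \mapsto \overline V(x,\varphi)$ follows exactly as in the proof of Proposition~\ref{contv}.(ii): the map $(x,P)\mapsto X^P$ in \eqref{eq:XPhiP} is affine, $C$ is convex by Assumption~\ref{ass:C}.(iii), and the weight $(1+\Phi_t)$ is nonnegative and does not depend on $x$ or $P$, so $\overline{\mathcal{J}}_{x,\varphi}(\cdot)$ is convex jointly in $(x,P)$ and the infimum over the convex set $\mathcal A$ preserves convexity in $x$. For differentiability and the identification $\overline V_x = \bar v$, I would apply the same machinery as in Proposition~\ref{contv}.(i) directly to the representation \eqref{eq:Vbar}: invoke \cite[Theorem 3.2]{KW} (adapted to the infinite-horizon discounted setting with right-continuous controls, as already noted in the excerpt), now with the weighted running cost $H(\omega,t,x) := e^{-\rho t}(1+\Phi_t(\omega))\,C(\mu_0 t + \eta W_t(\omega) + x)$, marginal-cost processes $\gamma_t := e^{-\rho t} K^+ (1+\Phi_t)$ and $\nu_t := e^{-\rho t} K^- (1+\Phi_t)$, and $G \equiv 0$. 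The hypotheses of that theorem hold because $(1+\Phi_t)$ has moments of all orders under $\Q$ (it is an exponential martingale) and $C$ satisfies the polynomial growth and local-Lipschitz bounds of Assumption~\ref{ass:C}, so the integrability requirements are met; differentiating in $x$ under the expectation then yields $\overline V_x(x,\varphi) = \bar v(x,\varphi)$ with $\bar v$ given by \eqref{eq:barv}, after an integration by parts transferring the $K^\pm$ terms to the stopped representation.

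Alternatively, and perhaps more cleanly, one can avoid re-running \cite{KW} and instead differentiate the identity \eqref{VVbar}. Since $\overline V(x,\varphi) = (1+\varphi) V\big(x,\tfrac{\varphi}{1+\varphi}\big)$ and $x\mapsto V(x,\pi)$ is differentiable with $V_x = v$ by Proposition~\ref{contv}.(i), we immediately get that $x\mapsto \overline V(x,\varphi)$ is differentiable with $\overline V_x(x,\varphi) = (1+\varphi)\, v\big(x,\tfrac{\varphi}{1+\varphi}\big)$; it then remains to check that this equals the stopping-game value $\bar v(x,\varphi)$ in \eqref{eq:barv}. This last identity is proved by applying the same change-of-measure computation of Section~\ref{sec:2transf} to the Dynkin game \eqref{optstop} rather than to the control problem: the weight $(1+\Phi_t)$ and the factor $(1+\varphi)^{-1}$ arise from $\d\Q_T/\d\P = \zeta_T$ and $Z_t = 1/\zeta_t$ exactly as they did for $V$, and stopping times are unaffected by the identity of the filtration since $\mathbb F^S = \mathbb F^W = \mathbb F^{W^*}$.

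The main obstacle is the technical verification that \cite[Theorem 3.2]{KW} (or the change-of-measure passage to the limit $T\to\infty$) genuinely applies with the extra unbounded weight $(1+\Phi_t)$ present in both the running cost and the control costs: one must confirm the requisite uniform integrability so that differentiation under the expectation and the limit $T\to\infty$ are legitimate. This is why the authors state that ``arguments similar to those used in the proof of Proposition~\ref{contv}'' suffice and omit the details — the estimates are routine given that $\Phi$ is an $L^q(\Q)$-bounded martingale for every $q$ on compact time intervals and that the discounting $e^{-\rho t}$ with $\rho>0$ controls the growth, but they do require care and are the only non-cosmetic point in the proof.
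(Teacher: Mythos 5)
Your proposal is sound and, for part~(i), matches the paper exactly: the paper simply states that existence (and uniqueness for strictly convex $C$) is ``preserved under the change of measure,'' which is precisely the \eqref{eq:eqivLaw}/\eqref{VVbar} argument you spell out. For part~(ii), your Route~A is exactly what the authors mean by ``arguments similar to those used in the proof of Proposition~\ref{contv}'': re-run \cite[Theorem 3.2]{KW} with the nonnegative $\mathbb{F}^W$-adapted weight $(1+\Phi_t)$ folded into both the running cost $H$ and the marginal-cost processes $\gamma_t,\nu_t$, checking integrability using that $\Phi$ is an exponential $\Q$-martingale together with the discounting. Your Route~B is a genuinely cleaner decomposition: instead of re-verifying the Karatzas--Wang hypotheses with the unbounded weight, you differentiate the identity \eqref{VVbar} in $x$ (which is licensed by $V_x=v$ from Proposition~\ref{contv}.(i)) and then invoke the game-level identity \eqref{eq:v}, $\bar v(x,\varphi)=(1+\varphi)\,v\big(x,\tfrac{\varphi}{1+\varphi}\big)$. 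Since the paper derives \eqref{eq:v} immediately after the proposition by applying the same change-of-measure manipulations to the Dynkin game, there is no circularity; the only point to be careful about, which you flag, is that passing $T\to\infty$ in the change of measure for the game must control the (possibly sign-changing) integrand $C'(X^0)$, which the discounting and the polynomial bound $|C'|\lesssim 1+|x|^{p-1}$ (from Assumption~\ref{ass:C}.(ii)) handle. Both routes are correct; Route~B buys you a one-line derivative calculation at the cost of proving \eqref{eq:v} separately, while Route~A reproves the connection from scratch in the $(x,\varphi)$-coordinates.
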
 

It further follows from the previous analysis, namely Sections \ref{TransPhi}--\ref{Q}, that the value function $v(x,\pi)$ of the optimal stopping game in \eqref{optstop} is connected to the value function $\bar{v}(x,\varphi)$ of the new game introduced above in \eqref{eq:barv}, according to (see also \eqref{VVbar} for the control value functions) the following equality 
\begin{equation}
\label{eq:v}
\bar{v}(x,\varphi)=(1+\varphi)\,v\big(x,\tfrac{\varphi}{1+\varphi}\big) .
\end{equation}
In view of the above relationship, the value function $\bar{v}(\cdot,\cdot)$ inherits important properties which have already been proved for $v(\cdot,\cdot)$ in Section \ref{sec:OSgame}. In particular, we have directly from Proposition \ref{contv}.$(ii)$ and $(iv)$ the following result.
\begin{proposition}
\label{prop:barv}
The value function $\bar{v}$ defined in \eqref{eq:barv} satisfies: 
\begin{enumerate}
\item[(i)] $(x,\varphi) \mapsto \bar{v}(x,\varphi)$ is continuous over $\R\times (0,\infty)$;
\item[(ii)] $x \mapsto \bar{v}(x,\varphi)$ is nondecreasing.
\end{enumerate}
\end{proposition}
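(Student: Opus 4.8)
The plan is to obtain both claims essentially for free from the regularity of the game value $v$ already established in Proposition \ref{contv}, transporting those properties through the algebraic identity \eqref{eq:v}, namely $\bar v(x,\varphi)=(1+\varphi)\,v\bigl(x,\tfrac{\varphi}{1+\varphi}\bigr)$. First I would record that the change of variable $\psi(\varphi):=\varphi/(1+\varphi)$ is a smooth strictly increasing bijection from $(0,\infty)$ onto $(0,1)$, with smooth inverse $\psi^{-1}(\pi)=\pi/(1-\pi)$, so that \eqref{eq:v} reads $\bar v(x,\varphi)=(1+\varphi)\,v(x,\psi(\varphi))$ on $\R\times(0,\infty)$. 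The identity \eqref{eq:v} itself has already been justified in Sections \ref{TransPhi}--\ref{Q} (it is the $x$-derivative counterpart of \eqref{VVbar}, since $\bar v=\overline V_x$ and $v=V_x$), so I would simply invoke it.

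For part (ii), I would fix $\varphi\in(0,\infty)$ and observe that $\psi(\varphi)\in(0,1)$ does not depend on $x$, while $1+\varphi>0$ is a constant in $x$; hence $x\mapsto\bar v(x,\varphi)$ is a strictly positive multiple of $x\mapsto v(x,\psi(\varphi))$, which is nondecreasing by Proposition \ref{contv}.$(ii)$, and the monotonicity follows. An equivalent route, if one prefers not to invoke \eqref{eq:v}, is to rerun the argument behind Proposition \ref{contv}.$(ii)$ directly on the representation \eqref{eq:barv}, exploiting the convexity of $C$ and the linearity of $P\mapsto X^P$; but passing through \eqref{eq:v} is the shortest.

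For part (i), I would note that $(x,\varphi)\mapsto(x,\psi(\varphi))$ is continuous from $\R\times(0,\infty)$ into $\R\times(0,1)$, compose it with the continuous function $v$ from Proposition \ref{contv}.$(iv)$ to get continuity of $(x,\varphi)\mapsto v(x,\psi(\varphi))$, and finally multiply by the continuous factor $(x,\varphi)\mapsto 1+\varphi$, which preserves continuity. There is essentially no obstacle in this argument; the only point to keep in mind is that $\psi$ lands in the open interval $(0,1)$, precisely the set on which Proposition \ref{contv} supplies continuity, so the excluded boundary beliefs $\pi\in\{0,1\}$ never enter the reasoning — consistent with the stated domain $\R\times(0,\infty)$ of $\bar v$.
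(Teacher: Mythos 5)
Your proposal is correct and follows exactly the paper's reasoning: the paper also deduces Proposition \ref{prop:barv} directly from Proposition \ref{contv}.$(ii)$ and $(iv)$ through the identity \eqref{eq:v}, noting that multiplication by the positive continuous factor $1+\varphi$ and composition with the smooth bijection $\varphi\mapsto\varphi/(1+\varphi)$ preserve both the monotonicity in $x$ and the joint continuity. Your write-up merely makes explicit what the paper leaves as an immediate consequence.
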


Following similar steps as in Section \ref{sec:OSgame} to study the new game \eqref{eq:barv}, we define below the so-called continuation (waiting) region 
\begin{equation}
\label{C2}
 {\mathcal{C}_2}:=\big\{(x,\varphi)\in\R\times (0,\infty): \ -K^+(1+\varphi)< \bar{v}(x,\varphi)<K^-(1+\varphi)\big\},
\end{equation}
and the stopping region ${\mathcal{S}_2} := {\mathcal{S}_2}^+ \cup {\mathcal{S}_2}^-$, whose components are given by 
\begin{align}
\begin{split}
{\mathcal{S}_2^+}&:=\big\{(x,\varphi)\in\R\times (0,\infty): \ \bar{v}(x,\varphi)\leq - K^+(1+\varphi)\big\}, \\    
{\mathcal{S}_2^-}&:=\big\{(x,\varphi)\in\R\times (0,\infty): \ \bar{v}(x,\varphi)\geq K^-(1+\varphi)\big\}.
\end{split}
\label{S2}
\end{align}
Moreover, in light of the continuity of $\bar{v}$ in Proposition \ref{prop:barv}.$(i)$, we conclude that the continuation region $\mathcal{C}_2$ is an open set, while the two components of the stopping regions ${\mathcal{S}_2}^{\pm}$ are both closed sets.
We can therefore define the free boundaries
\begin{align}
\label{eq:b+}
\begin{split}
b_+(\varphi)&:=\sup\big\{x\in\R: \ \overline{v}(x,\varphi)\leq K^+(1+\varphi)\big\}, \\
b_-(\varphi)&:=\inf\{x\in\R: \ \overline{v}(x,\varphi)\geq K^-(1+\varphi)\}.
\end{split}
\end{align}
Then, by using the fact that $\bar{v}$ is nondecreasing with respect to $x$ (see Proposition \ref{prop:barv}.$(ii)$), we can obtain the structure of the continuation and stopping regions, as 
\begin{align}
\label{C2b}
\begin{split}
&{\mathcal{C}_2}=\big\{(x,\varphi)\in\R\times (0,\infty): \ b_+(\varphi)<x<b_-(\varphi)\big\}, \\
{\mathcal{S}_2^+} \hspace{-1mm}= \hspace{-1mm}\big\{(x,\varphi)\hspace{-0.5mm}\in \hspace{-0.5mm}\R &\times \hspace{-0.5mm}(0,\infty)\hspace{-0.5mm}: \hspace{-0.5mm}x \hspace{-0.5mm}\leq \hspace{-0.5mm}b_+(\varphi)\big\}, \;
{\mathcal{S}_2^-} \hspace{-1mm}= \hspace{-1mm}\big\{(x,\varphi)\hspace{-0.5mm}\in \hspace{-0.5mm}\R\times \hspace{-0.5mm}(0,\infty)\hspace{-0.5mm}: \hspace{-0.5mm}b_-(\varphi) \hspace{-0.5mm}\leq \hspace{-0.5mm}x \big\}.
\end{split}
\end{align}
Clearly, the continuity of $\bar{v}$ implies that these free boundaries $b_\pm$ are strictly separated, namely $b_+(\varphi) < b_-(\varphi)$ for all $\varphi \in (0,\infty)$. 

Moreover, observe that the relationship in \eqref{eq:v} together with the definitions \eqref{C1} and \eqref{C2} of $\mathcal{C}_1$ and $\mathcal{C}_2$, respectively, imply that the latter two regions are equal under the transformation from $(x,\pi)$- to $(x,\varphi)$-coordinates.
To be more precise, for any $(x,\pi) \in \R \times (0,1)$, define the transformation
$\overline T
:=(\overline T_1, \overline T_2):\R\times(0,1)\to\R\times(0,\infty)$, 
by $(\overline T_1(x,\pi), \overline T_2(x,\pi)) 
=(x,\frac \pi{1-\pi}),$
which is invertible and its inverse is given by
$\overline T^{-1}(x,\varphi) 
=(x, \frac{\varphi}{1+\varphi})$,
for $(x,\varphi)\in \R\times(0,\infty).$
Hence, $\overline T: \R\times(0,1)\to\R\times(0,\infty)$ is a global diffeomorphism, which implies together with the expressions of \eqref{C1}--\eqref{S1} and \eqref{C2}--\eqref{S2} that 
${\mathcal{C}_2}=\overline T({\mathcal{C}_1})$ and 
${\mathcal{S}_2^\pm}=\overline T({\mathcal{S}_1^{\pm}}).$
Taking this into account together with the expressions \eqref{C1a}--\eqref{S1a} of $\mathcal{C}_1$ and $\mathcal{S}_1^{\pm}$, we can further conclude from the expressions \eqref{C2b} 
of $\mathcal{C}_2$ and $\mathcal{S}_2^{\pm}$ that 
\begin{equation} \label{b=a}
b_\pm(\varphi)=a_\pm\big(\tfrac{\varphi}{1+\varphi}\big).
\end{equation}

Hence, in light of the previously proved results for $a_\pm$ in Proposition \ref{prop:aa}, we also obtain the following preliminary properties of the free boundaries $\varphi \mapsto b_\pm(\varphi)$.
\begin{proposition} \label{prop:barb}
The free boundaries $b_\pm$ defined in \eqref{eq:b+} 
satisfy: 
\begin{enumerate}
\item[(i)] $b_\pm(\cdot)$ are nonincreasing on $(0,\infty)$.
\item[(ii)] $b_+(\cdot)$ is left-continuous and $b_-(\cdot)$ is right-continuous on $(0,\infty)$.
\item[(iii)] $b_\pm(\cdot)$ are bounded by $x^*_{\pm}$ as in Proposition \ref{prop:aa}:
$x_+^*\leq b_+(\varphi)<b_-(\varphi)\leq x_-^*$, for all $\varphi\in(0,\infty)$.
Moreover, we have $b_+(\varphi) \leq (C')^{-1}(-\rho K^+)$ and $b_-(\varphi) \geq (C')^{-1}(\rho K^-)$ for all $\varphi \in (0,\infty)$.
\end{enumerate}
\end{proposition}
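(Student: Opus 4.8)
The strategy is to simply transport each of the three properties of $a_\pm$ established in Proposition \ref{prop:aa} through the explicit change of variables $b_\pm(\varphi)=a_\pm\bigl(\tfrac{\varphi}{1+\varphi}\bigr)$ recorded in \eqref{b=a}. The key observation is that the map $\varphi\mapsto \tfrac{\varphi}{1+\varphi}$ is a strictly increasing $C^\infty$ bijection from $(0,\infty)$ onto $(0,1)$, so it preserves monotonicity and one-sided continuity, and carries global bounds on $(0,1)$ to global bounds on $(0,\infty)$.

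For part $(i)$, since $a_\pm$ are nonincreasing on $(0,1)$ by Proposition \ref{prop:aa}.$(i)$ and $\varphi\mapsto\tfrac{\varphi}{1+\varphi}$ is (strictly) increasing, the composition $b_\pm$ is nonincreasing on $(0,\infty)$. For part $(ii)$, the same composition argument applies: a left-continuous function composed with a continuous increasing function is left-continuous, and likewise for right-continuity, so $b_+$ inherits left-continuity and $b_-$ inherits right-continuity from Proposition \ref{prop:aa}.$(ii)$. For part $(iii)$, from $x_+^*\leq a_+(\pi)<a_-(\pi)\leq x_-^*$ for all $\pi\in(0,1)$ (Proposition \ref{prop:aa}.$(iii)$), evaluating at $\pi=\tfrac{\varphi}{1+\varphi}$ — which ranges over all of $(0,1)$ as $\varphi$ ranges over $(0,\infty)$ — yields immediately $x_+^*\leq b_+(\varphi)<b_-(\varphi)\leq x_-^*$ for all $\varphi\in(0,\infty)$. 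The same substitution in the bounds $a_+(\pi)\leq (C')^{-1}(-\rho K^+)$ and $a_-(\pi)\geq (C')^{-1}(\rho K^-)$ gives the corresponding bounds for $b_\pm$. The strict separation $b_+(\varphi)<b_-(\varphi)$ can alternatively be read off directly from the continuity of $\bar v$ as already noted in the text preceding the statement.

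There is essentially no obstacle here: the proposition is a transfer result and all the analytic content lives in Proposition \ref{prop:aa} together with the already-established identity \eqref{b=a}. The only point requiring (minimal) care is confirming that \eqref{b=a} holds with the correct one-sided definitions of $b_\pm$ in \eqref{eq:b+} — i.e.\ that the supremum/infimum defining $b_\pm$ indeed transforms as claimed — but this was settled in the paragraph deriving \eqref{b=a} from the diffeomorphism $\overline T$ and the region identities $\mathcal C_2=\overline T(\mathcal C_1)$, $\mathcal S_2^\pm=\overline T(\mathcal S_1^\pm)$. Hence the proof is a short three-line verification, one line per item.
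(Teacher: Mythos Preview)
Your proposal is correct and coincides with the paper's own approach: the proposition is stated immediately after deriving \eqref{b=a} and is explicitly presented as a transfer of Proposition~\ref{prop:aa} through that identity, with the paper noting that this relationship ``is also particularly useful in the proof of Proposition~\ref{prop:barb}.$(i)$ and $(iii)$.'' Your handling of part~$(ii)$ via composition with the increasing continuous map $\varphi\mapsto\varphi/(1+\varphi)$ is valid; equivalently one could argue directly from the monotonicity in $(i)$ and the closedness of $\mathcal{S}_2^\pm$, mirroring the proof of Proposition~\ref{prop:aa}.$(ii)$.
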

Notice that the explicit relationship \eqref{b=a} between the free boundaries $a_{\pm}$ and $b_{\pm}$ that we proved above, is not only crucial for retrieving the original boundaries $a_{\pm}$ from $b_{\pm}$, but it is also particularly useful in the proof of Proposition \ref{prop:barb}.$(i)$ and $(iii)$. In fact, proving the monotonicity and boundedness of $b_{\pm}$ by directly working on the Dynkin game \eqref{eq:barv} is not a straightforward task.

Up this point, we managed to obtain the structure of the optimal stopping strategies and preliminary properties of the corresponding optimal stopping boundaries associated with these strategies, for both Dynkin games \eqref{optstop} and \eqref{eq:barv} connected to the optimal control problems 
 \eqref{functional2} and \eqref{eq:Vbar}, respectively. Moreover, we managed to obtain some regularity results for the latter control value functions (see Propositions \ref{contv}, \ref{prop:Vbar} and \ref{prop:barv}).
In Sections \ref{sec:HJB} and \ref{sec:Verif} below, building on the aforementioned analysis, we show that the control value function $\overline{V}$ has the sufficient regularity needed to construct an optimal control strategy. This will involve the boundaries $b_\pm$.

\section{HJB Equation and Regularity of $\overline{V}$}
\label{sec:HJB}

In this section, we introduce the Hamilton-Jacobi-Bellman (HJB) equation (variational inequality) associated to the control value function $\overline{V}$ defined in \eqref{eq:Vbar} and state-space process $(X^P,\Phi)$ given by \eqref{eq:XPhiP}.
First, let $\mathcal{D}\subseteq \R^2$ be an open domain and define the space $C^{k,h}(\mathcal{D};\R)$ as the space of functions $f:\mathcal{D}\to\R$ which are $k$-times continuously differentiable with respect to the first variable and $h$-times continuously differentiable with respect to the second variable. When $k=h$ we simply write $C^h$. 

We begin our study with the following \emph{ex ante} regularity result for $\overline{V}$. 
Its technical proof can be found in 
Appendix \ref{A}.
\begin{proposition} 
\label{prop:semiconc}
The control value function $\overline{V}$ defined in \eqref{eq:Vbar} is locally semiconcave; that is, for every $R>0$ there exists $L_{R}>0$ such that for all $\lambda\in[0,1]$ and all $(x,\varphi),(x',\varphi')$ such that $|(x,\varphi)|\leq R$ and $|(x',\varphi')|\leq R$, we have
$$
\lambda  \overline{V}(x,\varphi)+(1-\lambda) \overline{V}(x',\varphi')-\overline{V}(\lambda(x,\varphi)+(1-\lambda)(x',\varphi'))\leq L_{R}\lambda(1-\lambda)|(x,\varphi)-(x',\varphi')|^{2}.
$$
In particular, by \cite[Theorem 2.1.7]{CS}, we conclude that $\overline{V}$ is locally Lipschitz.
\end{proposition}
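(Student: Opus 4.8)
The plan is to establish local semiconcavity of $\overline{V}$ directly from the probabilistic representation in \eqref{eq:Vbar}, exploiting the semiconcavity estimate on $C$ granted by Assumption \ref{ass:C}.$(iii)$ together with the linear dependence of the controlled state on the starting point. Fix $R>0$, take $(x,\varphi),(x',\varphi')$ in the ball of radius $R$, and set $(x_\lambda,\varphi_\lambda):=\lambda(x,\varphi)+(1-\lambda)(x',\varphi')$. The first step is to note that, for the controlled dynamics \eqref{eq:XPhiP}, one has $X^{P}$ affine in $x$ and $\Phi$ linear in $\varphi$; more precisely, writing $\Phi^{\varphi}_t=\varphi\,\mathcal{E}_t$ with $\mathcal{E}_t:=\exp(\gamma W_t-\tfrac12\gamma^2 t)$ the stochastic exponential, and, for any fixed admissible $P\in\mathcal{A}$, $X^{P;x}_t=x+\mu_0 t+\eta W_t+P^+_t-P^-_t$, we get $\Phi^{\varphi_\lambda}=\lambda\Phi^{\varphi}+(1-\lambda)\Phi^{\varphi'}$ and $X^{P;x_\lambda}=\lambda X^{P;x}+(1-\lambda)X^{P;x'}$ pathwise, using the \emph{same} control $P$ for all three starting points.

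The second step is the standard suboptimality argument: pick a control $P$ that is $\varepsilon$-optimal for the midpoint $(x_\lambda,\varphi_\lambda)$, and use this same $P$ as a (suboptimal) competitor for the two endpoints. Then
\begin{align*}
\lambda\overline{V}(x,\varphi)+(1-\lambda)\overline{V}(x',\varphi')-\overline{V}(x_\lambda,\varphi_\lambda)
&\le \lambda\overline{\mathcal{J}}_{x,\varphi}(P)+(1-\lambda)\overline{\mathcal{J}}_{x',\varphi'}(P)-\overline{\mathcal{J}}_{x_\lambda,\varphi_\lambda}(P)+\varepsilon.
\end{align*}
In the difference of the three functionals, the control-cost terms $K^\pm\d P^\pm_t$ combine with the weights $(1+\Phi_t)$, which are themselves affine in $\varphi$, so the control-cost contribution telescopes \emph{exactly} to zero by the linearity of $\varphi\mapsto(1+\Phi^{\varphi}_t)$. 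What remains is the running-cost term
\[
\E^{\Q}\!\left[\int_0^\infty e^{-\rho t}\Big(\lambda(1+\Phi^{\varphi}_t)C(X^{P;x}_t)+(1-\lambda)(1+\Phi^{\varphi'}_t)C(X^{P;x'}_t)-(1+\Phi^{\varphi_\lambda}_t)C(X^{P;x_\lambda}_t)\Big)\d t\right].
\]
Using $\Phi^{\varphi_\lambda}_t=\lambda\Phi^{\varphi}_t+(1-\lambda)\Phi^{\varphi'}_t$ one splits this integrand into (a) a ``cross'' term $\lambda(1-\lambda)(\Phi^{\varphi}_t-\Phi^{\varphi'}_t)(C(X^{P;x}_t)-C(X^{P;x'}_t))$ coming from the non-constant weight, plus (b) the convex-combination defect of $C$ itself, $(1+\Phi^{\varphi_\lambda}_t)\big(\lambda C(X^{P;x}_t)+(1-\lambda)C(X^{P;x'}_t)-C(X^{P;x_\lambda}_t)\big)$. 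Term (b) is controlled by Assumption \ref{ass:C}.$(iii)$: it is bounded by $\alpha_2\lambda(1-\lambda)(1+\Phi^{\varphi_\lambda}_t)(1+C(X^{P;x}_t)+C(X^{P;x'}_t))^{1-2/p}|X^{P;x}_t-X^{P;x'}_t|^2=\alpha_2\lambda(1-\lambda)(1+\Phi^{\varphi_\lambda}_t)(\cdots)^{1-2/p}|x-x'|^2$, since the $X$-difference is exactly $|x-x'|$ pathwise. Term (a) is handled by Cauchy--Schwarz and Assumption \ref{ass:C}.$(ii)$ (Lipschitz-type bound on $C$): $|C(X^{P;x}_t)-C(X^{P;x'}_t)|\le\alpha_1(1+C(X^{P;x}_t)+C(X^{P;x'}_t))^{1-1/p}|x-x'|$, while $|\Phi^{\varphi}_t-\Phi^{\varphi'}_t|=|\varphi-\varphi'|\,\mathcal{E}_t\le 2R\,\mathcal{E}_t$; crucially $|\varphi-\varphi'|^2\le 2R|\varphi-\varphi'|\le 2R|(x,\varphi)-(x',\varphi')|$ is not quite what we want, so instead we keep one factor $|\varphi-\varphi'|\le\lambda(1-\lambda)^{-1}\cdots$—more carefully, we bound term (a) by $\lambda(1-\lambda)|\varphi-\varphi'|\,|x-x'|\,\mathcal{E}_t(1+\cdots)^{1-1/p}\le\tfrac12\lambda(1-\lambda)(|\varphi-\varphi'|^2+|x-x'|^2)\mathcal{E}_t(1+\cdots)^{1-1/p}$, giving the desired $|(x,\varphi)-(x',\varphi')|^2$ structure.

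The remaining, and genuinely delicate, step is the \emph{uniform-in-$R$ integrability}: one must produce a constant $L_R$ by showing that $\E^{\Q}\big[\int_0^\infty e^{-\rho t}\mathcal{E}_t(1+C(X^{P;x}_t)+C(X^{P;x'}_t))^{1-1/p}\d t\big]$ and the analogous moment with exponent $1-2/p$ are bounded uniformly over admissible $P$ and over $(x,\varphi),(x',\varphi')$ in the $R$-ball. Here the key subtlety is that $P$ is $\varepsilon$-optimal for the midpoint, hence its cost—in particular $\E^{\Q}[\int_0^\infty e^{-\rho t}(1+\Phi_t)C(X^{P;x_\lambda}_t)\d t]$—is bounded by $\overline{V}(x_\lambda,\varphi_\lambda)+1\le\sup_{B_R}\overline{V}+1<\infty$, and by the convexity of $C$ (Assumption \ref{ass:C}.$(iii)$, lower bound) together with the pathwise identities one transfers a moment bound on $C(X^{P;x_\lambda}_t)$ to bounds on $C(X^{P;x}_t)$ and $C(X^{P;x'}_t)$; the weight $\mathcal{E}_t$ (having all moments under $\Q$, with $\E^{\Q}[\mathcal{E}_t]=1$) is absorbed using $\mathcal{E}_t\le 1+\mathcal{E}_t^q$ and Hölder with the growth bound $C(x)\le\alpha_0(1+|x|^p)$ from Assumption \ref{ass:C}.$(i)$, together with standard $L^p$-estimates on $\sup_{s\le t}|X^{P;x}_s-x-P^+_s+P^-_s|=\eta\sup_{s\le t}|W_s|+|\mu_0|t$ which do not involve $P$. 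Assembling these bounds and letting $\varepsilon\downarrow0$ yields the claimed inequality with an explicit $L_R$, and the local Lipschitz conclusion is then immediate from \cite[Theorem 2.1.7]{CS}. I expect the main obstacle to be precisely this last integrability bookkeeping — keeping the estimates uniform over all admissible controls simultaneously, which is why the exponents $1-1/p$ and $1-2/p$ in Assumption \ref{ass:C} (rather than full Lipschitz/semiconcavity constants) are exactly what makes the Hölder argument close.
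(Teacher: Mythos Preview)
Your approach is essentially the same as the paper's: both exploit the pathwise affine dependence $X^{P;x}_t = x + (\text{control-independent}) + P_t$ and $\Phi^{\varphi}_t = \varphi\,\mathcal{M}_t$, observe that the control-cost terms cancel exactly, split the running-cost defect into a semiconcavity-of-$C$ piece (your term (b)) and a cross piece (your term (a)), and close with H\"older against the exponents $1-1/p$, $1-2/p$ from Assumption~\ref{ass:C}. The paper restricts at the outset to a class $\mathcal{A}_o$ of ``competitive'' controls, which is equivalent to your choice of an $\varepsilon$-optimal control at the midpoint; both routes rely on the same a~priori moment bound $\E^{\Q}\big[\int_0^\infty e^{-\rho t}(1+\Phi^{\varphi_\lambda}_t)\,C(X^{P;x_\lambda}_t)\,\d t\big]\leq\kappa_o(1+\varphi_\lambda)(1+|x_\lambda|^p)$, obtained by comparison with the null control.

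One caution on your integrability sketch: the suggestion at the end to ``absorb $\mathcal{E}_t$'' via the polynomial growth $C(x)\leq\alpha_0(1+|x|^p)$ combined with moment bounds on the \emph{uncontrolled} part $\eta W_t+\mu_0 t$ of $X^{P;x}$ does not work as stated, because $|X^{P;x}_t|$ genuinely depends on $P^{\pm}_t$, and these are not pointwise bounded for admissible controls. The correct route is the one you already indicated just before: use the pathwise identity $X^{P;x}_t = X^{P;x_\lambda}_t + (x-x_\lambda)$ with $|x-x_\lambda|\leq 2R$, together with Assumption~\ref{ass:C}.$(ii)$ and Young's inequality, to obtain a pointwise bound $C(X^{P;x}_t)\leq K\,C(X^{P;x_\lambda}_t)+K_R$; then the midpoint moment bound propagates to the endpoints. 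This is effectively what the paper does through its estimate \eqref{eq:semiconc-4}, though its presentation of the class $\mathcal{A}_o$ is terse on exactly this point. Note also that to control the weighted moment $\E^{\Q}\big[\int_0^\infty e^{-\rho t}\mathcal{M}_t\,C(X^{P;x_\lambda}_t)\,\d t\big]$ from the a~priori bound you need $\varphi_\lambda$ bounded away from $0$; this is automatic once local semiconcavity is understood on compacts of the open domain $\R\times(0,\infty)$ rather than on Euclidean balls meeting $\{\varphi=0\}$, which is all that is used downstream.
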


Given the locally Lipschitz continuity proved in the previous result, we now aim at employing the HJB equation to investigate further regularity of $\overline{V}$. 
To that end, we define on $f\in C^{2}(\R \times (0,\infty);\R)$ the second order differential operator
\begin{equation*}
\mathcal{L}f(x,\varphi):=\mu_0f_x(x,\varphi)+\frac{1}{2}\left(\eta^{2 }f_{xx}(x,\varphi)+\gamma^2\varphi^{2}f_{\varphi\varphi}(x,\varphi)+2\gamma\eta\varphi f_{x\varphi}(x,\varphi)\right).
\end{equation*}
By the dynamic programming principle, we expect that $\overline{V}$ solves (in a suitable sense) the HJB equation (in the form of a variational inequality) 
\begin{equation}
\label{HJB}
\max\big\{(\rho-{\mathcal{L}}){u}(x,\varphi)-(1+\varphi)C(x), -u_{x}(x,\varphi)-K^+(1+\varphi), u_{x}(x,\varphi)-K^-(1+\varphi) \big\}=0,
\end{equation}
for $(x,\varphi)\in \R\times (0,\infty)$. 
In particular, we now first show that the value function $\overline{V}$ of the control problem defined in \eqref{eq:Vbar} is a viscosity solution to \eqref{HJB}; refer to \cite[Definition 4.5]{Federico2014} for the formal definition in a similar setting and references related to the validity of the dynamic programming principle. 
Following the arguments developed in \cite[Theorem 5.1, Section VIII.5]{FlemingSoner2006}, and using the a priori regularity obtained in Proposition \ref{prop:semiconc}, one can show the following classical result.

\begin{proposition}
\label{prop:Vvisc}
The value function $\overline{V}$ defined in \eqref{eq:Vbar} is a locally Lipschitz continuous viscosity solution to \eqref{HJB}.
\end{proposition}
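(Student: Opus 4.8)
The plan is to verify the two defining inequalities of viscosity solutions for \eqref{HJB} using the dynamic programming principle (DPP) for the singular control problem \eqref{eq:Vbar}. Throughout, I will rely on the local Lipschitz continuity of $\overline{V}$ already established in Proposition \ref{prop:semiconc}, which in particular guarantees that $\overline{V}$ is continuous, so that the comparison of $\overline{V}$ with smooth test functions at local maxima/minima is well-posed. I would first record the (standard in this setting, cf.\ \cite[Section VIII.5]{FlemingSoner2006}) DPP: for every stopping time $\theta$ and every $P\in\mathcal{A}$,
\begin{equation*}
\overline{V}(x,\varphi)\le \E^{\Q}\Big[\int_0^{\theta}e^{-\rho t}(1+\Phi_t)\big(C(X^P_t)\,\d t+K^+\d P^+_t+K^-\d P^-_t\big)+e^{-\rho\theta}\overline{V}(X^P_\theta,\Phi_\theta)\Big],
\end{equation*}
with equality holding when the infimum is taken over $P$ and $\theta$ (for a well-chosen family of stopping times). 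I would cite the reference in the paper for the validity of the DPP rather than reprove it.

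For the \textbf{supersolution property}, let $\psi\in C^2$ touch $\overline{V}$ from below at an interior point $(x_0,\varphi_0)$, with $\overline{V}(x_0,\varphi_0)=\psi(x_0,\varphi_0)$. I would first take $P\equiv 0$ and $\theta=\theta_r\wedge h$ (the exit time of $(X^0,\Phi)$ from a small ball of radius $r$ around $(x_0,\varphi_0)$, capped at time $h$) in the DPP inequality, subtract $\psi(x_0,\varphi_0)$, apply It\^o's formula to $e^{-\rho t}\psi(X^0_t,\Phi_t)$, divide by $h$ and let $h\downarrow 0$: this yields $(\rho-\mathcal{L})\psi(x_0,\varphi_0)-(1+\varphi_0)C(x_0)\le 0$. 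Next, to get $-\psi_x(x_0,\varphi_0)-K^+(1+\varphi_0)\le 0$ and $\psi_x(x_0,\varphi_0)-K^-(1+\varphi_0)\le 0$, I would use in the DPP a control that makes an immediate lump jump of size $\varepsilon$ in $P^+$ (resp.\ $P^-$) and then does nothing, giving $\overline{V}(x_0,\varphi_0)\le (1+\varphi_0)K^+\varepsilon+\overline{V}(x_0+\varepsilon,\varphi_0)$ (resp.\ with $-\varepsilon$ and $K^-$); since $\psi\le\overline{V}$ with equality at $(x_0,\varphi_0)$, dividing by $\varepsilon$ and letting $\varepsilon\downarrow0$ gives the two gradient inequalities. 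Hence $\max\{\cdots\}\le 0$ at $(x_0,\varphi_0)$, i.e.\ $\overline{V}$ is a viscosity supersolution.

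For the \textbf{subsolution property}, let $\psi\in C^2$ touch $\overline{V}$ from above at an interior point $(x_0,\varphi_0)$. I need to show $\max\{\cdots\}\ge 0$ there; equivalently, assuming for contradiction that all three terms are strictly negative at $(x_0,\varphi_0)$, I would derive a contradiction with the DPP equality. By continuity of the coefficients and of the derivatives of $\psi$, there is a small ball $B_r(x_0,\varphi_0)$ and $\delta>0$ on which simultaneously $(\rho-\mathcal{L})\psi-(1+\varphi)C(x)\le-\delta$, $-\psi_x-K^+(1+\varphi)\le-\delta$ and $\psi_x-K^-(1+\varphi)\le-\delta$. For an arbitrary $P\in\mathcal{A}$, let $\theta$ be the first exit time of $(X^P,\Phi)$ from $B_r(x_0,\varphi_0)$; applying It\^o's formula to $e^{-\rho t}\psi(X^P_t,\Phi_t)$ up to $\theta$ — carefully handling the bounded-variation part $P$ and its jumps, where the negativity of $-\psi_x-K^+(1+\varphi)$ and $\psi_x-K^-(1+\varphi)$ exactly dominates the control-cost increments — one obtains that $\overline{V}(x_0,\varphi_0)=\psi(x_0,\varphi_0)$ is strictly smaller than the right-hand side of the DPP for every $P$, uniformly by at least $c\,\E^{\Q}[\theta\wedge\text{(cost of control)}]>0$; taking the infimum over $P$ contradicts the DPP equality. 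This requires the standard argument that $\theta$ (and the exerted control before $\theta$) cannot be degenerate, which follows since $\overline{V}<\psi+$const away from $(x_0,\varphi_0)$ is not available, so instead one uses the localization: if the infimizing controls push the state out of $B_r$ instantly, the lump-sum term is controlled by the gradient inequalities, again giving the contradiction.

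The main obstacle I anticipate is the careful treatment of the singular (bounded-variation, possibly jump) control term in It\^o's formula within the subsolution argument: one must split $\d P_t = \d P^{c}_t + \Delta P_t$ and bound $\psi(X^P_{t})-\psi(X^P_{t-})$ by $\int \psi_x\,\d(\text{jump path})$ along a linear interpolation, then absorb these into the cost increments $K^\pm\d P^\pm_t$ using the strict gradient inequalities — this is where the three terms of the HJB max genuinely interact. The elliptic/parabolic part and the supersolution direction are routine given Proposition \ref{prop:semiconc} and the DPP. Since the paper explicitly says this is a ``classical result'' following \cite[Theorem 5.1, Section VIII.5]{FlemingSoner2006}, I would keep these verifications concise and reference that source and \cite[Definition 4.5]{Federico2014} for the precise viscosity notion, rather than reproducing every estimate.
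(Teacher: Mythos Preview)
Your overall strategy is the right one and matches the classical argument the paper defers to in \cite[Theorem~5.1, Section~VIII.5]{FlemingSoner2006}: the paper does not spell out a proof at all, so any comparison is really with Fleming--Soner. However, you have interchanged the two halves of the argument, and as written neither direction goes through.

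For the equation $F:=\max\{(\rho-\mathcal{L})u-(1+\varphi)C,\ -u_x-K^+(1+\varphi),\ u_x-K^-(1+\varphi)\}=0$, the \emph{sub}solution property ($F\le 0$ at points where a $C^2$ test function $\psi$ touches $\overline V$ from \emph{above}) is the ``easy'' direction obtained from the DPP upper bound with suboptimal controls. Concretely, with $\psi\ge\overline V$ and equality at $(x_0,\varphi_0)$, the inequality $\overline V(x_0,\varphi_0)\le \E^{\Q}[\ldots+e^{-\rho\theta}\overline V(X^0_\theta,\Phi_\theta)]$ can be combined with $\overline V\le\psi$ on the right-hand side and It\^o's formula to yield $(\rho-\mathcal{L})\psi-(1+\varphi_0)C(x_0)\le 0$; the immediate-jump controls then give the two gradient inequalities. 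If instead $\psi$ touches from below, you cannot replace $\overline V(X^0_\theta,\Phi_\theta)$ by $\psi(X^0_\theta,\Phi_\theta)$ in the DPP upper bound, so the computation you describe under ``supersolution'' does not produce the claimed inequality. Moreover, even if it did, ``$\max\{\cdots\}\le 0$'' is the subsolution conclusion, not the supersolution one.

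Symmetrically, the \emph{super}solution property ($F\ge 0$ with $\psi$ touching from \emph{below}) is the ``hard'' direction requiring the contradiction argument with $\varepsilon$-optimal controls: take $\psi\le\overline V$, assume all three terms are $\le-\delta$ on a small ball, pick $P^\varepsilon$ with $\overline V(x_0,\varphi_0)+\varepsilon\ge \E^{\Q}[\ldots+e^{-\rho\theta}\overline V(X^{P^\varepsilon}_\theta,\Phi_\theta)]\ge \E^{\Q}[\ldots+e^{-\rho\theta}\psi(\cdot)]$, apply It\^o to $\psi$, and absorb the singular increments via the strict gradient inequalities to reach $\varepsilon\ge \delta\,\E^{\Q}[\theta+P^{+}_{\theta^-}+P^{-}_{\theta^-}]$; one then argues this expectation is bounded below uniformly in $\varepsilon$ (the state cannot leave the ball without either positive time or positive control). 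This is precisely the computation you placed under ``subsolution'' with $\psi$ from above, where again the replacement $\overline V\to\psi$ would go the wrong way.

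In short: keep your two arguments, but swap them---the $P\equiv 0$/jump argument belongs to the subsolution test (touch from above), and the contradiction via near-optimal controls belongs to the supersolution test (touch from below). Your remarks on handling the bounded-variation part (splitting into continuous and jump components and interpolating along the jump) are correct and are exactly what is needed in the supersolution step.
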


Recall definition \eqref{C2} of the continuation region $\mathcal{C}_2$ of $\overline{v}(x,\varphi)$ in \eqref{eq:barv} and the relationship $\overline{V}_x(x,\varphi)=\overline{v}(x,\varphi)$ on $\R\times (0,\infty)$ from Proposition \ref{prop:Vbar}.$(ii)$, to see that  
\begin{equation}
\label{C2V}
{\mathcal{C}_2}=
\big\{(x,\varphi)\in\R\times (0,\infty): \ -K^+(1+\varphi)< \overline{V}_x(x,\varphi)<K^-(1+\varphi)\big\} .
\end{equation}
This implies that $\mathcal{C}_2$ identifies also with the so-called ``inaction region'' of $\overline{V}$, as suggested also by the HJB equation \eqref{HJB}. Combining the latter fact with Proposition \ref{prop:Vvisc} clearly implies the following result. 

\begin{corollary}
\label{cor:Vvisc}
The value function $\overline{V}$  defined in \eqref{eq:Vbar} is a locally Lipschitz continuous viscosity solution to 
$(\rho-{\mathcal{L}}){u}(x,\varphi)-(1+\varphi)C(x) = 0,$ for all $(x,\varphi) \in \mathcal{C}_2.$
\end{corollary}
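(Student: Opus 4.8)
The plan is to derive the statement as an immediate consequence of Proposition~\ref{prop:Vvisc} together with the identification \eqref{C2V} of $\mathcal{C}_2$ as the inaction region of $\overline V$. The point is that inside $\mathcal{C}_2$ the two gradient-constraint terms in the HJB variational inequality \eqref{HJB} are strictly negative, so the only way the maximum can equal zero is for the first term to vanish; this is exactly what a localised viscosity-solution argument captures.

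First I would fix an arbitrary point $(x_0,\varphi_0)\in\mathcal{C}_2$ and recall from \eqref{C2V} that $-K^+(1+\varphi_0)<\overline V_x(x_0,\varphi_0)<K^-(1+\varphi_0)$. Since $\mathcal{C}_2$ is open and $\overline V$ is (locally Lipschitz, hence) continuous, and more importantly since $\overline V_x(\cdot,\cdot)=\overline v(\cdot,\cdot)$ is continuous on $\R\times(0,\infty)$ by Propositions~\ref{prop:Vbar}.$(ii)$ and \ref{prop:barv}.$(i)$, there is an open neighbourhood $\mathcal{N}\subseteq\mathcal{C}_2$ of $(x_0,\varphi_0)$ on which both strict inequalities persist, i.e.\ there exists $\delta>0$ with
\begin{equation*}
-u_x(x,\varphi)-K^+(1+\varphi)\le-\delta\quad\text{and}\quad u_x(x,\varphi)-K^-(1+\varphi)\le-\delta\quad\text{for }u=\overline V,\ (x,\varphi)\in\mathcal{N}.
\end{equation*}
Then I would invoke the standard fact (a direct check from the definition, e.g.\ \cite[Definition 4.5]{Federico2014}) that a viscosity sub/supersolution of the full variational inequality \eqref{HJB} which on $\mathcal{N}$ keeps the last two terms bounded away from $0$ — using here that test functions touching $\overline V$ from above/below at an interior point of $\mathcal{N}$ inherit, by $C^1$-closeness in the relevant directional derivative via the continuity of $\overline V_x$, the same strict sign on those terms — is on $\mathcal{N}$ a viscosity sub/supersolution of the single equation $(\rho-\mathcal L)u(x,\varphi)-(1+\varphi)C(x)=0$. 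Since $(x_0,\varphi_0)$ was arbitrary this gives the claim on all of $\mathcal{C}_2$; local Lipschitz continuity of $\overline V$ is already in Proposition~\ref{prop:Vvisc} and is unaffected.

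I expect the only subtle point to be the handling of the first-order terms in the viscosity definition: for a $C^2$ test function $\psi$ touching $\overline V$ from above at $(x,\varphi)\in\mathcal{N}$ one has $\psi_x(x,\varphi)=\overline V_x(x,\varphi)$ (because $x\mapsto\overline V(x,\varphi)$ is differentiable, Proposition~\ref{prop:Vbar}.$(ii)$), so the gradient-constraint terms evaluated on $\psi$ coincide with those evaluated on $\overline V$ and are therefore $\le-\delta<0$; consequently the viscosity subsolution inequality $\max\{\cdots\}\le 0$ forces $(\rho-\mathcal L)\psi(x,\varphi)-(1+\varphi)C(x)\le 0$, and symmetrically for supersolutions. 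This is entirely routine, so there is no real obstacle — the corollary is a packaging statement, and the one line worth writing carefully is the observation that differentiability of $\overline V$ in $x$ pins down the first derivative of any touching test function, which is what localises \eqref{HJB} to the single PDE on $\mathcal{C}_2$.
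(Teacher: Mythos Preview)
Your argument is correct and follows the same route the paper indicates: the corollary is stated there as an immediate consequence of Proposition~\ref{prop:Vvisc} combined with the identification \eqref{C2V} of $\mathcal{C}_2$ as the inaction region, without spelling out details. Your key observation---that differentiability of $\overline V$ in $x$ forces $\psi_x=\overline V_x$ for any touching test function, so the gradient-constraint terms remain strictly negative on $\mathcal C_2$ when evaluated on $\psi$---is precisely the content the paper suppresses under the word ``clearly''.
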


The result in Corollary \ref{cor:Vvisc} will be used in the forthcoming analysis to upgrade the regularity of the value function in the closure of its inaction region which is the main goal of Section \ref{sec:HJB}. Before reaching this (final) step of our analysis in this section, we prove that $\overline{V}$ is actually globally continuously differentiable. 

\begin{proposition}
\label{prop:barVC1}
The value function in \eqref{eq:Vbar} satisfies 
$\overline{V}\in C^{1}(\R\times(0,\infty); \R)$.
\end{proposition}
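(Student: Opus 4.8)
The plan is to establish $C^1$-regularity of $\overline{V}$ in two directions separately, exploiting what we already know: $\overline{V}$ is locally Lipschitz and locally semiconcave (Proposition~\ref{prop:semiconc}), it is convex and differentiable in $x$ with $\overline{V}_x = \overline{v}$ continuous (Propositions~\ref{prop:Vbar}.$(ii)$ and~\ref{prop:barv}.$(i)$), and it solves the PDE $(\rho-\mathcal{L})u = (1+\varphi)C(x)$ in the viscosity sense inside the open inaction region $\mathcal{C}_2$ (Corollary~\ref{cor:Vvisc}). Since $x$-differentiability with continuous $x$-derivative is already in hand globally, the whole task reduces to showing that $\varphi \mapsto \overline{V}(x,\varphi)$ is continuously differentiable, and that the $\varphi$-derivative is jointly continuous in $(x,\varphi)$.

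First I would handle the interior of $\mathcal{C}_2$. On the open set $\mathcal{C}_2$, $\overline{V}$ is a bounded (locally Lipschitz) viscosity solution of a linear, uniformly elliptic PDE in the two variables $(x,\varphi)$ with smooth (indeed, locally Lipschitz, by Assumption~\ref{ass:C}) coefficients and right-hand side; note $\mathcal{L}$ is non-degenerate on $\R\times(0,\infty)$ because its diffusion matrix $\tfrac12\begin{pmatrix}\eta^2 & \gamma\eta\varphi\\ \gamma\eta\varphi & \gamma^2\varphi^2\end{pmatrix}$ has determinant $0$ --- so actually $\mathcal{L}$ is \emph{degenerate}. This is the subtle point: the operator degenerates along the direction $(\gamma\varphi,-\eta)$ (equivalently, the $\Phi$-equation is driven by the \emph{same} Brownian motion as $X^0$). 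Hence one cannot invoke standard interior Schauder/$W^{2,p}$ elliptic regularity directly. The remedy is the change of variables already foreshadowed in the paper (Sections~\ref{sec:2transf} and~\ref{sec:FBchar}): passing to coordinates $(x,y)$ that decouple the noise turns $\mathcal{L}$ into a parabolic operator $\partial_y$-type (heat operator) in which $\overline{V}$'s transform $\widehat V$ solves a non-degenerate parabolic equation on the continuation region, so interior parabolic regularity gives $\widehat V \in C^{1,2}$ there, hence $\overline{V} \in C^{2}(\mathcal{C}_2)$; in particular $\overline{V}_\varphi$ exists and is continuous on $\mathcal{C}_2$.

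Next I would treat the action region $\mathcal{S}_2^\pm$ and the free boundaries. On $\mathcal{S}_2^+$ one expects $\overline{V}_x = -K^+(1+\varphi)$ and on $\mathcal{S}_2^-$, $\overline{V}_x = K^-(1+\varphi)$ --- this follows from the viscosity property~\eqref{HJB} combined with $\overline{V}_x = \overline v$ and the defining inequalities~\eqref{S2}, \eqref{C2V}; integrating in $x$ shows that on each connected slice of $\mathcal{S}_2^\pm$ the function $\overline{V}(x,\varphi)$ equals an explicit affine-in-$x$ expression plus a function of $\varphi$ alone, namely $\overline V(x,\varphi) = \mp K^\mp(1+\varphi)\,(x - b_\mp(\varphi)) + \overline V(b_\mp(\varphi),\varphi)$, using the boundary structure~\eqref{C2b}. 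Since $b_\pm$ are monotone (Proposition~\ref{prop:barb}) hence continuous off a countable set, and $\overline V$ restricted to the boundary curve is continuous, one reads off continuity of $\overline V_\varphi$ on the \emph{interior} of $\mathcal{S}_2^\pm$; the genuinely delicate step is matching the $\varphi$-derivative across the free boundary $x = b_\pm(\varphi)$, i.e.\ showing no jump in $\overline{V}_\varphi$ there. Here I would use local semiconcavity (Proposition~\ref{prop:semiconc}) together with convexity in $x$: a locally semiconcave function is differentiable precisely where its superdifferential is a singleton, and semiconcavity forces the set of non-differentiability points to have measure zero with one-sided derivatives existing everywhere; combined with the already-established continuity of $\overline V_\varphi$ from \emph{both} sides (from the two paragraphs above) along the curve, a standard argument (e.g.\ testing the viscosity subsolution/supersolution property of~\eqref{HJB} with paraboloids touching at a boundary point, or invoking \cite[Theorem~3.3.7 / Corollary~3.3.8]{CS}-type results on differentiability of semiconcave functions whose one-sided directional derivatives agree) rules out a corner.

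\textbf{Main obstacle.} I expect the crux to be precisely this last matching across $\partial\mathcal{C}_2$: showing $\overline V_\varphi$ does not jump as $(x,\varphi)$ crosses the free boundaries $b_\pm$. The degeneracy of $\mathcal{L}$ means one must be careful that the ``parabolic'' regularity obtained on $\mathcal{C}_2$ is compatible, after undoing the change of variables, with the explicit affine behaviour on $\mathcal{S}_2^\pm$; the cleanest route is to combine (i) semiconcavity, which yields existence of the superdifferential and its upper semicontinuity, (ii) the viscosity equation~\eqref{HJB}, which pins down the gradient from one side on each region, and (iii) the continuity of $\overline v = \overline V_x$ already proved, to force the full gradient $\nabla\overline V$ to be single-valued and continuous everywhere. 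Everything else --- interior regularity inside $\mathcal{C}_2$, the explicit form on $\mathcal{S}_2^\pm$ --- is routine given the machinery already assembled.
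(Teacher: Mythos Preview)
Your region-by-region plan is not what the paper does, and the $\mathcal{S}_2^\pm$ step has a genuine gap. The paper gives a single \emph{global} argument: since \eqref{HJB} reads $\max\{\cdots\}=0$, the viscosity \emph{sub}solution property yields $(\rho-\mathcal{L})\phi(x_o,\varphi_o)\le(1+\varphi_o)C(x_o)$ for every smooth $\phi$ touching $\overline{V}$ from above at \emph{any} point $(x_o,\varphi_o)$ --- not only inside $\mathcal{C}_2$. If $\overline{V}_\varphi$ failed to exist at some $(x_o,\varphi_o)$, then (by local semiconcavity together with the already-known existence and continuity of $\overline{V}_x$) the superdifferential $D^+\overline{V}(x_o,\varphi_o)$ would be a nondegenerate segment in the $\varphi$-direction; one can then build smooth test functions $f^n$ touching from above with $f^n_{xx}=f^n_{x\varphi}=0$ and $f^n_{\varphi\varphi}=-n$ at $(x_o,\varphi_o)$. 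Although the diffusion matrix of $\mathcal{L}$ is degenerate, its $f_{\varphi\varphi}$-coefficient $\tfrac12\gamma^2\varphi_o^2$ is strictly positive, so $(\rho-\mathcal{L})f^n(x_o,\varphi_o)\to+\infty$, contradicting the subsolution inequality. Differentiability plus semiconcavity then gives joint continuity of $\overline{V}_\varphi$ via \cite[Theorem 25.5]{rock}. No change of variables, no interior parabolic regularity, and no free-boundary matching are needed at this stage; those tools appear only \emph{afterwards} in Proposition~\ref{prop:regVhat}, once $C^1$-regularity is already in hand.

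Your treatment of $\mathcal{C}_2$ is essentially fine, but on $\mathcal{S}_2^\pm$ the argument breaks: integrating $\overline{V}_x=-K^+(1+\varphi)$ gives $\overline{V}(x,\varphi)=-K^+(1+\varphi)x+g(\varphi)$ locally, yet nothing so far forces $g\in C^1$ --- a priori it is only locally Lipschitz and semiconcave. Expressing $g$ through $\overline{V}(b_+(\varphi),\varphi)$ and $b_+(\varphi)$ is circular, since neither is known to be $\varphi$-differentiable at this point in the argument. The boundary-matching step then has nothing firm to match against on the $\mathcal{S}_2^\pm$ side. The paper's trick sidesteps all of this because the subsolution inequality and the strictly positive $f_{\varphi\varphi}$-coefficient are available at every point of $\R\times(0,\infty)$.
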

\begin{proof} 
In order to prove that $\overline{V}\in C^{1}(\R\times(0,\infty); \R)$, we need to prove that both (classical) derivatives $\overline{V}_x(x,\varphi), \overline{V}_\varphi(x,\varphi)$ of $\overline{V}(x,\varphi)$ in the directions $x$ and $\varphi$, respectively, are continuous on $\R \times (0,\infty)$. We thus split the proof in two steps. 

\emph{Step 1. Continuity of $\overline{V}_x$.} 
We already know from Proposition \ref{prop:Vbar}.$(ii)$ that $\overline{V}_x=\bar{v}$ exists and from Proposition \ref{prop:barv}.$(i)$ that $(x,\varphi) \mapsto \bar{v}(x,\varphi)$ is continuous over $\R\times (0,\infty)$. Hence, we conclude that $(x,\varphi) \mapsto \overline{V}_x(x,\varphi)$ is continuous on $\R \times (0,\infty)$.

\emph{Step 2. Continuity of $\overline{V}_\varphi$.} 
Let us now show that the (classical) derivative $\overline{V}_{\varphi}$ exists at each $(x_{o},\varphi_{o})\in \R\times (0,\infty)$. 

We assume, without loss of generality\footnote{This can be done by replacing the (locally) semiconcave $\overline{V}(x,\varphi)$ by $W(x,\varphi):=\overline{V}(x,\varphi)-C_{0}|(x-x_{o},\varphi-\varphi_{o})|^{2}$ for suitable $C_0>0$ in the subsequent argument.}, that $\overline{V}$ is actually concave in a neighborhood $\mathcal{I}$ of $(x_{o},\varphi_{o})$.
Then, by concavity of $\overline{V}$ in $\mathcal{I}$, the right- and left-derivatives of $\overline{V}$ exist in the $\varphi$-direction at $(x_{o},\varphi_{o})$. 
We denote these derivatives by $\overline{V}_{\varphi}^{+}(x_{o},\varphi_{o})$ and $\overline{V}_{\varphi}^{-}(x_{o},\varphi_{o})$, respectively, and due to concavity they satisfy 
$\overline{V}_\varphi^{-}(x_{o},\varphi_{o}) \geq \overline{V}_{\varphi}^{+}(x_{o},\varphi_{o})$. 
Then, in order to show that $\overline{V}_{\varphi}$ exists, it suffices to show that the strict inequality $\overline{V}_\varphi^{-}(x_{o},\varphi_{o}) > \overline{V}_{\varphi}^{+}(x_{o},\varphi_{o})$ cannot hold. 
Aiming for a contradiction, we assume henceforth that $\overline{V}_\varphi^{-}(x_{o},\varphi_{o})>\overline{V}_{\varphi}^{+}(x_{o},\varphi_{o})$ does hold true.  

It follows from \cite[Theorem 23.4]{rock} and the fact that $\overline{V}_x$ exists and is continuous (cf.\ {\it Step 1} above) that there exist vectors
$$
{\zeta}:=(\overline{V}_x(x_{o},\varphi_{o}),\zeta_\varphi), \ {\eta}:=(\overline{V}_x(x_{o},\varphi_{o}),\eta_\varphi) \  
\in D^+\overline{V}(x_{o},\varphi_{o}) 
\quad \text{such that} \quad \zeta_\varphi < \eta_\varphi\,,
$$
where we denote by $D^+\overline{V}(x_{o},\varphi_{o})$ the superdifferential of $\overline{V}$ at $(x_{o},\varphi_{o})$. 
For any $(x,\varphi) \in \mathcal{I}$, we then define
$$g(x,\varphi):=\overline{V}(x_{o},\varphi_{o})+ \overline{V}_x(x_{o},\varphi_{o})(x-x_o)+\	\eta_{\varphi}(\varphi-\varphi_{o})\wedge \zeta_{\varphi}(\varphi-\varphi_{o}) $$
and notice that $\overline{V}(x_{o},\varphi_{o})=g(x_{o},\varphi_{o})$, while we also get by concavity that
$\overline{V}(x,\varphi)\leq g(x,\varphi)$, for all $(x,\varphi)\in\mathcal{I}$.
Next, we consider the sequence of functions $(f^{n})_{n\in\mathbb{N}}\subset C^2(\R\times (0,\infty); \R)$ defined by
$$
f^{n}(x,\varphi):= g(x,\varphi_o)+\tfrac{1}{2} (\eta_\varphi+\zeta_\varphi)(\varphi-\varphi_o)-\tfrac{n}{2} (\varphi-\varphi_o)^{2}, \quad \forall\; n\in\mathbb{N}.
$$
Such a sequence satisfies the following collection of properties, for any $n\in\mathbb{N}$:
\begin{equation*}
\begin{cases}
f^n(x_{o},\varphi_{o})=g(x_o,\varphi_o)=\overline{V}(x_{o},\varphi_{o}),  
\\  f^n\geq \overline{V}   \ \mbox{in a neighborhood of} \ (x_{o},\varphi_{o}),  
 \\  f^{n}_x(x_o,\varphi_{o})= \overline{V}_{x}(x_{o},\varphi_{o}), \ 
 f^{n}_{xx}(x_o,\varphi_{o})=0=f_{x\varphi}^{n}(x_o,\varphi_{o}), \ 
 f^{n}_{\varphi\varphi}(x_o,\varphi_{o})=-n .
\end{cases}
\end{equation*}
Then, using the viscosity subsolution property of $\overline{V}$ at $(x_o,\varphi_o)$ yields
$$
0\geq (\rho-{\mathcal{L}}){f^{n}}(x_{o},\varphi_{o})-(1+\varphi_{o})C(x_{o}) \stackrel{n\to\infty}{\longrightarrow } +\infty,
$$
which gives the desired contradiction. Hence, by arbitrariness of $(x_o,\varphi_o)$, we have that $\overline{V}$ is differentiable in the direction $\varphi$.

In view of the aforementioned differentiability in the direction $\varphi$ and the semiconcavity of $\overline{V}$ (cf.\ Proposition \ref{prop:semiconc}) we conclude from \cite[Theorem 25.5]{rock} that $\overline{V}_{\varphi}$ is continuous on $\R \times (0,\infty)$. 
\end{proof}

We are now ready to show the final result of this section, namely to upgrade the regularity of the control value function to the minimal required regularity for constructing a candidate optimal control policy and verify its optimality in Section \ref{sec:Verif}. 

To this end, we define for any $(x,\varphi) \in \R \times (0,\infty)$ the transformation
\begin{equation} \label{T}
T:=(T_1,T_2):\R\times(0,\infty)\to\R^2, \quad (T_1(x,\varphi),T_2(x,\varphi))=\big(x, x-\tfrac{\eta}{\gamma}\log(\varphi)\big),
\end{equation}
which is invertible with inverse given by
$T^{-1}(x,y)=(x,e^{\frac{\gamma}{\eta}(x-y)})$, for $(x,y)\in \R^2$.
Using the latter inverse transformation, we introduce the {\it transformed} version $\widehat{V}(x,y)$ of the value function $\overline{V}(x,\varphi)$ defined in \eqref{eq:Vbar} by  
\begin{equation}
\label{eq:Vhat}
\widehat{V}(x,y):=\overline{V} (x,e^{\frac{\gamma}{\eta} (x-y)}), \quad (x,y)\in\R^{2}.
\end{equation}
Moreover, direct calculations yield that
\begin{equation}
\label{Vhat-xy}
\widehat{V}_x(x,y) + \widehat{V}_y(x,y) = \overline{V}_x(x,e^{\frac{\gamma}{\eta} (x-y)}), \quad (x,y)\in\R^{2}.
\end{equation}
Given that $T:\R\times(0,\infty)\to\R^2$ is a global diffeomorphism, we have from \eqref{C2V} and \eqref{Vhat-xy} that the open set
\begin{equation}
\label{C3}
{\mathcal{C}_3} \hspace{-0.5mm}:= \hspace{-0.5mm}\big\{(x,y) \hspace{-0.5mm}\in \hspace{-0.5mm}\R^2 \hspace{-0.5mm}: \hspace{-0.5mm}-K^+ \hspace{-0.5mm}(1+e^{\frac{\gamma}{\eta}(x-y)}) \hspace{-0.5mm}< \hspace{-0.5mm}\big(\widehat{V}_x + \widehat{V}_y\big)(x,y) \hspace{-0.5mm}< \hspace{-0.5mm}K^- \hspace{-0.5mm}(1+e^{\frac{\gamma}{\eta}(x-y)}) \hspace{-0.5mm}\big\} \hspace{-0.5mm}= \hspace{-0.5mm}T({\mathcal{C}_2}).
\end{equation}

Finally, define the second-order linear differential operator on $f\in C^{2,1}(\R^2;\R)$ by 
\begin{equation}
\label{eq:LXY}
\mathcal{L}_{X,Y}f(x,y):= \tfrac{1}{2}\eta^{2}f_{{xx}}(x,y) + \mu_{0}f_{x}(x,y) + \tfrac{1}{2}(\mu_{0}+\mu_{1})f_{y}(x,y) 
\end{equation}

\begin{proposition}
\label{prop:regVhat}
The transformed value function $\widehat{V}$ defined in \eqref{eq:Vhat} is such that $\widehat{V}\in C^{1}(\R^{2};\R)$ and $\widehat{V}_{xx}\in L^{\infty}(\mathcal{C}_3;\R)$. In addition, $\widehat{V}$ is a classical solution to  
\begin{equation}
\label{eq:VhatinC3}
\big(\rho - \mathcal{L}_{X,Y}\big)u(x,y) =  C(x)(1+ e^{\frac{\gamma}{\eta} (x-y)}), 
\quad \text{for all}\,\, (x,y) \in {\mathcal{C}}_{3}. 
\end{equation}
\end{proposition}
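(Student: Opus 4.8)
The plan is to transport the already-established properties of $\overline V$ through the change of variables $T$ and then to upgrade regularity by parabolic PDE theory. The inputs are: $\overline V\in C^1(\R\times(0,\infty))$ (Proposition \ref{prop:barVC1}); convexity of $\overline V(\cdot,\varphi)$ and local semiconcavity of $\overline V$ (Propositions \ref{prop:Vbar}, \ref{prop:semiconc}); and the viscosity-solution property $(\rho-\mathcal L)\overline V=(1+\varphi)C(x)$ in $\mathcal C_2$ (Corollary \ref{cor:Vvisc}). The first assertion, $\widehat V\in C^1(\R^2;\R)$, is immediate: $T^{-1}(x,y)=(x,e^{\frac{\gamma}{\eta}(x-y)})$ is a $C^\infty$-diffeomorphism of $\R^2$ onto $\R\times(0,\infty)$ and $\widehat V=\overline V\circ T^{-1}$, so the claim (together with identity \eqref{Vhat-xy}) is just the chain rule applied to the $C^1$ function $\overline V$.

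For the equation I would first record that the generator transforms covariantly under $T$: using $\d X^0_t=\mu_0\,\d t+\eta\,\d W_t$ and $\d \Phi_t=\gamma\Phi_t\,\d W_t$ one computes $\d Y_t=\tfrac{\mu_0+\mu_1}{2}\,\d t$, hence $(\mathcal L f)(x,\varphi)=(\mathcal L_{X,Y}\hat f)(x,y)$ whenever $\hat f(x,y)=f(x,e^{\frac{\gamma}{\eta}(x-y)})$. Consequently the viscosity-solution property of $\overline V$ in $\mathcal C_2$ becomes that of $\widehat V$ for $(\rho-\mathcal L_{X,Y})u=C(x)(1+e^{\frac{\gamma}{\eta}(x-y)})$ on $\mathcal C_3=T(\mathcal C_2)$. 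The crucial gain is that $\mathcal L$ had a rank-one (degenerate) diffusion matrix, whereas $\mathcal L_{X,Y}=\tfrac{\eta^2}{2}\partial_{xx}+\mu_0\partial_x+\tfrac{\mu_0+\mu_1}{2}\partial_y$ is, reading $y$ as an evolution variable, uniformly parabolic in $x$ (an $x$-elliptic equation with parameter $y$ in the borderline case $\mu_0+\mu_1=0$). Since $C\in C^{1,\text{Lip}}_{\text{loc}}$, the right-hand side is locally H\"older, so interior parabolic $L^p$ and then Schauder estimates for linear equations upgrade the continuous viscosity solution $\widehat V$ to a $C^{2,1}_{\text{loc}}(\mathcal C_3)$ function; in particular $\widehat V_{xx}$ is continuous on $\mathcal C_3$ and \eqref{eq:VhatinC3} holds pointwise.

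It then remains to prove $\widehat V_{xx}\in L^\infty(\mathcal C_3)$. Having \eqref{eq:VhatinC3} in classical form, I would solve it for the second derivative, $\tfrac{\eta^2}{2}\widehat V_{xx}=\rho\widehat V-\mu_0\widehat V_x-\tfrac{\mu_0+\mu_1}{2}\widehat V_y-C(x)(1+e^{\frac{\gamma}{\eta}(x-y)})$, and control the right-hand side uniformly on $\mathcal C_3$ using: the inclusion $\mathcal C_3\subseteq[x_+^*,x_-^*]\times\R$ from Proposition \ref{prop:barb}.$(iii)$, so $x$ and $C(x)$ stay in a compact set along $\mathcal C_3$; the gradient constraint, which in $(x,y)$-coordinates is exactly the two-sided bound on $\widehat V_x+\widehat V_y$ displayed in \eqref{C3}; the convexity of $\overline V(\cdot,\varphi)$, giving $\overline V_{xx}\ge0$ and hence a one-sided bound on $\widehat V_{xx}$ up to the curvature correction produced by $T$; and the local semiconcavity of $\overline V$, giving the matching upper bound on compact pieces of $\mathcal C_3$. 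I expect the genuine difficulty — and the main obstacle of the proof — to be upgrading these local estimates to a bound uniform over the unbounded $y$-extent of $\mathcal C_3$: as $y\to\pm\infty$ the state process degenerates onto the full-information problems with drifts $\mu_1$ and $\mu_0$ respectively, and one must rule out blow-up of $\widehat V_{xx}$ along the two ends of the strip. I would close this by a maximum-principle / Bernstein-type argument, exploiting that $w:=\widehat V_{xx}$ satisfies (in the appropriate weak sense, mollifying $C$ if necessary) a linear parabolic equation on $\mathcal C_3$ obtained by differentiating \eqref{eq:VhatinC3} twice in $x$, whose boundary behaviour on $\partial\mathcal C_3$ is pinned down by the $C^1$-fit established above and whose barriers are furnished by the convexity and semiconcavity bounds, so that the comparison can be carried out globally.
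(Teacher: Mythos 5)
Your $C^1$ claim, the covariance of the generator under $T$, and the interior $C^{2,1}$ upgrade by parabolic theory all match the paper's proof; the paper likewise observes that $\widehat V$ is a continuous viscosity solution of \eqref{eq:VhatinC3} on $\mathcal C_3$ and invokes interior parabolic regularity (citing Liebermann) to make it classical. The divergence is in the last step. The paper's argument for $\widehat V_{xx}$ is much simpler than yours: it solves \eqref{eq:VhatinC3} algebraically for $\tfrac{\eta^2}{2}\widehat V_{xx}$, notes that the right-hand side $-C(x)(1+e^{\frac{\gamma}{\eta}(x-y)})+\rho\widehat V-\mu_0\widehat V_x-\tfrac{\mu_0+\mu_1}{2}\widehat V_y$ consists entirely of functions continuous on all of $\R^2$ (by the already-proved $\widehat V\in C^1(\R^2)$), and therefore $\widehat V_{xx}$ admits a continuous extension to $\overline{\mathcal C}_3$. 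That is the whole argument. No maximum principle, no Bernstein barriers, no appeal to convexity or semiconcavity.

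Your instinct that there is an issue as $y\to\pm\infty$ is not unreasonable — $\overline{\mathcal C}_3$ is an unbounded strip and the right-hand side does grow as $y\to-\infty$ through the factor $e^{\frac{\gamma}{\eta}(x-y)}$, so a continuous extension does not by itself give a global sup bound — but the remedy you propose is both incomplete and shakier than you acknowledge. The convexity of $\overline V(\cdot,\varphi)$ gives $\overline V_{xx}\ge 0$, but under $T$ one has $\widehat V_{xx}=\overline V_{xx}+\tfrac{2\gamma}{\eta}\varphi\,\overline V_{x\varphi}+\tfrac{\gamma^2}{\eta^2}\varphi^2\,\overline V_{\varphi\varphi}+\tfrac{\gamma^2}{\eta^2}\varphi\,\overline V_\varphi$ (with $\varphi=e^{\frac{\gamma}{\eta}(x-y)}$), so neither convexity nor local semiconcavity of $\overline V$ transfers to one-sided bounds on $\widehat V_{xx}$ without also controlling the cross-terms; "up to the curvature correction produced by $T$" is precisely where the argument would have to be made, and you do not make it. The Bernstein/maximum-principle closure is asserted, not carried out, and it is not obvious the barriers you describe exist on the full strip. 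In fact, what the verification theorem (and the analogous Proposition \ref{prop:reg}) actually needs and uses is local boundedness — the localization $\tau_n$ already caps $|(X^{\widehat P},Y^{\widehat P})|$ — which the paper's continuous-extension argument delivers immediately. So the missing idea in your proposal is not how to win the global bound but the observation that solving the equation for $\widehat V_{xx}$ and reading off continuity of the right-hand side already suffices.
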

\begin{proof}
First of all, due to Corollary \ref{cor:Vvisc} and the expression of the transformed value function in \eqref{eq:Vhat}, one can easily verify that $\widehat{V}$ is a viscosity solution to \eqref{eq:VhatinC3} on $\mathcal{C}_3$ due to \eqref{C3}. Then, in light of Proposition \ref{prop:barVC1} and the above smooth transformation, we also obtain that $\widehat{V}\in C^{1}(\R^{2};\R)$. 

By a standard localization argument based on the fact that $\widehat{V}$ is a continuously differentiable viscosity solution to \eqref{eq:VhatinC3} on $\mathcal{C}_{3}$ and results for Dirichlet boundary problems involving partial differential equations of parabolic type (see \cite{Liebermann}), we have that actually $\widehat{V}\in C^{2,1}(\mathcal{C}_{3};\R)$ and solves \eqref{eq:VhatinC3} on $\mathcal{C}_3$ in a classical sense. Hence, 
\begin{equation*}
\tfrac{1}{2}\eta^{2}\widehat{V}_{{xx}}(x,y) = 
- C(x)(1+ e^{\frac{\gamma}{\eta} (x-y)}) + \rho \widehat{V}(x,y) 
- \mu_{0}\widehat{V}_{x}(x,y) - \tfrac{1}{2}(\mu_{0}+\mu_{1})\widehat{V}_{y}(x,y),
\end{equation*}
for all $(x,y) \in \mathcal{C}_3$. However, since we know that $\widehat{V}\in C^{1}(\R^{2};\R)$ and the right-hand side of the above equation only involves continuous functions on $\R^2$, we conclude that $\widehat{V}_{{xx}}$ admits a continuous extension on $\overline{\mathcal{C}}_{3}$ (where $\overline{\mathcal{C}}_{3}$ denotes the closure of $\mathcal{C}_3$), so that $\widehat{V}_{xx}\in L^{\infty}(\mathcal{C}_3;\R)$. This completes the proof of the claim.
\end{proof}

\section{Verification Theorem and Optimal Control}
\label{sec:Verif}

Given the regularity of $\widehat{V}$ obtained in Proposition \ref{prop:regVhat} and the relation \eqref{eq:Vhat} between $\widehat{V}$ and $\overline{V}$ defined in \eqref{eq:Vbar}, we are now able to prove a verification theorem.  
Namely, in what follows, we provide the optimal control for $\overline{V}$ in terms of the boundaries $b_\pm$ defined in \eqref{eq:b+}. 
Before we commence the analysis, recall also the properties of $b_\pm$ proved in Proposition \ref{prop:barb}.

\subsection{Construction of control $\widehat{P}$ for state-space process $(X^{\widehat{P}},\Phi)$} 
\label{Control}

For any given $(x,\varphi)\in \R \times (0,\infty)$, we define the admissible control strategy $\widehat{P}:=\widehat{P}^+ - \widehat{P}^-$ such that the following couple of properties hold true $\Q$-a.s:
\begin{equation} 
\label{PXPhi0}
\begin{cases} 
b_{+}(\Phi_t) \leq X^{\widehat{P}}_t \leq b_{-}(\Phi_t), 
\ \text{for almost all}\,\, t\geq 0; \\
\displaystyle 
\widehat{P}^+_t = \scaleobj{.8}{\int_{[0,t]}} \mathds{1}_{\{X^{\widehat{P}}_{{s-}} \leq b_{+}(\Phi_s)\}} \d \widehat{P}^+_s , 
\quad \widehat{P}^-_t = \scaleobj{.8}{\int_{[0,t]}} \mathds{1}_{\{X^{\widehat{P}}_{{s-}} \geq b_{-}(\Phi_s)\}} \d \widehat{P}^-_s, \ \forall t \geq 0; \\
\displaystyle \scaleobj{.7}{\int_{0}^{\Delta \widehat{P}^+_t}} \mathds{1}_{\{(X^{\widehat{P}}_{t-}+ z, \Phi_t) \in \mathcal{C}_2\}} \d z + \scaleobj{.7}{\int_{0}^{\Delta \widehat{P}^-_t }} \mathds{1}_{\{(X^{\widehat{P}}_{t-} - z, \Phi_t) \in \mathcal{C}_2\}} \d z = 0, \ \forall t \geq 0, 
\end{cases}
\end{equation}
where $\Delta \widehat{P}^\pm_{t}:=\widehat{P}^\pm_{t}-\widehat{P}^\pm_{t-}$.

In practice, according to the aforementioned strategy, a lump-sum increase or decrease of the inventory process $X$ may be required, whenever the inventory level $X_{t-}$ happens to be either strictly below the boundary $b_+(\Phi_t)$ or above $b_-(\Phi_t)$, respectively. 
The purpose of these {\it jumps} of at most one of the controls $\widehat{P}_t^\pm$ at each such $t \geq 0$, of size either $(b_+(\Phi_t) - X^{\widehat{P}}_{t-})^+$ or $(X^{\widehat{P}}_{t-} - b_-(\Phi_t))^+$, is to bring immediately the inventory level $X_{t}$ inside the interval $[b_+(\Phi_{t}), b_-(\Phi_{t})]$. 
Mathematically, these are the actions caused at any time $t \geq 0$, by the jump parts $\Delta \widehat{P}^\pm_t$ of the controls $\widehat{P}^\pm$.
The strategy further prescribes taking action (increase or decrease the inventory) when the inventory process $X_t$ approaches, at any time $t \geq 0$, either boundary $b_+(\Phi_t)$ from above or $b_-(\Phi_t)$ from below. 
The purpose of these actions now is to make sure (with a minimal effort) that the inventory level $X_t$ is kept inside the interval $[b_+(\Phi_t), b_-(\Phi_t)]$. 
Mathematically, these actions are caused by the continuous parts of the respective controls $\widehat{P}^\pm$ and are the so-called {\it Skorokhod reflection-type} policies.

The nonincreasing property of $b_\pm(\cdot)$ (see Proposition \ref{prop:barb}.$(i)$) further implies that, the stronger the decision makers' belief is about a high average inventory level $\mu$ (i.e. higher $\varphi$, cf.\ \eqref{dynPhi0}), 
they tend to unload part of excess inventory more often so that inventory is kept below the optimal level $b_-(\varphi)$,
and delay placing replenishment orders by setting a lower optimal base-stock level $b_+(\varphi)$.

In multi-dimensional settings, the construction of a solution to a Skorokhod reflection problems is usually a delicate task, that is intimately related to the regularity of the reflection boundary (see \cite{DianettiFerrari} and \cite{Kruk} for a discussion and literature review). In our case, given that the dynamics of $X^{P}$ and $\Phi$ are decoupled and that $X^{P}=X^0 + P$ (cf.\ \eqref{eq:XPhiP}), the solution triplet $(X^{\widehat{P}}_t, \Phi_t, \widehat{P}_t)_{t\geq0}$ to the Skorokhod reflection problem at the boundaries $b_\pm$ can be constructed by adapting the iterative procedure developed in \cite[Section 4.3]{Federico2014}. In particular, with reference to the notation adopted in \cite{Federico2014}, we define
$\tau_0^+:=\inf\{t\geq0:\, x < b_+(\Phi_t) - \mu_0 t - \eta W_t\}$, 
$\tau_0^-:=\inf\{t\geq0:\, x > b_-(\Phi_t) - \mu_0 t - \eta W_t\}$ and 
$\tau_0:=\tau_0^+ \wedge \tau_0^-$.  
Notice that, because $\inf_{t\geq0}\big( b_-(\Phi_t) - b_+(\Phi_t) \big) >0$ by Proposition \ref{prop:barb}.(iii), we have $\{ \tau_0^+ = \tau_0^-\}=\{\tau_0=\infty\}$. Then, we set $\Omega_{\infty}:=\{\tau_0=\infty\}$, $\Omega_+:=\{\tau_0^+ < \tau_0^-\}$, $\Omega_-:=\{\tau_0^- < \tau_0^+\}$ and $C^0_t:= x$, for all $t\geq 0$, and recursively introduce: 
\begin{align*} 
\text{If $k\geq 1$ is odd,} \quad C^k_t &:=
\begin{cases} 
x, \,\, & \text{on}\,\, \Omega_{\infty}, \\
x + \max_{s \in [\tau_{k-1},t]}\big(b_+(\Phi_s) - \mu_0 s - \eta W_s - x )^+, \,\, & \text{on}\,\, \Omega_{+},\\
x + \min_{s \in [\tau_{k-1},t]}\big(b_-(\Phi_s) - \mu_0 s - \eta W_s - x )^-, \,\, & \text{on}\,\, \Omega_{-}, 
\end{cases} \\
\text{with} \quad \tau_k &:=
\begin{cases} 
\infty, \,\, & \text{on}\,\,\Omega_{\infty}, \\
\inf\{t\geq \tau_{k-1}:\, C^k_t > b_-(\Phi_t) - \mu_0 t -\eta W_t\},\,\, & \text{on}\,\, \Omega_{+},\\
\inf\{t\geq \tau_{k-1}:\, C^k_t < b_+(\Phi_t) - \mu_0 t -\eta W_t\},\,\, & \text{on}\,\, \Omega_{-}.
\end{cases} \\
\text{If $k\geq 2$ is even,} \quad C^k_t &:=
\begin{cases} 
x, \,\, & \text{on}\,\, \Omega_{\infty}, \\
x + \max_{s \in [\tau_{k-1},t]}\big(b_+(\Phi_s) - \mu_0 s - \eta W_s - x )^+, \,\, & \text{on}\,\, \Omega_{-},\\
x + \min_{s \in [\tau_{k-1},t]}\big(b_-(\Phi_s) - \mu_0 s - \eta W_s - x )^-, \,\, & \text{on}\,\, \Omega_{+},
\end{cases} \\
\text{with} \quad \tau_k &:=
\begin{cases} 
\infty, \,\, & \text{on}\,\, \Omega_{\infty}, \\
\inf\{t\geq \tau_{k-1}:\, C^k_t > b_-(\Phi_t) - \mu_0 t -\eta W_t\},\,\, & \text{on}\,\, \Omega_{-},\\
\inf\{t\geq \tau_{k-1}:\, C^k_t < b_+(\Phi_t) - \mu_0 t -\eta W_t\},\,\, & \text{on}\,\, \Omega_{+}.
\end{cases}
\end{align*}
In light of these definitions, one can then proceed as in \cite[Section 4.3]{Federico2014} in order to conclude the existence of a solution to the reflection problem \eqref{PXPhi0}.

It then follows from \eqref{PXPhi0} above together with the definitions \eqref{eq:b+} 
of boundaries $b_\pm$, the region ${\mathcal{C}_2}$ from \eqref{C2b} and the fact that $\bar{v}=\overline{V}_x$ from Proposition \ref{prop:Vbar}.$(ii)$, that the nondecreasing processes $\widehat{P}^{\pm}$ are such that the state-space process $(X^{\widehat{P}},\Phi)$ and the induced (random) measures $\d \widehat{P}^{\pm}$ on $\R^+$ satisfy:
\begin{equation} \label{PXPhi}
\begin{cases} 
(X^{\widehat{P}}_t,\Phi_t) \in \overline{\mathcal{C}_2}, \quad \text{for $\Q \otimes \d t$-\text{a.e.}, with $\mathcal{C}_2$ as in \eqref{C2b}}; \\
\d \widehat{P}^{+} \text{ has support on } \big\{t\geq0:\, \overline{V}_x(X^{\widehat{P}}_t,\Phi_t) \leq -K^+(1+\Phi_t)\big\}; \\
\d \widehat{P}^{-} \text{ has support on } \big\{t\geq0:\, \overline{V}_x(X^{\widehat{P}}_t,\Phi_t) \geq K^-(1+\Phi_t)\big\} .
\end{cases}
\end{equation}

\subsection{Transformation of controlled process $(X^{\widehat{P}},\Phi)$ to $(X^{\widehat{P}},Y^{\widehat{P}})$} 
\label{TransY}

We now use the transformation \eqref{T} from $(x,\varphi)$- to $(x,y)$-coordinates, in order to define the controlled process 
\begin{equation}
\label{Y=XPhi}
Y^{\widehat{P}}_t:=X^{\widehat{P}}_t - \tfrac{\eta}{\gamma}\log(\Phi_t), \quad t\geq0.
\end{equation}
Recalling the transformed value function \eqref{eq:Vhat} and the relation in \eqref{Vhat-xy}, we have  
\begin{equation*}
\widehat{V}(X^{\widehat{P}}_t, Y^{\widehat{P}}_t):=\overline{V} \big(X^{\widehat{P}}_t,e^{\frac{\gamma}{\eta} (X^{\widehat{P}}_t-Y^{\widehat{P}}_t)} \big) 
, \; 
(\widehat{V}_x + \widehat{V}_y) (X^{\widehat{P}}_t, Y^{\widehat{P}}_t) 
= \overline{V}_x\big(X^{\widehat{P}}_t ,e^{\frac{\gamma}{\eta} (X^{\widehat{P}}_t-Y^{\widehat{P}}_t)}\big) ,
\end{equation*}
under the dynamics
\begin{equation}
\label{eq:XYP}
\begin{cases}
\d X^{\widehat{P}}_t = \mu_0\d t + \eta \d W_t+\d {\widehat{P}}_t^+-\d {\widehat{P}}_t^-, \quad &X^{\widehat{P}}_{0-}=x \in \R,\\
\d Y^{\widehat{P}}_t = \frac{1}{2}(\mu_0+\mu_1) \d t + \d \widehat{P}^+_t - \d \widehat{P}^-_t, \quad &Y^{\widehat{P}}_{0-}= y := x - \frac{\eta}{\gamma}\log(\varphi) \in \R.
\end{cases}
\end{equation}
Hence, we can express the control $\widehat{P}$ defined in Section \ref{Control} in terms of the state-space process $(X^{\widehat{P}},Y^{\widehat{P}})$ via 
\begin{equation} \label{PXY}
\begin{cases}
(X^{\widehat{P}}_t,Y^{\widehat{P}}_t) \in \overline{\mathcal{C}_3}, \quad \text{for $\Q \otimes \d t$-\text{a.e.}, where $\mathcal{C}_3$ is defined in \eqref{C3}}; \\
\d \widehat{P}^{+} \text{ has support on } \big\{t\geq0: \big(\widehat{V}_x + \widehat{V}_y\big)(X^{\widehat{P}}_t,Y^{\widehat{P}}_t) \leq -K^+\big(1+e^{\frac{\gamma}{\eta} (X^{\widehat{P}}_t-Y^{\widehat{P}}_t)}\big)\big\}; \\ 
\d \widehat{P}^{-} \text{ has support on } \big\{t\geq0: \big(\widehat{V}_x + \widehat{V}_y\big)(X^{\widehat{P}}_t,Y^{\widehat{P}}_t) \geq K^-\big(1+e^{\frac{\gamma}{\eta} (X^{\widehat{P}}_t-Y^{\widehat{P}}_t)})\big)\big\}.
\end{cases} 
\end{equation}

\subsection{Optimality of control $\widehat{P}$} 
\label{OptControl}

In this section we prove the optimality of the control $\widehat{P}$ defined through \eqref{PXPhi0}, which is equivalently expressed by \eqref{PXPhi} in terms of the state-space process $(X^{\widehat{P}},\Phi)$ and by \eqref{PXY} in terms of the state-space process $(X^{\widehat{P}},Y^{\widehat{P}})$, see Sections \ref{Control}--\ref{TransY}.

\begin{theorem}[Verification Theorem]
\label{thm:Verif}
The admissible control $\widehat{P}\in{\mathcal{A}}$ defined through \eqref{PXPhi0} (see also \eqref{PXPhi} and \eqref{PXY}) is optimal for Problem \eqref{eq:Vbar}. Actually, $\widehat{P}$ is the unique optimal control (up to indistinguishability) if $C$ is strictly convex.
\end{theorem}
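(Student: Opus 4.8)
The strategy is the classical verification argument adapted to the transformed coordinates, exploiting the regularity of $\widehat{V}$ from Proposition~\ref{prop:regVhat}. First I would fix an arbitrary admissible control $P\in\mathcal{A}$ and the associated state process $(X^P_t, Y^P_t)$ solving the analogue of \eqref{eq:XYP}. Since $\widehat{V}\in C^1(\R^2;\R)$ with $\widehat{V}_{xx}$ locally bounded (indeed bounded on $\mathcal{C}_3$, and the equation \eqref{eq:VhatinC3} only needs to hold there), I would apply a generalised It\^o--Tanaka/Meyer--It\^o formula to $e^{-\rho t}\widehat{V}(X^P_t, Y^P_t)$. The point of working with the $(x,y)$-coordinates is precisely that $\mathcal{L}_{X,Y}$ from \eqref{eq:LXY} has \emph{no} second-order term in $y$ and no mixed term, so only $\widehat{V}_{xx}$ (which we control) enters the It\^o correction; the $C^1$-regularity handles the first-order terms and the bounded-variation increments $\d P^\pm$. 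Splitting the increments of $P^\pm$ into continuous and jump parts, the continuous part contributes $(\widehat{V}_x+\widehat{V}_y)(X^P_t,Y^P_t)\,\d(P^{+,c}_t - P^{-,c}_t)$ because a unit increase of $P^+$ moves both $X$ and $Y$ by one unit (cf.\ \eqref{eq:XYP}), and the jump part contributes a telescoping difference $\widehat{V}(X^P_{t-}+\Delta P^+_t - \Delta P^-_t, Y^P_{t-}+\Delta P^+_t-\Delta P^-_t) - \widehat{V}(X^P_{t-},Y^P_{t-})$, which by the fundamental theorem of calculus equals an integral of $(\widehat{V}_x+\widehat{V}_y)$ along the jump.

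Next I would invoke the HJB variational inequality \eqref{HJB} (equivalently its $(x,y)$-version, valid in the viscosity and hence, by the regularity, classical/a.e.\ sense): the term $(\rho-\mathcal{L}_{X,Y})\widehat{V} - C(x)(1+e^{\frac{\gamma}{\eta}(x-y)})$ is $\le 0$ everywhere, and the gradient terms give $-K^+(1+e^{\frac{\gamma}{\eta}(x-y)}) \le (\widehat{V}_x+\widehat{V}_y)(x,y) \le K^-(1+e^{\frac{\gamma}{\eta}(x-y)})$ everywhere. Feeding these inequalities into the It\^o expansion, together with a standard localisation using a sequence of stopping times $\tau_n\uparrow\infty$ and the growth estimates on $\widehat{V}$ (from Proposition~\ref{prop:semiconc}, the polynomial growth of $C$ in Assumption~\ref{ass:C}.(i), and the fact that $(1+\Phi_t)$ is a $\Q$-martingale so $\E^\Q[(1+\Phi_t)|X^P_t|^p]$ and related quantities are finite and the discounted expectations vanish in the limit), yields
\begin{equation*}
\overline{V}(x,\varphi) = \widehat{V}(x,y) \le \overline{\mathcal{J}}_{x,\varphi}(P),
\end{equation*}
i.e.\ the lower-bound half of verification. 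The usual care is needed to justify that the stochastic integral terms are true martingales (or at least that their expectations vanish after localisation) using $\widehat{V}_x$ continuous with controlled growth.

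For the reverse inequality I would take $P = \widehat{P}$, the control constructed in Section~\ref{Control} via the Skorokhod reflection, and verify that every inequality used above becomes an \emph{equality} $\Q$-a.s. This is where \eqref{PXPhi}/\eqref{PXY} are used: by construction $(X^{\widehat{P}}_t, Y^{\widehat{P}}_t)\in\overline{\mathcal{C}_3}$ for a.e.\ $t$, so on the support of $\d t$ the PDE \eqref{eq:VhatinC3} holds with equality (using that $\widehat{V}$ solves it on $\mathcal{C}_3$ and by continuity on the closure — here Proposition~\ref{prop:regVhat}'s statement that $\widehat{V}_{xx}$ extends continuously to $\overline{\mathcal{C}_3}$ is essential); on the support of $\d\widehat{P}^+$ we have $(\widehat{V}_x+\widehat{V}_y) = -K^+(1+e^{\frac{\gamma}{\eta}(x-y)})$ and on that of $\d\widehat{P}^-$ we have $(\widehat{V}_x+\widehat{V}_y) = K^-(1+e^{\frac{\gamma}{\eta}(x-y)})$; and for the jump parts, the jump of $\widehat{P}^\pm$ at $t$ brings the state from outside into $\overline{\mathcal{C}_3}$ while staying in the closed stopping region along the whole segment, so the integrated gradient term again matches $\pm K^\pm$ times the jump size (this is the content of the last line of \eqref{PXPhi0}). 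Hence $\widehat{V}(x,y) = \overline{\mathcal{J}}_{x,\varphi}(\widehat{P})$, proving $\widehat{P}$ optimal. The uniqueness-up-to-indistinguishability claim when $C$ is strictly convex then follows immediately from Proposition~\ref{prop:existence} (equivalently Proposition~\ref{prop:Vbar}.(i)), which already gives uniqueness of the optimiser.

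\textbf{Main obstacle.} The delicate point is not the algebra of the It\^o expansion but (a) justifying the generalised It\^o formula given only $\widehat{V}\in C^1$ with $\widehat{V}_{xx}\in L^\infty(\mathcal{C}_3)$ — one must argue that $\widehat{V}$ is, say, $W^{2,\infty}_{\text{loc}}$ or handle the boundary $\partial\mathcal{C}_3$ (where second derivatives may jump) via an approximation/mollification argument or the Meyer--It\^o formula, noting that $\partial\mathcal{C}_3$ has zero occupation measure for the process because the $x$-component is a non-degenerate diffusion; and (b) the integrability/localisation bookkeeping needed to pass $\tau_n\uparrow\infty$, which relies on combining the semiconcavity growth bound on $\overline{V}$ with Assumption~\ref{ass:C} and the moment bounds for $(X^P,\Phi)$ under $\Q$. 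Everything else is the standard verification template.
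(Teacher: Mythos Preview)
Your plan follows the standard \emph{two-sided} verification template: apply It\^o to $e^{-\rho t}\widehat{V}(X^P_t,Y^P_t)$ for an \emph{arbitrary} admissible $P$ to get $\overline{V}\le\overline{\mathcal{J}}_{x,\varphi}(P)$, then check equalities for $\widehat{P}$. The paper's route is different and more economical: since $\overline{V}$ is \emph{defined} as the infimum in \eqref{eq:Vbar}, the inequality $\overline{V}(x,\varphi)\le\overline{\mathcal{J}}_{x,\varphi}(P)$ is free, and the only work is to show $\overline{V}(x,\varphi)\ge\overline{\mathcal{J}}_{x,\varphi}(\widehat{P})$. The paper therefore applies the (mollified) It\^o formula \emph{only} along $(X^{\widehat{P}},Y^{\widehat{P}})$, which by construction lives in $\overline{\mathcal{C}_3}$ (indeed in $\mathcal{C}_3$ for each fixed $t$ via Lemma~\ref{lemma-app}), uses only the equality \eqref{eq:VhatinC3} and the support properties \eqref{PXY}, drops the nonnegative terminal term, and passes to the limit by Fatou. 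The case $(x,\varphi)\notin\overline{\mathcal{C}_2}$ is handled separately by the initial jump.

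This difference is not cosmetic. Your first half requires applying It\^o \emph{globally} and invoking $(\rho-\mathcal{L}_{X,Y})\widehat{V}\le(1+e^{\frac{\gamma}{\eta}(x-y)})C(x)$ a.e.\ on $\R^2$, hence you need $\widehat{V}_{xx}$ to make sense (at least in $L^\infty_{\text{loc}}$) on the stopping regions $\mathcal{S}_3^\pm$ as well. But Proposition~\ref{prop:regVhat} only gives $\widehat{V}_{xx}\in L^\infty(\mathcal{C}_3)$. In $\mathcal{S}_2^+$ one has $\overline{V}(x,\varphi)=\overline{V}(b_+(\varphi),\varphi)+K^+(1+\varphi)(b_+(\varphi)-x)$, and after the change of variables $\widehat{V}_{xx}$ involves $\overline{V}_{\varphi\varphi}$, which in turn depends on second-order regularity of $b_+$ --- not available at this stage of the paper (only monotonicity/one-sided continuity is known; Lipschitz continuity of $c_\pm$ comes \emph{after} the verification). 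So your ``main obstacle (a)'' is not merely a boundary issue at $\partial\mathcal{C}_3$; it is a genuine missing regularity in the interior of $\mathcal{S}_3^\pm$. The paper's one-sided argument sidesteps this entirely because the process under $\widehat{P}$ never leaves $\overline{\mathcal{C}_3}$. Your uniqueness remark via Proposition~\ref{prop:Vbar}.(i) is fine and matches the paper.
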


\begin{proof}
Let $(X^{\widehat{P}}_{0-},Y^{\widehat{P}}_{0-})=(x,y) \equiv (x, x -{\eta}\log(\varphi)/\gamma) \in \overline{\mathcal{C}_3}$ be given and fixed. 
Define $\tau_n:=\inf \{t \geq 0: |(X^{\widehat{P}}_t,Y^{\widehat{P}}_t)| > n \} \wedge n$, for $n\in \mathbb{N}$, with state-space process $(X^{\widehat{P}},Y^{\widehat{P}})$ as in \eqref{eq:XYP}, and recall that $(X^{\widehat{P}}_t,Y^{\widehat{P}}_t) \in \overline{\mathcal{C}_3}$, $\Q$-a.s.\ for all $t\geq 0$. 
In particular, Lemma \ref{lemma-app} in Appendix \ref{C} yields that for any $t\geq0$, $\Q\big((X^{\widehat{P}}_t,\Phi_t) \in \mathcal{C}_2\big)=1$, and therefore $\Q\big((X^{\widehat{P}}_t,Y^{\widehat{P}}_t) \in \mathcal{C}_3\big)=1$.
Then, given the regularity of $\widehat{V}$ (cf.\ Proposition \ref{prop:regVhat}), we can employ the approximation argument via mollifiers developed in the proof of \cite[Theorem 4.1, Chapter VIII]{FlemingSoner2006}, in order to conclude that
\begin{align*}
&\widehat{V}(x,y) = \E^{\Q}\bigg[e^{-\rho \tau_n}\widehat{V}(X^{\widehat{P}}_{\tau_n},Y^{\widehat{P}}_{\tau_n})\bigg] - \E^{\Q}\bigg[\int_{0}^{\tau_n} \hspace{-3mm}e^{-\rho s}\big(\mathcal{L}_{X,Y}-\rho\big)\widehat{V}(X^{\widehat{P}}_s,Y^{\widehat{P}}_s) \d s\bigg] \nonumber \\
& - \E^{\Q}\bigg[\int_{0}^{\tau_n} \hspace{-3mm}e^{-\rho s} \big(\widehat{V}_x+ \widehat{V}_y\big)(X^{\widehat{P}}_s,Y^{\widehat{P}}_s) \d \widehat{P}^c_s 
- \sum_{0\leq s \leq \tau_n} \hspace{-3mm}e^{-\rho s}\Big(\widehat{V}(X^{\widehat{P}}_s,Y^{\widehat{P}}_s) - \widehat{V}(X^{\widehat{P}}_{s-},Y^{\widehat{P}}_{s-})\Big)\bigg], 
\end{align*}
where $\widehat{P}^c$ denotes the continuous part of $\widehat{P}$ and the final sum is non-zero only for (at most countably many) times $s$ such that
$\Delta \widehat{P}_s:=\widehat{P}_s - \widehat{P}_{s-} \neq 0$. 
Clearly, $\Delta \widehat{P}_s = \Delta \widehat{P}^+_s - \Delta \widehat{P}^-_s$, 
where $\Delta \widehat{P}^{\pm}_s:=\widehat{P}^{\pm}_s - \widehat{P}^{\pm}_{s-}$ 
and notice that 
\begin{align*} 
\sum_{0\leq s \leq \tau_n} \hspace{-3mm}e^{-\rho s} \bigg\{ 
\big(\widehat{V}(X^{\widehat{P}}_s,Y^{\widehat{P}}_s) - \widehat{V}(X^{\widehat{P}}_{s-},&Y^{\widehat{P}}_{s-})\big) 
- \scaleobj{0.8}{\int_0^{\Delta \widehat{P}^+_s}} \hspace{-1mm}\big(\widehat{V}_x + \widehat{V}_y\big)(X^{\widehat{P}}_{s-} + u, Y^{\widehat{P}}_{s-} + u) \d u \nonumber\\ 
&+ \scaleobj{0.8}{\int_0^{\Delta \widehat{P}^-_s}} \hspace{-1mm}\big(\widehat{V}_x + \widehat{V}_y\big)(X^{\widehat{P}}_{s-} - u, Y^{\widehat{P}}_{s-} - u) \d u \bigg\} = 0.
\end{align*}
Hence, plugging the last formula into the penultimate one and using \eqref{eq:VhatinC3}, the nonnegativity of $\widehat{V}$, the second and third property of control $\widehat{P}$ in \eqref{PXY}, we see that 
\begin{align}
&\widehat{V}(x,y) 
\geq \E^{\Q}\bigg[\int_{0}^{\tau_n}e^{-\rho s} \big(1+e^{\frac{\gamma}{\eta} (X^{\widehat{P}}_s-Y^{\widehat{P}}_s)}\big) C(X^{\widehat{P}}_s) \d s \bigg] \nonumber \\
& + \E^{\Q}\bigg[\int_{0}^{\tau_n}e^{-\rho s} K^+ \big(1+e^{\frac{\gamma}{\eta} (X^{\widehat{P}}_s-Y^{\widehat{P}}_s)}\big) \d \widehat{P}^{+}_s + \int_{0}^{\tau_n}e^{-\rho s} K^- \big(1+e^{\frac{\gamma}{\eta} (X^{\widehat{P}}_s-Y^{\widehat{P}}_s)}\big) \d \widehat{P}^{-}_s\bigg] . \nonumber
\end{align}
Then, we take limits as $n\uparrow \infty$ and we invoke Fatou's lemma (given the nonnegativity of all the integrands above) to find that
\begin{align*}
&\widehat{V}(x,y) 
\geq \E^{\Q}\bigg[\int_{0}^{\infty}e^{-\rho s} \big(1+e^{\frac{\gamma}{\eta} (X^{\widehat{P}}_s-Y^{\widehat{P}}_s)}\big) C(X^{\widehat{P}}_s) \d s \bigg] 
\nonumber\\
&+ \E^{\Q}\bigg[\int_{0}^{\infty}e^{-\rho s} K^+ \big(1+e^{\frac{\gamma}{\eta} (X^{\widehat{P}}_s-Y^{\widehat{P}}_s)}\big) \d \widehat{P}^{+}_s + \int_{0}^{\infty}e^{-\rho s} K^- \big(1+e^{\frac{\gamma}{\eta} (X^{\widehat{P}}_s-Y^{\widehat{P}}_s)}\big) \d \widehat{P}^{-}_s\bigg] .
\end{align*}
Given now that $X^{\widehat{P}} - Y^{\widehat{P}} = {\eta}\log(\Phi)/{\gamma}$ by definition \eqref{Y=XPhi}, and that \eqref{eq:Vhat} yields $\widehat{V}(x,y) = \widehat{V}(x,x - {\eta}\log(\varphi)/{\gamma}) = \overline{V}(x,\varphi)$, we further conclude from the latter inequality that for any $(x,\varphi) \in \overline{\mathcal{C}_2}$ (as we assumed $(x,y) \equiv (x, x -{\eta}\log(\varphi)/{\gamma}) \in \overline{\mathcal{C}_3}$)
\begin{equation}
\label{eq:JhatVhat}
\overline{V}(x,\varphi) \geq \E^{\Q}\bigg[\int_{0}^{\infty} \hspace{-3mm}e^{-\rho s} \big(1 + \Phi_s\big) C(X^{\widehat{P}}_s) \d s + \int_{0}^{\infty} \hspace{-3mm}e^{-\rho s} \big(1 + \Phi_s\big)\big(K^+ \d \widehat{P}^{+}_s \hspace{-0.5mm}+ \hspace{-0.5mm}K^- \d \widehat{P}^{-}_s\big)\bigg]. 
\end{equation}
Combining this inequality with definition \eqref{eq:Vbar}, i.e.\ $\overline{V}(x,\varphi) \leq \overline{\mathcal{J}}_{x,\varphi}(\widehat{P})$, we prove that $\widehat{P}$ is an optimal control, for any $(x,\varphi) \in \overline{\mathcal{C}_2}$.

Suppose now that $(x,\varphi)$ is such that $x < b_+(\varphi)$, so that $(x,\varphi) \in \mathcal{S}^+_2$. Then, according to \eqref{PXPhi0} (see also \eqref{PXPhi}), and using \eqref{eq:JhatVhat}, we have that
\begin{align*}
\overline{\mathcal{J}}_{x,\varphi}(\widehat{P}) 
&= K^+(1+\varphi)\big(b_+(\varphi) - x) + \overline{\mathcal{J}}_{b_+(\varphi),\varphi}(\widehat{P}) \\
&\leq \overline{V}(b_+(\varphi),\varphi) - \int_x^{b_+(\varphi)} \overline{V}_x(z,\varphi) = \overline{V}(x,\varphi).
\end{align*}
Proceeding similarly also for $(x,\varphi)$ such that $x > b_-(\varphi)$, we conclude that $\widehat{P}$ is indeed optimal for any $(x,\varphi) \in \R^2$. 
\end{proof}

\section{Refined Regularity of the Free Boundaries and their Characterization}
\label{sec:FBchar}

In this section we will obtain substantial regularity of the value $\bar v(x,\varphi)$ of the Dynkin game \eqref{eq:barv}, as well as an analytical characterisation of its corresponding free boundaries $b_\pm$, and consequently the optimal control rule $\widehat{P}$ (see Theorem \ref{thm:Verif}).

\subsection{Parabolic formulation and Lipschitz continuity of the free boundaries} 
\label{sec:ChangeVar}

In view of a further change of variables, in line with \eqref{Y=XPhi}, we define
$Y^0_t:=X^0_t - \frac{\eta}{\gamma}\log(\Phi_t)$, $t \geq 0$,
with $X^0$ as in \eqref{XoPhi}. Then, by It\^o's formula, we have
\begin{equation} \label{X0Y0}
\begin{cases}
\d X^{0}_t = \mu_0\d t + \eta \d W_t , \quad &X^{0}_{0}=x \in \R,\\
\d Y^{0}_t = \frac{1}{2}(\mu_0+\mu_1) \d t , \quad &Y^{0}_{0}= y := x - \frac{\eta}{\gamma}\log(\varphi) \in \R,
\end{cases}
\end{equation}
and \eqref{eq:barv} rewrites in terms of the new coordinates $(x,y)=(X^0_0,Y^0_0)$ as 
\begin{align}
\label{eq:vhat}
\hat{v}(x,y):= \inf_{\sigma}\sup_{\tau} \E^{\Q}
&\bigg[\int_0^{\tau\wedge\sigma} \hspace{-3mm}e^{-\rho t}\Big(1+e^{\frac{\gamma}{\eta}(X^0_t-Y_t)}\Big) C'(X_t^0)\d t 
- e^{-\rho \tau} \Big(1+e^{\frac{\gamma}{\eta}(X^0_\tau-Y_\tau)}\Big) \times  \nonumber \\ 
&K^+ \mathds{1}_{\{\tau<\sigma\}}
+e^{-\rho \sigma}\Big(1+e^{\frac{\gamma}{\eta}(X_\sigma^0 -Y_\sigma)}\Big)  K^-\mathds{1}_{\{\tau>\sigma\}}\bigg] 
= \bar{v}\left(x,e^{\frac{\gamma}{\eta}(x-y)}\right) 
\end{align}
for $(x,y) \in \R^2$.
In view of the relationship in \eqref{eq:vhat}, the value function $\hat{v}(\cdot,\cdot)$ inherits important properties which have already been proved for $\bar{v}(\cdot,\cdot)$.  To be more precise, we first conclude immediately from Proposition \ref{prop:barv}.$(i)$ the following result.
\begin{proposition}
\label{prop:hatv}
The value function $(x,y) \mapsto \hat{v}(x,y)$ defined in \eqref{eq:vhat} is continuous over $\R^2$.
\end{proposition}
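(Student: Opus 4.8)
The plan is to deduce continuity of $\hat v$ directly from the continuity of $\bar v$, using the fact that the change of variables linking the two value functions is a homeomorphism. Recall the relationship stated in \eqref{eq:vhat}, namely $\hat v(x,y)=\bar v\big(x,e^{\frac{\gamma}{\eta}(x-y)}\big)$ for all $(x,y)\in\R^2$. The map $(x,y)\mapsto\big(x,e^{\frac{\gamma}{\eta}(x-y)}\big)$ is precisely the inverse transformation $T^{-1}$ introduced just after \eqref{T}, which is a global diffeomorphism from $\R^2$ onto $\R\times(0,\infty)$; in particular it is continuous. Since $\bar v$ is continuous on $\R\times(0,\infty)$ by Proposition \ref{prop:barv}.$(i)$, the composition $\hat v=\bar v\circ T^{-1}$ is continuous on $\R^2$ as a composition of continuous maps.

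Concretely, the steps are: first recall $T^{-1}(x,y)=(x,e^{\frac{\gamma}{\eta}(x-y)})$ and note that both components are continuous functions of $(x,y)$, so $T^{-1}:\R^2\to\R\times(0,\infty)$ is continuous; second, invoke Proposition \ref{prop:barv}.$(i)$ for continuity of $\bar v$ on $\R\times(0,\infty)$; third, observe that the identity \eqref{eq:vhat} expresses $\hat v$ as $\bar v\circ T^{-1}$; fourth, conclude by the standard fact that the composition of continuous functions is continuous. Alternatively, one could argue directly: given $(x_n,y_n)\to(x,y)$ in $\R^2$, the continuity of the exponential map gives $e^{\frac{\gamma}{\eta}(x_n-y_n)}\to e^{\frac{\gamma}{\eta}(x-y)}=:\varphi>0$, so $(x_n,e^{\frac{\gamma}{\eta}(x_n-y_n)})\to(x,\varphi)$ in $\R\times(0,\infty)$, and hence $\hat v(x_n,y_n)=\bar v(x_n,e^{\frac{\gamma}{\eta}(x_n-y_n)})\to\bar v(x,\varphi)=\hat v(x,y)$.

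There is essentially no obstacle here: the content is entirely inherited from the already-established continuity of $\bar v$ and the smoothness of the coordinate change, exactly as the sentence preceding the proposition anticipates (``the value function $\hat v(\cdot,\cdot)$ inherits important properties which have already been proved for $\bar v(\cdot,\cdot)$''). The only point worth a word of care is that the codomain of $T^{-1}$ is $\R\times(0,\infty)$, which is exactly the domain on which $\bar v$ was shown to be continuous, so no boundary issues arise. The proof is therefore a one-line composition argument, and I would write it as such.

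\begin{proof}
By \eqref{eq:vhat} we have $\hat v(x,y)=\bar v\big(x,e^{\frac{\gamma}{\eta}(x-y)}\big)$ for every $(x,y)\in\R^2$; that is, $\hat v=\bar v\circ T^{-1}$, where $T^{-1}(x,y)=\big(x,e^{\frac{\gamma}{\eta}(x-y)}\big)$ is the inverse of the diffeomorphism $T$ from \eqref{T}. Since both components of $T^{-1}$ are continuous, $T^{-1}:\R^2\to\R\times(0,\infty)$ is continuous, and by Proposition \ref{prop:barv}.$(i)$ the map $\bar v$ is continuous on $\R\times(0,\infty)$. Hence $\hat v$, being the composition of two continuous maps, is continuous on $\R^2$.
\end{proof}
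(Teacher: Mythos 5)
Your argument is exactly the one the paper intends: the paper states that Proposition \ref{prop:hatv} follows ``immediately from Proposition \ref{prop:barv}.$(i)$,'' and your composition $\hat v=\bar v\circ T^{-1}$ with $T^{-1}$ continuous into $\R\times(0,\infty)$ is precisely that deduction. The proof is correct and matches the paper's approach.
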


Moreover, since $\bar{v}(x,\exp\{{\gamma}(x-y)/{\eta}\})= \overline{V}_x(x,\exp\{{\gamma}(x-y)/{\eta}\})$ by Proposition \ref{prop:Vbar}.$(ii)$, it follows from \eqref{Vhat-xy} that $\hat{v}(x,y) = \widehat{V}_x(x,y) + \widehat{V}_y(x,y)$ for all $(x,y) \in \R^2$, and consequently the open set ${\mathcal{C}_3}$ defined in \eqref{C3} takes the form 
\begin{align}
\label{C3hat}
{\mathcal{C}_3}= \hspace{-1mm}\big\{(x,y)\in\R^2: -K^+\big(1+e^{\frac{\gamma}{\eta}(x-y)}\big) < \hat{v}(x,y)<K^- \big(1+e^{\frac{\gamma}{\eta}(x-y)}\big) \big\} \hspace{-1mm}= T(\mathcal{C}_2).
\end{align}
Hence, by also defining the closed sets
\begin{align}
\label{S3+}
\begin{split}
{\mathcal{S}_3^+}&:=\big\{(x,y)\in\R^2: \ \hat{v}(x,y)\leq - K^+\big(1+e^{\frac{\gamma}{\eta}(x-y)}\big) \big\}, \\
{\mathcal{S}_3^-}&:=\big\{(x,y)\in\R^2: \ \hat{v}(x,y)\geq K^- \big(1+e^{\frac{\gamma}{\eta}(x-y)}\big) \big\},
\end{split}
\end{align}
the global diffeomorphism $T$ from \eqref{T} implies that ${\mathcal{S}_3^\pm}=T({\mathcal{S}_2^{\pm}})$ as well, where $\mathcal{C}_2$ and $\mathcal{S}_2^{\pm}$ are the continuation and stopping regions \eqref{C2}--\eqref{S2} for the Dynkin game $\bar{v}$ in \eqref{eq:barv}. 
Combining these relationships with the structure of the latter regions  in \eqref{C2b} 
yields that ${\mathcal{C}_3}$ and ${\mathcal{S}_3}^{\pm}$ are connected.

In order to obtain the explicit structure of the regions ${\mathcal{C}_3}$ and ${\mathcal{S}_3}^{\pm}$, we now define the generalised inverses of the nonincreasing $b_{\pm}$ (cf.\ Proposition \ref{prop:barb}) by
\begin{equation}
\label{bpminv}
b_+^{-1}(x) \hspace{-0.5mm}:= \hspace{-0.5mm}\sup\{\varphi \hspace{-0.5mm}\in \hspace{-0.5mm}(0,\infty) \hspace{-0.5mm}: \hspace{-0.5mm}b_+(\varphi) \hspace{-0.5mm}\geq \hspace{-0.5mm}x\}, \quad b_-^{-1}(x) \hspace{-0.5mm}:= \hspace{-0.5mm}\inf\{\varphi \hspace{-0.5mm}\in \hspace{-0.5mm}(0,\infty) \hspace{-0.5mm}: \hspace{-0.5mm}b_-(\varphi) \hspace{-0.5mm}\leq \hspace{-0.5mm}x\}.
\end{equation}
Since the map $\varphi\mapsto T_2(x,\varphi)$ in \eqref{T} is decreasing for any given $x\in\R$ (cf.\ the functions $b_{\pm}$ are nonincreasing due to Proposition \ref{prop:barb}.$(i)$), we have
\begin{eqnarray*}
(x,y)\in{\mathcal{C}_3} 
& \Leftrightarrow & \big(x, e^{\frac{\gamma}{\eta}(x-y)}\big) \in {\mathcal{C}_2} 
\Leftrightarrow \ x - \tfrac{\eta}{\gamma}\log (b^{-1}_{-}(x)) < y< x - \tfrac{\eta}{\gamma}\log (b^{-1}_{+}(x)) ,
\end{eqnarray*}
while similar relations hold true for the characterisation of ${\mathcal{S}_3^\pm}$.
Then, by defining
\begin{equation}
\label{Cpm}
c^{-1}_{\pm}(x):= x-\tfrac{\eta}{\gamma}\log (b^{-1}_{\pm}(x)),
\end{equation}
we can obtain the structure of the continuation and stopping regions of $\widehat{v}$, as
\begin{align}
\label{C3bis}
\begin{split}
&{\mathcal{C}_3}=\{(x,y)\in\R^2: \ c_-^{-1}(x)<y<c_+^{-1}(x)\},
\\
{\mathcal{S}_3^+}=\{(x,y)\in\;&\R^2: \ y\geq c_+^{-1}(x)\} 
\quad \text{and} \quad 
{\mathcal{S}_3^-}=\{(x,y)\in\R^2: \ y\leq c_-^{-1}(x)\}.
\end{split}
\end{align}

The next lemma can be proved thanks to \eqref{bpminv}, \eqref{Cpm} and Proposition \ref{prop:barb}. 

\begin{lemma} \label{c-1}
The functions $c^{-1}_\pm(\cdot)$ defined in \eqref{Cpm} are strictly increasing, while $c^{-1}_+(\cdot)$ is left-continuous and $c^{-1}_-(\cdot)$ is right-continuous on $\R$.
\end{lemma}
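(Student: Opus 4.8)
The plan is to derive the claimed monotonicity and one-sided continuity of $c^{-1}_\pm$ directly from the definition \eqref{Cpm}, namely $c^{-1}_\pm(x) = x - \tfrac{\eta}{\gamma}\log\big(b^{-1}_\pm(x)\big)$, by transferring the corresponding properties of the generalised inverses $b^{-1}_\pm$ established via Proposition \ref{prop:barb}. The first thing I would do is record the behaviour of $b^{-1}_\pm$: since $b_\pm$ are nonincreasing on $(0,\infty)$ (Proposition \ref{prop:barb}.$(i)$) and bounded by $x^*_\pm$ (Proposition \ref{prop:barb}.$(iii)$), their generalised inverses $b^{-1}_+(x)=\sup\{\varphi>0: b_+(\varphi)\geq x\}$ and $b^{-1}_-(x)=\inf\{\varphi>0: b_-(\varphi)\leq x\}$ are themselves nonincreasing functions of $x$; moreover the left-continuity of $b_+$ and right-continuity of $b_-$ (Proposition \ref{prop:barb}.$(ii)$) translate, under the usual rules for generalised inverses of monotone functions, into right-continuity of $b^{-1}_+$ and left-continuity of $b^{-1}_-$ respectively (note the swap of side that is characteristic of inversion). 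One must also observe that $b^{-1}_\pm(x)\in(0,\infty)$ for all $x$ in the relevant range, using that $b_\pm$ take values in $[x^*_+,x^*_-]$ together with the conventions $\sup\emptyset=-\infty$, $\inf\emptyset=+\infty$ — care is needed at the extreme values of $x$, but since we only claim the properties "on $\R$" in the sense of the effective domain this is routine.

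Next I would handle strict monotonicity. Write $c^{-1}_\pm(x) = x - \tfrac{\eta}{\gamma}\log b^{-1}_\pm(x)$. The map $x\mapsto x$ is strictly increasing; the map $x\mapsto -\tfrac{\eta}{\gamma}\log b^{-1}_\pm(x)$ is nondecreasing because $b^{-1}_\pm$ is nonincreasing and $\log$ is increasing, so $-\log b^{-1}_\pm$ is nondecreasing. A sum of a strictly increasing function and a nondecreasing function is strictly increasing, which gives that $c^{-1}_\pm$ are strictly increasing. This is the cleanest way to see the claim and avoids any case analysis; the only subtlety is confirming that $b^{-1}_\pm$ never vanishes or blows up on the domain under consideration, which was dealt with in the previous step.

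For the one-sided continuity, I would again use the decomposition $c^{-1}_\pm(x) = x - \tfrac{\eta}{\gamma}\log b^{-1}_\pm(x)$ and note that $x\mapsto x$ is continuous, $\log$ is continuous on $(0,\infty)$, and continuity (from a given side) of $c^{-1}_\pm$ therefore reduces to one-sided continuity of $b^{-1}_\pm$. From the previous step, $b^{-1}_+$ is right-continuous so $c^{-1}_+$ is right-continuous — wait, that is the opposite of what is claimed, so the main point to get right is precisely the correct bookkeeping of which side survives two successive inversions. Let me re-examine: $b_+$ itself is a free boundary in the $(x,\varphi)$-plane that is left-continuous and nonincreasing, so its generalised inverse (which is again a boundary in the same plane, just with the roles of the axes swapped) is left-continuous; likewise $b_-$ is right-continuous and nonincreasing so $b^{-1}_-$ is right-continuous. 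Then $c^{-1}_+ = \mathrm{id} - \tfrac{\eta}{\gamma}\log b^{-1}_+$ is left-continuous (sum of a continuous and a left-continuous function), and $c^{-1}_- $ is right-continuous, matching the statement. The correct rule here is that the generalised inverse $\sup\{\varphi: b_+(\varphi)\geq x\}$ of a left-continuous nonincreasing function inherits left-continuity, which I would verify with a short $\varepsilon$-argument using the monotonicity of $b_+$ and the fact that $\{b_+\geq x\}$ is a closed-from-below half-line for each $x$ when $b_+$ is left-continuous.

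The main obstacle — and it is a bookkeeping obstacle rather than a deep one — is getting the side of continuity right through the two compositions (first the generalised inversion $b_\pm \mapsto b^{-1}_\pm$, then the affine-plus-log transformation \eqref{Cpm}), together with handling the endpoints of the domain where $b^{-1}_\pm$ might degenerate to $0$ or $+\infty$. I would therefore keep an explicit table in mind: $b_+$ nonincreasing, left-continuous $\Rightarrow b^{-1}_+$ nonincreasing, left-continuous $\Rightarrow c^{-1}_+$ strictly increasing, left-continuous; and symmetrically for the minus boundaries with "right" in place of "left". Once this table is justified — via the standard lemma that, for a nonincreasing function $h$, $\sup\{t: h(t)\geq s\}$ is left-continuous in $s$ iff $h$ is left-continuous, and $\inf\{t: h(t)\leq s\}$ is right-continuous in $s$ iff $h$ is right-continuous — the remainder is a one-line composition argument, as sketched above.
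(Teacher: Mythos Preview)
Your proposal is correct and follows exactly the route the paper intends (the paper omits the proof, citing only \eqref{bpminv}, \eqref{Cpm}, and Proposition \ref{prop:barb}): deduce that $b^{-1}_\pm$ are nonincreasing from Proposition \ref{prop:barb}.$(i)$, then read off strict monotonicity of $c^{-1}_\pm$ from the decomposition $x\mapsto x - \tfrac{\eta}{\gamma}\log b^{-1}_\pm(x)$, and finally transfer one-sided continuity. One minor correction to your ``standard lemma'': for a nonincreasing $h$, the map $s\mapsto\sup\{t:h(t)\geq s\}$ is \emph{always} left-continuous and $s\mapsto\inf\{t:h(t)\leq s\}$ is \emph{always} right-continuous, regardless of any continuity of $h$ itself --- so Proposition \ref{prop:barb}.$(ii)$ is in fact not needed here, and your ``iff'' should be weakened to the (automatic) one-sided implication.
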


In light of Lemma \ref{c-1}, for $y \in \R$, we may define the functions   
\begin{equation}
\label{eq:cpm}
c_{+}(y):=\inf\{x\in\R: \ y\leq c^{-1}_+(x)\} 
\quad \text{and} \quad 
c_-(y):=\sup\{x\in\R: \ y\geq c^{-1}_-(x)\} .
\end{equation}
In the following result, we prove that $y \mapsto c_\pm(y)$ identify with the optimal free boundaries of the Dynkin game $\hat{v}$ in \eqref {eq:vhat} and provide some important properties such as their global Lipschitz continuity. 

\begin{proposition} \label{prop:propcpm}
The free boundaries $c_\pm$ defined in \eqref{eq:cpm}. Then, 
\begin{enumerate}
\item[(i)] $c_{\pm}(\cdot)$ are nondecreasing on $\R$ and we have $x_+^*\leq c_+(y)<c_-(y)\leq x_-^*$ for all $y\in\R$ (with $x^*_{\pm}$ as in Proposition \ref{prop:aa}). Moreover, $c_+(y) \leq (C')^{-1}(-\rho K^+)$ and $c_-(y) \geq (C')^{-1}(\rho K^-)$ for all $y \in \R$;
\item[(ii)] $c_\pm(\cdot)$ are Lipschitz-continuous on $\R$ with Lipschitz constant $L=1$, namely 
$0\leq c_\pm(y)-c_\pm(y')\leq y-y'$, for all $y\geq y'$.
\item[(iii)] The structure of the continuation and stopping regions for \eqref{eq:vhat} take the form
\begin{align*}
&{\mathcal{C}_3}=\{(x,y)\in\R^2: \ c_+(y)<x<c_-(y)\}, \\
{\mathcal{S}_3^+}=\{(x,y)\in \;&\R^2: \ x \leq c_+(y)\}
\quad \text{and} \quad 
{\mathcal{S}_3^-}=\{(x,y)\in\R^2: \ x \geq c_-(y)\}.
\end{align*}
\end{enumerate}
\end{proposition}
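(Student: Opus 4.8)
The plan is to prove the three parts in the order (i), then (iii), then (ii), since the structural description in (iii) is a direct translation of \eqref{C3bis} once we know that $c_\pm$ as defined in \eqref{eq:cpm} are the (generalised) inverses of $c^{-1}_\pm$, and the Lipschitz bound in (ii) is cleanest to extract from the geometry of the regions obtained in (iii). For part (i), I would argue entirely by transporting known facts through the change of variables. The monotonicity of $c_\pm$ follows from Lemma \ref{c-1}: since $c^{-1}_\pm$ are strictly increasing, their generalised inverses $c_\pm$ defined in \eqref{eq:cpm} are nondecreasing. For the two-sided bounds $x^*_+\le c_+(y)<c_-(y)\le x^*_-$, I would use \eqref{Cpm} together with Proposition \ref{prop:barb}.(iii): because $x^*_+\le b_+(\varphi)<b_-(\varphi)\le x^*_-$ for all $\varphi$, the generalised inverses $b^{-1}_\pm$ satisfy corresponding bounds, and feeding these into $c^{-1}_\pm(x)=x-\frac{\eta}{\gamma}\log b^{-1}_\pm(x)$ and then inverting yields the claim; the bounds $c_+(y)\le (C')^{-1}(-\rho K^+)$ and $c_-(y)\ge (C')^{-1}(\rho K^-)$ follow the same way from the analogous bounds on $b_\pm$ in Proposition \ref{prop:barb}.(iii). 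The strict separation $c_+<c_-$ is inherited from the strict separation $b_+<b_-$, equivalently from the fact that $\mathcal{C}_3=T(\mathcal{C}_2)$ is a nonempty open set with the structure \eqref{C3bis}.

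For part (iii), I would start from \eqref{C3bis}, which already expresses $\mathcal{C}_3$ and $\mathcal{S}_3^\pm$ in terms of $c^{-1}_\pm$, and show that ``$c^{-1}_-(x)<y<c^{-1}_+(x)$'' is equivalent to ``$c_+(y)<x<c_-(y)$''. This is the standard duality between a strictly increasing right/left-continuous function and its generalised inverse: from Lemma \ref{c-1} and the definitions \eqref{eq:cpm}, one has $y\le c^{-1}_+(x)\iff c_+(y)\le x$ and $y\ge c^{-1}_-(x)\iff c_-(y)\le x$, and upgrading to strict inequalities using the strict monotonicity and the one-sided continuity gives the stated characterisations of $\mathcal{C}_3$ and of the closed sets $\mathcal{S}_3^\pm$. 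One must also verify that the $c_\pm$ introduced in \eqref{eq:cpm} genuinely coincide with the free boundaries of $\hat v$ in the sense of \eqref{C3hat}--\eqref{S3+}; this is immediate once the region identities hold, by comparing with \eqref{C3bis}.

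For part (ii), the key geometric observation is that in the $(x,y)$-plane the dynamics \eqref{X0Y0} have the feature that $Y^0$ is purely deterministic with positive drift while $X^0$ is a Brownian motion with drift, so the map $\varphi\mapsto T_2(x,\varphi)=x-\frac{\eta}{\gamma}\log\varphi$ shears the plane. The cleanest route is: since $c^{-1}_\pm(x)=x-\frac{\eta}{\gamma}\log b^{-1}_\pm(x)$ and $b^{-1}_\pm$ is nondecreasing (as the generalised inverse of the nonincreasing $b_\pm$), the function $x\mapsto c^{-1}_\pm(x)-x = -\frac{\eta}{\gamma}\log b^{-1}_\pm(x)$ is nonincreasing; hence $c^{-1}_\pm(x)-c^{-1}_\pm(x')\le x-x'$ for $x\ge x'$, i.e. $c^{-1}_\pm$ are Lipschitz with constant $1$ from above and (being strictly increasing) nonnegative slope from below. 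Passing to the inverse functions $c_\pm$ preserves exactly this one-sided Lipschitz property, yielding $0\le c_\pm(y)-c_\pm(y')\le y-y'$ for $y\ge y'$. The main obstacle I anticipate is the careful bookkeeping of generalised inverses and one-sided continuity — making sure strict versus non-strict inequalities are handled correctly when inverting $c^{-1}_\pm$, and confirming that $b^{-1}_\pm$ inherits monotonicity and the correct one-sided continuity from Proposition \ref{prop:barb} — rather than any deep analytic difficulty; the geometry does all the real work once set up.
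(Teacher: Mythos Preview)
Your overall strategy for all three parts matches the paper's approach exactly: part (i) via Lemma~\ref{c-1} and the fact that the transformation $T$ preserves the $x$-coordinate (so the bounds on $b_\pm$ transfer directly), part (iii) via \eqref{C3bis} and the duality between a monotone function and its generalised inverse, and part (ii) via the formula $c^{-1}_\pm(x)=x-\frac{\eta}{\gamma}\log b^{-1}_\pm(x)$.

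However, your argument for (ii) contains a direction error that makes it fail as written. The generalised inverse of a \emph{nonincreasing} function is \emph{nonincreasing}, not nondecreasing: from \eqref{bpminv}, if $x\le x'$ then $\{\varphi:b_+(\varphi)\ge x'\}\subseteq\{\varphi:b_+(\varphi)\ge x\}$, so $b_+^{-1}(x')\le b_+^{-1}(x)$, and similarly for $b_-^{-1}$. Consequently $x\mapsto -\tfrac{\eta}{\gamma}\log b^{-1}_\pm(x)$ is \emph{nondecreasing}, which gives (this is the paper's estimate \eqref{est})
\[
c^{-1}_\pm(x)-c^{-1}_\pm(x')\;\ge\; x-x'\qquad\text{for all }x\ge x',
\]
i.e.\ the slope of $c^{-1}_\pm$ is at least $1$. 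It is this \emph{lower} bound on the slope of $c^{-1}_\pm$ that, upon inversion, yields the \emph{upper} bound $c_\pm(y)-c_\pm(y')\le y-y'$. Your assertion that ``passing to the inverse functions preserves exactly this one-sided Lipschitz property'' is false: a strictly increasing function with slope in $(0,1]$ has an inverse with slope in $[1,\infty)$, not the other way around. Once you reverse the inequality, the argument goes through and coincides with the paper's.
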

\begin{proof}
{\it Proof of (i).} The first part of the claim follows from Lemma \ref{c-1}, together with the definition \eqref{eq:cpm} of $c_{\pm}$. The second and third parts of the claim are due to the fact that $T_1$ as in \eqref{T} is the identity.

{\it Proof of (ii).} Using the definitions \eqref{Cpm} of $c^{-1}_\pm$ and the monotonicity of $b^{-1}_\pm$ (see proof of Lemma \ref{c-1}) we get
\begin{equation}
\label{est}
c^{-1}_{\pm}(x)-c^{-1}_{\pm}(x') 
= x - \tfrac{\eta}{\gamma}\log (b^{-1}_{\pm}(x)) - x' + \tfrac{\eta}{\gamma}\log (b^{-1}_{\pm}(x')) 
\geq x-x',  \ \ \ \forall\; x\geq x'.
\end{equation}
Combining this with definitions \eqref{eq:cpm} and part $(i)$, we obtain the desired claim. 

{\it Proof of (iii).} This is again due to the definitions \eqref{eq:cpm} of $c_{\pm}$, their monotonicity from part $(i)$ and the expressions of the sets in \eqref{C3bis}. 
\end{proof}

\subsection{Global $C^1$-regularity of $\widehat{v}$}
\label{sec:smoothfit}

For any $(x,y) \in \R^2$ given and fixed, we consider the strong solution to the dynamics in \eqref{X0Y0}, denoted by 
$X_t^{0,x}=x + \mu_0 t + \eta {W}_t$ and $Y^{0,y}_t=y + \frac{1}{2}(\mu_1+\mu_0) t$,  $t\geq0$
and we define 
\begin{equation}\label{star}
\tau^{\star}\hspace{-0.5mm}(x,y) \hspace{-0.5mm}:= \hspace{-0.5mm}\inf\{t\geq 0 \hspace{-0.5mm}: \hspace{-0.5mm}(X_t^{0,x},Y_t^{0,y}) \hspace{-0.5mm}\in \hspace{-0.5mm}\mathcal{S}_3^{+}\}, \ 
\sigma^{\star}\hspace{-0.5mm}(x,y) \hspace{-0.5mm}:= \hspace{-0.5mm}\inf \{t\geq 0 \hspace{-0.5mm}: \hspace{-0.5mm}(X_t^{0,x},Y_t^{0,y}) \hspace{-0.5mm}\in \hspace{-0.5mm}\mathcal{S}_3^{-}\}.
\end{equation}

Notice that, in light of the one-to-one and onto transformations $\overline{T}$ and $T$, the pair $(\tau^{\star}(x,y),\sigma^{\star}(x,y))$ realises a saddle point for the Dynkin game with value $\hat{v}(x,y)$ in \eqref{eq:vhat} if and only if, by setting $\pi:=e^{\frac{\gamma}{\eta}(x-y)}/(1+e^{\frac{\gamma}{\eta}(x-y)})$, the stopping times $\widetilde{\tau}(x,\pi):=\inf\{t\geq 0: (X_t^{0,x},\Pi_t^{\pi}) \in \mathcal{S}_1^{+}\}$ and $\widetilde{\sigma}(x,\pi):=\inf\{t\geq 0: (X_t^{0,x},\Pi_t^{\pi}) \in \mathcal{S}_1^{-}\}$ form a saddle point for the game with value $v(x,\pi)$ in \eqref{optstop}. 
In order to prove the latter claim, one can apply \cite[Theorem 2.1]{EkstromPes} upon setting (in their notation) $X_t:=(t, X^0_t, \Pi_t)$, $G_1(t,x,\pi):=e^{-\rho t}(-K_+ - M(x,\pi))$, $G_2(t,x,\pi):=e^{-\rho t}(K_- - M(x,\pi))$, $G_3(t,x,\pi):=0$, with $M(x,\pi):=\E_{(x,\pi)}[\int_0^{\infty} e^{-\rho t} C'(X^0_t) \d t]$, and noticing that $\E_{(x,\pi)}[\sup_{t\geq0} e^{-\rho t}|M(X^0_t,\Pi_t)|]< \infty$. 
This follows via \eqref{X0Pi} and standard estimates employing Assumption \ref{ass:C}, which yield that $|M(x,\pi)| \leq \kappa (1+|x|^{p-1})$.

In the sequel, we aim at deriving the global $C^1$-regularity of $\hat{v}(\cdot,\cdot)$. 
In order to accomplish that, we need the following result about the regularity (in the probabilistic sense) of $(\tau^{\star},\sigma^{\star})$. 

\begin{lemma}
\label{lemmasf}
Suppose that $(x_n,y_n)_{n\in\mathbb{N}^*}\subset \mathcal{C}_3$ is such that $(x_n,y_n)\to (x_o,y_o)$, where $y_o\in\R$ and $x_o:=c_+(y_o)$ (resp.,\  $x_o:=c_-(y_o)$), then $\tau^{\star}(x_n,y_n)\to 0$ (resp.,\ $\sigma^{\star}(x_n,y_n)\to 0$), $\Q$-a.s..
\end{lemma}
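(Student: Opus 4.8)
The goal is to show that the optimal stopping time for the sup-player (the $\tau^\star$ case; the $\sigma^\star$ case is symmetric) vanishes $\Q$-a.s.\ as $(x_n,y_n)\to(x_o,y_o)$ with $x_o=c_+(y_o)$ sitting exactly on the lower free boundary. The natural strategy is the standard one for smooth-fit proofs: exploit the geometry of the stopping region $\mathcal{S}_3^+=\{(x,y)\in\R^2: x\le c_+(y)\}$ together with the Lipschitz continuity of $c_+$ with constant $1$ (Proposition~\ref{prop:propcpm}$(ii)$), and compare the controlled diffusion $(X^{0,x},Y^{0,y})$ against this boundary near time zero via the law of the iterated logarithm for the driving Brownian motion.

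\textbf{Key steps.} First I would reduce to a one-dimensional comparison. Write $X^{0,x_n}_t - Y^{0,y_n}_t = (x_n - y_n) + (\mu_0 - \tfrac12(\mu_0+\mu_1))t + \eta W_t = (x_n - y_n) - \tfrac{\gamma\eta}{2}t + \eta W_t$ and note $Y^{0,y_n}_t = y_n + \tfrac12(\mu_1+\mu_0)t$ is deterministic and increasing. Entering $\mathcal{S}_3^+$ means $X^{0,x_n}_t \le c_+(Y^{0,y_n}_t)$. Define the process $Z^n_t := X^{0,x_n}_t - c_+(Y^{0,y_n}_t)$; then $\tau^\star(x_n,y_n) = \inf\{t\ge 0: Z^n_t \le 0\}$. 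Since $c_+$ is Lipschitz-$1$ and $Y^{0,y_n}$ has slope $\tfrac12(\mu_1+\mu_0)$, the drift of the (upper bound of the) finite-variation part of $Z^n$ is bounded: for $t' \ge t$, $c_+(Y^{0,y_n}_{t'}) - c_+(Y^{0,y_n}_t) \le Y^{0,y_n}_{t'}-Y^{0,y_n}_t = \tfrac12(\mu_1+\mu_0)(t'-t)$ if $\mu_1+\mu_0\ge 0$, and $c_+$ nondecreasing gives the lower bound; in all cases $c_+(Y^{0,y_n}_t) = c_+(y_n) + R^n_t$ with $R^n_t$ absolutely continuous and $|R^n_t|\le |\tfrac12(\mu_1+\mu_0)|\,t$. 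Hence $Z^n_t = (x_n - c_+(y_n)) + \eta W_t + O(t)$ where $x_n - c_+(y_n) \to x_o - c_+(y_o) = 0$. Second, I would fix $\delta>0$ and show $\Q(\tau^\star(x_n,y_n) > \delta) \to 0$. On the event $\{\tau^\star(x_n,y_n)>\delta\}$ we must have $Z^n_t > 0$ for all $t\in[0,\delta]$, i.e.\ $\eta W_t > -(x_n - c_+(y_n)) + O(t) \ge -\varepsilon_n - c\delta$ for all $t\le\delta$, where $\varepsilon_n := |x_n - c_+(y_n)| \to 0$ and $c := |\tfrac12(\mu_1+\mu_0)| + \tfrac{\gamma\eta}{2}$ (absorbing the $Z$-drift constants). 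Third, by the law of the iterated logarithm (or simply $\liminf_{t\downarrow 0} W_t/\sqrt{t} = -\infty$ a.s.), $\Q$-a.s.\ there exists a sequence $t_k\downarrow 0$ with $W_{t_k} \to -\infty$ in the sense $W_{t_k}/\sqrt{t_k}\to-\infty$; in particular for $\Q$-a.e.\ $\omega$ and every $\varepsilon>0$ there is $t\in(0,\delta]$ with $\eta W_t(\omega) < -\varepsilon$. Applying this with the (deterministic) bound: for $\Q$-a.e.\ $\omega$, $\inf_{t\in(0,\delta]}\big(\eta W_t(\omega) + c t\big) = -\infty$ (the Brownian term dominates the linear term near $0$), so for $n$ large enough that $\varepsilon_n$ is below a fixed level we get $Z^n_t(\omega) \le 0$ for some $t\le\delta$, contradicting $\tau^\star>\delta$. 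Making this quantitative via Fatou / dominated convergence over $\omega$ yields $\limsup_n \Q(\tau^\star(x_n,y_n)>\delta) = 0$; since $\delta>0$ is arbitrary and $\tau^\star\ge 0$, this is exactly $\tau^\star(x_n,y_n)\to 0$ in probability, and a subsequence argument together with monotonicity of $\delta\mapsto\{\tau^\star>\delta\}$ upgrades it to $\Q$-a.s.\ convergence (alternatively: show directly $\Q(\limsup_n \tau^\star(x_n,y_n) \ge \delta) = 0$ for each $\delta$ in a countable set).

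\textbf{Cleaner formulation of the core estimate.} To avoid the subsequence fuss, I would instead argue pathwise and uniformly: set $\tau^\star_n := \tau^\star(x_n,y_n)$ and observe that on $\{\tau^\star_n \ge \delta\}$ one has, for all $t\in[0,\delta]$, $\eta W_t \ge -\varepsilon_n - ct$, hence $\sup_{t\in[0,\delta]}\big(-\eta W_t - ct\big) \le \varepsilon_n$. Therefore $\{\tau^\star_n\ge\delta\}\subseteq\{M_\delta \le \varepsilon_n\}$ where $M_\delta := \sup_{t\in[0,\delta]}\big(-\eta W_t - c t\big)$ is a fixed random variable with $M_\delta > 0$ $\Q$-a.s.\ (because $-\eta W_t - ct$ is strictly positive for small $t$ along a sequence, by LIL). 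Since $\varepsilon_n\to 0$, $\bigcap_n\{M_\delta\le\varepsilon_n\} = \{M_\delta = 0\}$, a $\Q$-null event, and the sets $\{M_\delta\le\varepsilon_n\}$ are eventually decreasing (after passing to $\varepsilon_n$ monotone, which we may WLOG for the a.s.\ statement by working along an arbitrary subsequence and showing every subsequence has a further subsequence converging to $0$ a.s.). This directly gives $\tau^\star_n\to 0$ $\Q$-a.s.

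\textbf{Main obstacle.} The only genuinely delicate point is the passage from convergence in probability to $\Q$-almost sure convergence, i.e.\ controlling the \emph{joint} behavior over $n$ rather than for each $n$ separately; the clean fix is the pathwise bound $\{\tau^\star_n\ge\delta\}\subseteq\{M_\delta\le\varepsilon_n\}$ above, which isolates all the randomness into the single variable $M_\delta$ and reduces everything to $\Q(M_\delta>0)=1$ — a one-line consequence of the law of the iterated logarithm at $0$ for Brownian motion, using crucially that the Lipschitz-$1$ bound on $c_+$ makes the competing finite-variation drift of $Z^n$ only $O(t)$, which is negligible against the $\eta W_t \sim \sqrt{2t\log\log(1/t)}$ fluctuations. (A secondary, routine point is checking that the saddle point $(\tau^\star,\sigma^\star)$ is genuinely given by the hitting times in \eqref{star}, which has already been recorded in the discussion preceding the lemma via \cite[Theorem 2.1]{EkstromPes}, so it may be invoked directly.)
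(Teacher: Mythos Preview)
Your proposal is correct and follows essentially the same approach as the paper: both exploit the Lipschitz-$1$ continuity of $c_+$ (Proposition~\ref{prop:propcpm}$(ii)$) to reduce the event $\{\tau^\star(x_n,y_n)\ge\delta\}$ to a lower bound of the form $\eta W_t \ge -\varepsilon_n - ct$ on $[0,\delta]$, and then contradict this via the law of the iterated logarithm at $t=0$. The paper's argument is organized as a direct pathwise contradiction (fix $\omega$, assume $\limsup_n\tau^\star(x_n,y_n)(\omega)=\delta>0$, pass to a subsequence, let $n\to\infty$ using continuity of $c_+$, and invoke LIL), which is exactly what your ``cleaner formulation'' via the inclusion $\{\tau^\star_n\ge\delta\}\subseteq\{M_\delta\le\varepsilon_n\}$ amounts to once you drop the unnecessary detour through convergence in probability.
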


\begin{proof}
We prove the claim for $\tau^{\star}(x_n,y_n)$, since the proof for $\sigma^{\star}(x_n,y_n)$ can be performed analogously. Fix $\omega\in\Omega$ and assume (aiming for a contradiction) that 
$
\limsup_{n\rightarrow \infty} \tau^{\star}(x_n,y_n)(\omega)=:\delta>0.
$
Namely, there exists a subsequence, still labelled by $(x_n,y_n)$, such that 
$X_t^{0,x_n}(\omega) >  c_+(Y^{0,y_n}_t)$, for all $n\in\mathbb{N}^*$ and $t\in[0,\delta/2]$, 
that is,
\begin{equation}\label{strict}
x_n +\mu_0 t + \eta {W}_t(\omega)>  c_+\big(y_n + \tfrac{1}{2}(\mu_1+\mu_0) t\big) \qquad \forall\; n\in\mathbb{N}^*,  \quad \forall\; t\in[0,\delta/2]. 
\end{equation}
Hence, taking the limit as $n\to\infty$ and considering that $c_+$ is continuous (see Proposition \ref{prop:propcpm}.$(ii)$),
$\eta {W}_t(\omega) \geq c_+ (y_o + \frac{1}{2}(\mu_1+\mu_0) t) - x_o - \mu_0 t$, 
for all $t\in[0,\delta/2].$
Using now the Lipschitz continuity of $c_+$ (see again Proposition \ref{prop:propcpm}.$(ii)$), we further obtain $\forall\; n\in\mathbb{N}^*$ and $\forall\; t\in[0,\delta/2]$ that
\begin{align}
\label{eq:LIL}
\eta {W}_t(\omega) & \geq c_+(y_o) - \tfrac{1}{2}(\mu_1+\mu_0)^- t - x_o - \mu_0 t 
\ = \  - \tfrac{1}{2}\big( (\mu_1+\mu_0)^- + \mu_0 \big) t. 
\end{align}
However, by the law of iterated logarithm, we have that \eqref{eq:LIL} can only happen for $\omega$ belonging to a $\Q$-null set and the proof is complete.
\end{proof}

\begin{remark} 
\label{rem:check}
From the previous proof one can easily observe that, by replacing the strict inequality with the large one in \eqref{strict}, we can actually prove that $\check\tau^{\star}(x_n,y_n)\to 0$ and $\check\sigma^{\star}(x_n,y_n)\to 0$, $\Q$-a.s., where
\begin{align}\label{check}
\check{\tau}^{\star}(x,y)&:=\inf\{t\geq 0: (X_t^{0,x},Y_t^{0,y})\in 	\mathrm{Int}(\mathcal{S}_3^+)\}, \\
\label{check-sigma}
\check{\sigma}^{\star}(x,y)&:=\inf\{t\geq 0: (X_t^{0,x},Y_t^{0,y})\in 	\mathrm{Int}(\mathcal{S}_3^-)\}.
\end{align}
\end{remark}

We now show that the value function $\hat{v}(x,y)$ of the Dynkin game \eqref{eq:vhat} is smooth across the topological boundary $\partial \mathcal{C}_3$ of the continuation region $\mathcal{C}_3$ from \eqref{C3hat} in both directions $x$ and $y$. 
The proof exploits the probabilistic expressions of the derivatives of $\hat{v}$, Lemma \ref{lemmasf} and Remark \ref{rem:check}. 
Its proof can be found in the Appendix \ref{B}. 

\begin{proposition}[Smooth-fit]
\label{prop:sf}
Let $y_o\in \R$ and set $x_o:=c_\pm(y_o)$. Then the value function $\hat{v}$ defined in \eqref{eq:vhat} satisfies
$$\lim_{\stackrel{(x,y)\to(x_o,y_o)}{(x,y) \in \mathcal{C}_3}} \hat{v}_{x}(x,y) = {\mp} \frac{\gamma}{\eta}K^\pm e^{\frac{\gamma}{\eta}(x_o-y_o)} 
, \quad \lim_{\stackrel{(x,y)\to(x_o,y_o)}{(x,y)\in\mathcal{C}_3}} \hat{v}_{y}(x,y)
= {\pm} \frac{\gamma}{\eta}K^\pm e^{\frac{\gamma}{\eta}(x_o-y_o)}.$$ 
\end{proposition}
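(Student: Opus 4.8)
The plan is to establish the smooth-fit property by obtaining probabilistic representations for the partial derivatives of $\hat v$ inside $\mathcal{C}_3$, and then passing to the limit as the point approaches the free boundary, using Lemma \ref{lemmasf} and Remark \ref{rem:check} to control the stopping times. I will treat the boundary $x_o = c_+(y_o)$ (the $+$ case); the $-$ case is entirely symmetric. First, I would fix the saddle point: by the discussion following \eqref{star} (via \cite[Theorem 2.1]{EkstromPes}), for $(x,y)\in\mathcal{C}_3$ the pair $(\tau^\star(x,y),\sigma^\star(x,y))$ is optimal, and on $\mathcal{C}_3$ we have the representation
\begin{equation*}
\hat v(x,y) = \E^{\Q}\!\left[\int_0^{\tau^\star\wedge\sigma^\star}\!\!\! e^{-\rho t}\big(1+e^{\frac\gamma\eta(X^{0,x}_t-Y^{0,y}_t)}\big)C'(X^{0,x}_t)\,\d t + e^{-\rho\tau^\star}\psi_+\mathds{1}_{\{\tau^\star<\sigma^\star\}} + e^{-\rho\sigma^\star}\psi_-\mathds{1}_{\{\tau^\star>\sigma^\star\}}\right],
\end{equation*}
where $\psi_\pm = \mp K^\pm(1+e^{\frac\gamma\eta(X^{0,x}_{\cdot}-Y^{0,y}_\cdot)})$ evaluated at the respective stopping time. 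Since $X^{0,x}_t = x+\mu_0 t+\eta W_t$ and $Y^{0,y}_t = y+\frac12(\mu_1+\mu_0)t$ depend on $(x,y)$ only through additive shifts, I can differentiate inside the expectation (justified by Assumption \ref{ass:C}, the dominated convergence theorem and the fact that $\hat v$ is known to be $C^1$ off the free boundary from parabolic interior regularity, cf.\ Proposition \ref{prop:regVhat}/\eqref{C3hat}). The crucial point is that when differentiating in $x$ (resp.\ $y$), the terms arising from differentiating the stopping times $\tau^\star,\sigma^\star$ vanish because $\hat v$ is continuous and the payoff equals $\psi_\pm$ exactly on $\mathcal{S}_3^\pm$ (this is the standard argument that optimal stopping times do not contribute to the derivative — the integrand is continuous and the boundary term matches).

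Carrying this out, differentiating in $x$ gives
\begin{align*}
\hat v_x(x,y) = \E^{\Q}\!\Big[&\int_0^{\tau^\star\wedge\sigma^\star}\!\!\! e^{-\rho t}\Big(\tfrac\gamma\eta e^{\frac\gamma\eta(X^{0}_t-Y_t)}C'(X^0_t) + \big(1+e^{\frac\gamma\eta(X^0_t-Y_t)}\big)C''(X^0_t)\Big)\d t \\
& - e^{-\rho\tau^\star}K^+\tfrac\gamma\eta e^{\frac\gamma\eta(X^0_{\tau^\star}-Y_{\tau^\star})}\mathds{1}_{\{\tau^\star<\sigma^\star\}} + e^{-\rho\sigma^\star}K^-\tfrac\gamma\eta e^{\frac\gamma\eta(X^0_{\sigma^\star}-Y_{\sigma^\star})}\mathds{1}_{\{\tau^\star>\sigma^\star\}}\Big],
\end{align*}
and a parallel formula for $\hat v_y$ in which the integrand gets the factor $-\frac\gamma\eta e^{\frac\gamma\eta(X^0_t-Y_t)}$ (with no $C''$ term, since $Y$ does not enter $C'(X^0)$), the $\tau^\star$-term gets $+K^+\frac\gamma\eta e^{\frac\gamma\eta(X^0_{\tau^\star}-Y_{\tau^\star})}$ and the $\sigma^\star$-term gets $-K^-\frac\gamma\eta e^{\frac\gamma\eta(X^0_{\sigma^\star}-Y_{\sigma^\star})}$. (Here $C''$ should be understood in the a.e./distributional sense permitted by $C\in C^{1,\mathrm{Lip}}_{\mathrm{loc}}$; alternatively one integrates by parts to avoid $C''$ altogether, which is cleaner and I would adopt that route.) Now let $(x_n,y_n)\to(x_o,y_o)$ with $(x_n,y_n)\in\mathcal{C}_3$ and $x_o=c_+(y_o)$. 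By Lemma \ref{lemmasf}, $\tau^\star(x_n,y_n)\to 0$ $\Q$-a.s.; moreover by Remark \ref{rem:check} (applied in reverse, since approaching $c_+$ from inside $\mathcal{C}_3$ the point is being squeezed toward $\mathcal{S}_3^+$) we also get control ensuring $\sigma^\star(x_n,y_n)\not\to 0$, so that $\mathds{1}_{\{\tau^\star<\sigma^\star\}}\to 1$ and $\mathds{1}_{\{\tau^\star>\sigma^\star\}}\to 0$ $\Q$-a.s. Consequently the time-integral term and the $\sigma^\star$-term vanish in the limit, while the $\tau^\star$-term converges, by dominated convergence and continuity of paths, to $-K^+\frac\gamma\eta e^{\frac\gamma\eta(x_o-y_o)}$ for $\hat v_x$ and to $+K^+\frac\gamma\eta e^{\frac\gamma\eta(x_o-y_o)}$ for $\hat v_y$, which is exactly the claimed limit.

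For the domination needed to apply dominated convergence, I would use Assumption \ref{ass:C}$(i)$–$(ii)$ together with standard moment estimates for $\sup_{t\le T}|X^{0,x}_t|$ (Gaussian tails) and the boundedness of $e^{\frac\gamma\eta(X^0_t-Y_t)}$ on the relevant time window — though here one must be slightly careful, since $e^{\frac\gamma\eta(X^0_t-Y_t)} = \Phi_t$ is only a martingale, not bounded; I would handle this by localizing with the stopping times $\tau_n$ as in the proof of Theorem \ref{thm:Verif} and noting that near the free boundary (which lies in the strip $x^*_+\le c_+\le x^*_-$ and satisfies $\tau^\star\to 0$) the exponential factor stays close to $e^{\frac\gamma\eta(x_o-y_o)}$ with overwhelming probability, so uniform integrability is not an issue. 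The main obstacle is precisely this interchange of limit, derivative and expectation together with the vanishing of the free-boundary variation terms: making rigorous that $\tfrac{\partial}{\partial x}\big(\tau^\star\wedge\sigma^\star\big)$ contributes nothing requires either an explicit $\varepsilon$-argument comparing $\hat v$ at $(x+\varepsilon,y)$ and $(x,y)$ using the \emph{same} (suboptimal) stopping rule for an upper bound and the optimal rule at the perturbed point for a lower bound (a sandwich that squeezes out the boundary terms), or invoking a general smooth-pasting theorem; I would go with the direct sandwich argument, exploiting that $\hat v - \psi_+$ vanishes to first order on $\partial\mathcal{C}_3\cap\mathcal{S}_3^+$ and that $\psi_\pm$ are smooth. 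Once this is in place, the value matching $\hat v = \psi_+$ on the boundary plus these two derivative limits give the full $C^1$ statement.
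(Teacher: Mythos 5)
Your proposal is correct and follows essentially the same route as the paper: a probabilistic representation of the derivatives inside $\mathcal{C}_3$, the mean--value--theorem ``sandwich'' argument (using the optimal $\tau^\star$ at the perturbed point together with the optimal $\sigma^\star$ at the original one to squeeze out the boundary-variation terms), and then the limit via Lemma \ref{lemmasf} together with dominated convergence, a change of measure, and Assumption \ref{ass:C} to control the exponential factor. The only differences are cosmetic: the paper first shifts to $\hat w := \hat v + K^+ q$ so that the target becomes $\hat w_x \to 0$ and only the $\sigma^\star$-payoff survives, and the fact that $\sigma^\star(x_n,y_n)\not\to 0$ is an immediate consequence of the strict separation $c_+(y_o)<c_-(y_o)$ (cf.\ Proposition \ref{prop:propcpm}) rather than of Remark \ref{rem:check}, which concerns only the hitting times of the interiors of $\mathcal{S}_3^\pm$.
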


We are now ready to derive the global $C^1$-regularity of $\hat{v}$ as well as the local boundedness of its second derivative in $x$.
 
\begin{proposition}
\label{prop:reg}
The value function $\hat{v}$ defined in \eqref{eq:vhat} satisfies $\widehat{v}\in C^{1}(\R^2;\R)$ and $\widehat{v}_{xx}\in L^\infty_{\text{loc}}(\R^2;\R)$.
\end{proposition}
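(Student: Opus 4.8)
The plan is to prove the two assertions piecewise --- on the open continuation region $\mathcal{C}_3$ and on the interiors of the two stopping regions --- and then to patch the pieces together across the free boundaries using the smooth-fit result of Proposition \ref{prop:sf}. Recall from Proposition \ref{prop:propcpm}.$(iii)$ that $\mathbb{R}^2$ splits into the open set $\mathcal{C}_3=\{c_+(y)<x<c_-(y)\}$, the open sets $\mathrm{Int}(\mathcal{S}_3^+)=\{x<c_+(y)\}$ and $\mathrm{Int}(\mathcal{S}_3^-)=\{x>c_-(y)\}$, and the two Lipschitz graphs $\Gamma_\pm:=\{x=c_\pm(y)\}$, which together form a Lebesgue--null set.

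\emph{Stopping regions.} First I would check that on $\mathcal{S}_3^{\pm}$ the value $\hat v$ coincides with the smooth ``obstacle'': $\hat v(x,y)=-K^+(1+e^{\frac{\gamma}{\eta}(x-y)})$ on $\mathcal{S}_3^+$ and $\hat v(x,y)=K^-(1+e^{\frac{\gamma}{\eta}(x-y)})$ on $\mathcal{S}_3^-$. Indeed, in the proof of Theorem \ref{thm:Verif} it is shown that for $x\le b_+(\varphi)$ one has $\overline V(x,\varphi)=\overline V(b_+(\varphi),\varphi)+K^+(1+\varphi)(b_+(\varphi)-x)$, whence $\overline V_x(\cdot,\varphi)\equiv -K^+(1+\varphi)$ on $\mathcal{S}_2^+=\{x\le b_+(\varphi)\}$, and symmetrically $\overline V_x(\cdot,\varphi)\equiv K^-(1+\varphi)$ on $\mathcal{S}_2^-$; transporting through the diffeomorphism $T$ and using $\hat v(x,y)=\overline V_x(x,e^{\frac{\gamma}{\eta}(x-y)})$ and $\mathcal{S}_3^{\pm}=T(\mathcal{S}_2^{\pm})$ gives the claim. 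Consequently $\hat v\in C^{\infty}(\mathrm{Int}(\mathcal{S}_3^{\pm});\mathbb{R})$ with, there, $\hat v_x=\mp\tfrac{\gamma}{\eta}K^{\pm}e^{\frac{\gamma}{\eta}(x-y)}$, $\hat v_y=\pm\tfrac{\gamma}{\eta}K^{\pm}e^{\frac{\gamma}{\eta}(x-y)}$ and $\hat v_{xx}=\mp(\tfrac{\gamma}{\eta})^2K^{\pm}e^{\frac{\gamma}{\eta}(x-y)}$.

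\emph{Continuation region.} On $\mathcal{C}_3$, Proposition \ref{prop:regVhat} provides $\widehat V\in C^{2,1}(\mathcal{C}_3;\mathbb{R})$ solving $(\rho-\mathcal{L}_{X,Y})\widehat V=g$ classically, with $g(x,y):=C(x)(1+e^{\frac{\gamma}{\eta}(x-y)})$. Since $\mathcal{L}_{X,Y}$ has constant coefficients (see \eqref{eq:LXY}), I would apply $\partial_x+\partial_y$ to this equation in the distributional sense and use $\hat v=\widehat V_x+\widehat V_y$ (see \eqref{Vhat-xy}) together with the cancellation $g_x+g_y=C'(x)(1+e^{\frac{\gamma}{\eta}(x-y)})$, to obtain that $\hat v$ is a distributional --- hence, by parabolic regularity, classical --- solution of $(\rho-\mathcal{L}_{X,Y})\hat v=C'(x)(1+e^{\frac{\gamma}{\eta}(x-y)})$ on $\mathcal{C}_3$. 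Because $C'\in\mathrm{Lip}_{\mathrm{loc}}(\mathbb{R})$ by Assumption \ref{ass:C}, the right-hand side is locally Lipschitz, so interior regularity for linear (possibly degenerate) parabolic equations (see \cite{Liebermann}, as already invoked in the proof of Proposition \ref{prop:regVhat}) gives $\hat v\in C^{2,1}_{\mathrm{loc}}(\mathcal{C}_3;\mathbb{R})$; in particular $\hat v\in C^1(\mathcal{C}_3;\mathbb{R})$ and $\hat v_{xx}$ is continuous on $\mathcal{C}_3$. Rearranging that PDE as $\tfrac12\eta^2\hat v_{xx}=\rho\hat v-C'(x)(1+e^{\frac{\gamma}{\eta}(x-y)})-\mu_0\hat v_x-\tfrac12(\mu_0+\mu_1)\hat v_y$ will further show, once global $C^1$-regularity is in hand, that $\hat v_{xx}$ extends continuously up to $\overline{\mathcal{C}_3}$.

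\emph{Patching and conclusion.} At this stage $\hat v$ is $C^1$ on each of the three open regions, continuous on $\mathbb{R}^2$ (Proposition \ref{prop:hatv}), and the one-sided limits of $\nabla\hat v$ at $\Gamma_{\pm}$ agree from both sides: the limits from inside $\mathcal{C}_3$ furnished by Proposition \ref{prop:sf} coincide with the gradient of the obstacle on $\mathrm{Int}(\mathcal{S}_3^{\pm})$ computed above (for $x_o=c_+(y_o)$, both give $(\hat v_x,\hat v_y)=(-\tfrac{\gamma}{\eta}K^+,\tfrac{\gamma}{\eta}K^+)\,e^{\frac{\gamma}{\eta}(x_o-y_o)}$, and symmetrically on $\Gamma_-$). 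Hence $\nabla\hat v$ admits a continuous extension $w$ to $\mathbb{R}^2$, and a standard argument --- reconstructing $\hat v$ from $w$ by line integrals, the reconstruction identity holding on the dense open set $\mathcal{C}_3\cup\mathrm{Int}(\mathcal{S}_3^+)\cup\mathrm{Int}(\mathcal{S}_3^-)$ and hence on all of $\mathbb{R}^2$ by continuity --- yields $\hat v\in C^1(\mathbb{R}^2;\mathbb{R})$ with $\nabla\hat v=w$. Finally $\hat v_{xx}$ exists and is continuous off the Lebesgue--null set $\Gamma_+\cup\Gamma_-$, and on any compact $K$ it is bounded by $\max\{\sup_{K\cap\overline{\mathcal{C}_3}}|\hat v_{xx}|,\ \sup_{K\cap\mathcal{S}_3^+}|\hat v_{xx}|,\ \sup_{K\cap\mathcal{S}_3^-}|\hat v_{xx}|\}<\infty$ by the continuous extensions obtained above; thus $\hat v_{xx}\in L^{\infty}_{\mathrm{loc}}(\mathbb{R}^2;\mathbb{R})$. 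I expect the main obstacles to be (i) justifying rigorously that $\hat v=\widehat V_x+\widehat V_y$ solves the differentiated equation in $\mathcal{C}_3$ in a sense strong enough for the regularity theory --- given that a priori one only knows $\widehat V\in C^{2,1}(\mathcal{C}_3)$ --- which is resolved by first working distributionally and then bootstrapping via \cite{Liebermann}; and (ii) the patching of the one-sided $C^1$-regularity across the merely Lipschitz free boundaries $\Gamma_{\pm}$, for which Proposition \ref{prop:sf} delivers precisely the gradient-matching that is required.
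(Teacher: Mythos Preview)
Your proposal is correct and follows essentially the same architecture as the paper's proof: establish interior regularity separately on $\mathcal{C}_3$ and on $\mathrm{Int}(\mathcal{S}_3^{\pm})$, invoke the smooth-fit Proposition \ref{prop:sf} to match the gradient across $\Gamma_{\pm}$ and conclude $\hat v\in C^1(\mathbb{R}^2)$, and then read off local boundedness of $\hat v_{xx}$ by rearranging the PDE in $\mathcal{C}_3$ (continuous extension to $\overline{\mathcal{C}_3}$) and using the explicit obstacle in the stopping regions.

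The only noteworthy difference is in how the PDE for $\hat v$ on $\mathcal{C}_3$ is obtained. The paper derives $(\rho-\mathcal{L}_{X,Y})\hat v=(1+e^{\frac{\gamma}{\eta}(x-y)})C'(x)$ directly from the Dynkin-game representation \eqref{eq:vhat} via the strong Markov property and standard parabolic Dirichlet theory; you instead differentiate the equation for $\widehat V$ along $\partial_x+\partial_y$ and bootstrap. Both routes are valid; the paper's is more self-contained (it does not require the $C^{2,1}$-regularity of $\widehat V$ as an input), while yours makes the link $\hat v=\widehat V_x+\widehat V_y$ do the work. Incidentally, for the identification of $\hat v$ with the obstacle on $\mathcal{S}_3^{\pm}$ you need not appeal to Theorem \ref{thm:Verif}: it follows immediately from the two-sided bound $-K^+q\le \hat v\le K^-q$ (each player may stop at time zero) together with the definition \eqref{S3+}.
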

\begin{proof}
By standard arguments based on the strong Markov property and Dirichlet boundary problems involving second-order partial differential equations of parabolic type, one can show that $\hat{v}$ in \eqref{eq:vhat} is a classical $C^{2,1}$-solution to
$(\rho-\mathcal{L}_{X,Y})u(x,y) - \big(1+e^{\frac{\gamma}\eta(x-y)}\big) C'(x)=0$, for all $(x,y) \in {\mathcal{C}}_{3},$
where $\mathcal{L}_{X,Y}$ is the second-order differential operator defined in \eqref{eq:LXY} and $\mathcal{C}_{3}$ is given by \eqref{C3hat} (see also Proposition \ref{prop:propcpm}.$(iii)$). Also, $\hat{v} \in C^{\infty}$ in the interior of $\mathcal{S}^{\pm}_3$. Hence, by Proposition \ref{prop:sf} we have that $\widehat{v}\in C^{1}(\R^2;\R)$. 

Arguing now as in the proof of Proposition \ref{prop:regVhat}, we have that $\widehat{v}_{xx}$ admits a continuous extension to $\overline{\mathcal{C}_3}$, and is therefore bounded therein. Hence, for $y \in \R$, we have that $\widehat{v}_{x}(\cdot,y)$ is Lipschitz continuous on $[c_+(y), c_-(y)]$, with Lipschitz constant $K(y)$ which is locally bounded on $\R$. 
Combining this with the fact that $\widehat{v}_{x}(\cdot,y)$ is infinitely many times continuously differentiable in $\mathcal{S}_3^{\pm}$, thus locally bounded therein, we conclude that $\widehat{v}_{xx}\in L^\infty_{\text{loc}}(\R^2;\R)$.
\end{proof}

\subsection{Integral equations for the free boundaries}
\label{sec:inteq}

By Proposition \ref{prop:reg}, and by using standard arguments based on the strong Markov property (cf.\ \cite{EkstromPes} and \cite{Peskir2008}), we have that the value function $\hat{v}$ defined in \eqref{eq:vhat} and the free boundaries $c_\pm$ satisfy
\begin{align*}
\left\{
\begin{array}{ll}
\big(\mathcal{L}_{X,Y} - \rho\big)\widehat{v}(x,y) = -(1+e^{\frac{\gamma}\eta(x-y)})C'(x), & c_+(y) <  x  < c_{-}(y),\,\,y\in\mathbb{R}\\[+6pt]
\big(\mathcal{L}_{X,Y} - \rho\big)\widehat{v}(x,y) = \rho K^+ (1+e^{\frac{\gamma}\eta(x-y)}), & x < c_+(y),\,\,y\in\mathbb{R}\\[+6pt]
\big(\mathcal{L}_{X,Y} - \rho\big)\widehat{v}(x,y) = - \rho K^- (1+e^{\frac{\gamma}\eta(x-y)}), & x > c_-(y),\,\,y\in\mathbb{R}\\[+6pt]
- K^+ (1+e^{\frac{\gamma}\eta(x-y)}) \leq \widehat{v}(x,y) \leq K^+ (1+e^{\frac{\gamma}\eta(x-y)}), & (x,y)\in\mathbb{R}^2 
\end{array}
\right.
\end{align*}
We recall that $\mathcal{L}_{X,Y}$ is the second-order differential operator defined in \eqref{eq:LXY}, $\widehat{v}\in C^{1}(\R^2;\R)$, $\widehat{v}_{xx}\in L^\infty_{\text{loc}}(\R^2;\R)$ and $\widehat{v} \in C^{2,1}$ inside $\mathcal{C}_3$ (cf.\ Propositions \ref{prop:propcpm}.$(iii)$ and \ref{prop:reg}). 
Hence, via the above results and a suitable application of (a week version of) It\^o's lemma (see, e.g., \cite[Lemma 8.1, Theorem 8.5]{BL} and \cite[Theorem 2.1]{CDeA}), we firstly obtain an integral representation of $\widehat{v}$; since this result is nowadays somehow classical, we omit details.

\begin{proposition}
\label{prop:representingu}
Consider the free boundaries $c_\pm$ defined in \eqref{eq:cpm} and $(X^0,Y^0)$ from \eqref{X0Y0}. Then, for any $(x,y) \in \mathbb{R}^2$, the value function $\widehat{v}$ of \eqref{eq:vhat} can be written as
\begin{align*}
&\widehat{v}(x,y) 
=\E^{\Q}_{(x,y)}\bigg[\int_{0}^{\infty}e^{-\rho s} \big(1+e^{\frac{\gamma}{\eta}(X^0_s - Y^0_s)}\big) C'(X_s^0) \mathds{1}_{\{ c_+(Y^0_s) < X^0_s < c_-(Y^0_s)\}} \d s\bigg] \nonumber \\
&+\E^{\Q}_{(x,y)}\bigg[
\int_{0}^{\infty}e^{-\rho s} \rho \big(1+e^{\frac{\gamma}{\eta}(X^0_s - Y^0_s)}\big) 
\big(K^- \mathds{1}_{\{X^0_s \geq c_-(Y^0_s)\}} - K^+ \mathds{1}_{\{X^0_s \leq c_+(Y^0_s)\}} \big) \d s \bigg] ,
\end{align*}
where $\E^{\Q}_{(x,y)}$ is the expectation under $\Q_{(x,y)}$ such that $(X^0,Y^0)$ starts at $(x,y) \in \R^2$.
\end{proposition}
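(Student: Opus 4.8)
The plan is to obtain the stated integral (Riesz-type) representation of $\widehat{v}$ by applying a suitable version of It\^o's formula to the process $s \mapsto e^{-\rho s}\widehat{v}(X^0_s, Y^0_s)$ under $\Q_{(x,y)}$, exploiting the regularity established in Proposition \ref{prop:reg}, namely $\widehat{v}\in C^1(\R^2;\R)$ with $\widehat{v}_{xx}\in L^\infty_{\text{loc}}(\R^2;\R)$ and $\widehat{v}\in C^{2,1}$ inside $\mathcal{C}_3$. Since the dynamics \eqref{X0Y0} of $(X^0,Y^0)$ are non-degenerate in the $x$-direction and $\widehat{v}$ has locally bounded generalised second $x$-derivative, a weak (Krylov-type) It\^o formula applies: concretely I would invoke \cite[Lemma 8.1, Theorem 8.5]{BL} or \cite[Theorem 2.1]{CDeA}, exactly as indicated in the text, to legitimise differentiating through $\partial\mathcal{C}_3$ without boundary local-time terms (the smooth-fit of Proposition \ref{prop:sf} guarantees the absence of such terms since $\widehat{v}\in C^1$ globally). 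Applying this to $e^{-\rho (s\wedge\tau_n)}\widehat{v}(X^0_{s\wedge\tau_n}, Y^0_{s\wedge\tau_n})$ for a localising sequence $\tau_n:=\inf\{s\geq0: |(X^0_s,Y^0_s)| > n\}\wedge n$, and using the PDEs satisfied by $\widehat{v}$ in the three regions listed just before the proposition, gives
\begin{align*}
\widehat{v}(x,y) &= \E^{\Q}_{(x,y)}\Big[e^{-\rho(s\wedge\tau_n)}\widehat{v}(X^0_{s\wedge\tau_n},Y^0_{s\wedge\tau_n})\Big]
+ \E^{\Q}_{(x,y)}\Big[\int_0^{s\wedge\tau_n}\hspace{-2mm} e^{-\rho u}(\rho - \mathcal{L}_{X,Y})\widehat{v}(X^0_u,Y^0_u)\,\d u\Big],
\end{align*}
where the local-martingale part has zero expectation after localisation.

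Next I would substitute the explicit form of $(\rho-\mathcal{L}_{X,Y})\widehat{v}$ dictated by the four-line system preceding the statement: it equals $(1+e^{\frac{\gamma}{\eta}(x-y)})C'(x)$ on $\mathcal{C}_3$, equals $-\rho K^+(1+e^{\frac{\gamma}{\eta}(x-y)})$ on $\mathcal{S}_3^+ = \{x\le c_+(y)\}$, and equals $\rho K^-(1+e^{\frac{\gamma}{\eta}(x-y)})$ on $\mathcal{S}_3^- = \{x\ge c_-(y)\}$, using the characterisation of the regions in Proposition \ref{prop:propcpm}.$(iii)$. This already produces the integrand appearing in the claimed formula, with the indicator decomposition $\mathds{1}_{\{c_+(Y^0_s)<X^0_s<c_-(Y^0_s)\}}$, $\mathds{1}_{\{X^0_s\le c_+(Y^0_s)\}}$, $\mathds{1}_{\{X^0_s\ge c_-(Y^0_s)\}}$ (the boundary $\partial\mathcal{C}_3$ has zero Lebesgue measure in time $\Q$-a.s.\ since $X^0$ is a non-degenerate diffusion and $c_\pm$ are Lipschitz graphs, so the precise allocation of the boundary is immaterial). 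Then I would let first $n\to\infty$ and then $s\to\infty$. For the integral term, dominated/monotone convergence applies after splitting $C'$ into positive and negative parts and using the polynomial growth bounds from Assumption \ref{ass:C}.$(i)$--$(ii)$ together with standard moment estimates on $X^0$ (which has $\mu_0$-drift and bounded diffusion), exactly the kind of estimate already used in the proof of Proposition \ref{contv}.$(iv)$ and in the remark on $M(x,\pi)$ preceding Lemma \ref{lemmasf}; this yields the absolute integrability of the right-hand side. For the boundary term $\E^\Q_{(x,y)}[e^{-\rho(s\wedge\tau_n)}\widehat{v}(X^0_{s\wedge\tau_n},Y^0_{s\wedge\tau_n})]$ I would show it vanishes in the iterated limit: since $\widehat{v} = \widehat{V}_x+\widehat{V}_y = \overline{v}$ in suitable coordinates is sandwiched between $-K^+(1+e^{\frac{\gamma}{\eta}(x-y)})$ and $K^-(1+e^{\frac{\gamma}{\eta}(x-y)})$ on $\R^2$ (equivalently $|v|\le \max\{K^+,K^-\}$ in the $(x,\pi)$-picture, since $v=V_x$ takes values in $[-K^+,K^-]$ outside $\mathcal{C}_1$ and is monotone), the integrand $e^{-\rho(s\wedge\tau_n)}|\widehat{v}|$ is bounded by $e^{-\rho(s\wedge\tau_n)}\max\{K^+,K^-\}(1+e^{\frac{\gamma}{\eta}(X^0_{s\wedge\tau_n}-Y^0_{s\wedge\tau_n})})$; an elementary computation of $\E^\Q_{(x,y)}[e^{-\rho t}e^{\frac{\gamma}{\eta}(X^0_t-Y^0_t)}]$ (a geometric-Brownian-motion-type expectation, since $X^0_t-Y^0_t = x-y+(\mu_0-\frac12(\mu_1+\mu_0))t+\eta W_t$) shows it decays to $0$ as $t\to\infty$ provided $\rho$ dominates the relevant exponential growth rate — if necessary one first takes $n\to\infty$ using dominated convergence on $\{\tau_n<s\}$ shrinking to a null set, then $s\to\infty$.

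The main obstacle I anticipate is the careful justification of the limit in the boundary term, i.e.\ controlling $\E^\Q_{(x,y)}[e^{-\rho t}\widehat{v}(X^0_t,Y^0_t)]\to 0$: one must ensure the exponential weight $e^{\frac{\gamma}{\eta}(X^0_t-Y^0_t)}$ does not overwhelm the discounting. The clean way around this is to observe that $\widehat{v}(X^0_t,Y^0_t)(1+e^{\frac{\gamma}{\eta}(X^0_t-Y^0_t)})^{-1}$ is exactly a conditional probability-weighted quantity bounded by $\max\{K^+,K^-\}$, and that $e^{-\rho t}(1+e^{\frac{\gamma}{\eta}(X^0_t-Y^0_t)})$ is, up to the deterministic factor $(1+\varphi)^{-1}$, a discounted martingale in the $\P$-picture — so passing back through the change of measure of Section \ref{sec:2transf} turns the boundary term into $\E_{(x,\pi)}[e^{-\rho t}|v(X^0_t,\Pi_t)|]\le \max\{K^+,K^-\}e^{-\rho t}\to 0$ trivially. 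The text explicitly flags this result as ``somehow classical'' and omits details, so a proof at the level of indicating the weak It\^o formula, the substitution of the PDE system, and these two limit arguments (with the change-of-measure shortcut for the boundary term) is exactly what is expected; no new idea beyond Propositions \ref{prop:reg}, \ref{prop:sf} and \ref{prop:propcpm} is needed.
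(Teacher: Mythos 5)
Your proposal is correct and follows exactly the route the paper indicates but omits: apply a weak (Krylov-type) It\^o formula legitimised by $\widehat{v}\in C^1(\R^2)$ and $\widehat{v}_{xx}\in L^\infty_{\text{loc}}$ (Propositions \ref{prop:sf}--\ref{prop:reg}), substitute the PDE system preceding the statement, localise, and pass to the limit using the transversality bound. In fact the boundary-term estimate $\E^{\Q}_{(x,y)}[e^{-\rho T}|\widehat{v}(X^0_T,Y^0_T)|]\le (K^+\vee K^-)(1+e^{\frac{\gamma}{\eta}(x-y)})e^{-\rho T}$ you derive is exactly the computation the paper carries out in the footnote to Proposition \ref{prop:inteq}, so your change-of-measure shortcut for the boundary term reproduces the paper's own argument.
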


The previous representation of $\widehat{v}$ allows us to determine a system of integral equations for $c_\pm$ (see \eqref{eq:cpm} for their definition and Proposition \ref{prop:propcpm} for their properties), which is the main aim of this section. 
To this end, denote by $G(z;m,\nu)$ the density function of a Gaussian random variable with mean $m$ and variance $\nu^2$. 
\begin{proposition}
\label{prop:inteq}
Let $q(x,y):= 1 + e^{\frac{\gamma}{\eta}(x-y)}$. The free boundaries $c_\pm$ defined in \eqref{eq:cpm} solve the system of integral equations
\begin{align*}
\mp K^{\pm} q(c_{\pm}(y),y) 
&= \int_{0}^{\infty} e^{-\rho s} \bigg(\int_{\R} q(z, Y^0_s) 
\bigg\{ C'(z) \mathds{1}_{\{ c_+(Y^0_s) < z < c_-(Y^0_s)\}} \\
&+ K^- \mathds{1}_{\{z \geq c_-(Y^0_s)\}} - K^+ \mathds{1}_{\{z \leq c_+(Y^0_s)\}} \bigg\} G(z; c_{\pm}(y) + \mu_0 s, \eta^2 s) \d z \bigg) \d s . \nonumber
\end{align*}
Moreover, $(c_+, c_-)$ is the unique solution pair belonging to the set $\mathcal{D}_+ \times \mathcal{D}_-$, where
\begin{align*}
\mathcal{D}_+ &\hspace{-0.1cm}:=\hspace{-0.1cm} \big\{g:\R \to \R:\, \text{$g$ is continuous, nondecreasing, s.t.\ $x^{*}_+ \leq g(y) \leq (C')^{-1}(-\rho K^+)$}\big\} \\
\mathcal{D}_- &\hspace{-0.1cm}:=\hspace{-0.1cm} \big\{g:\R \to \R:\, \text{$g$ is continuous, nondecreasing, s.t.\ $(C')^{-1}(\rho K^-) \leq g(y) \leq x^{*}_-$}\big\}.
\end{align*}
\end{proposition}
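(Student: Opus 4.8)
\emph{Deriving the integral equations.} The plan is to start from the probabilistic representation of $\widehat v$ in Proposition \ref{prop:representingu} and evaluate it along the free boundary. Fix $y \in \R$ and set $x = c_\pm(y)$. By Proposition \ref{prop:propcpm}.$(iii)$, such a point lies on $\partial\mathcal{C}_3$, specifically in $\mathcal{S}_3^\pm$, so by the very definition \eqref{S3+} of the stopping regions and the continuity of $\widehat v$ (Proposition \ref{prop:hatv}) we have the boundary value $\widehat v(c_\pm(y),y) = \mp K^\pm\big(1 + e^{\frac{\gamma}{\eta}(c_\pm(y)-y)}\big) = \mp K^\pm q(c_\pm(y),y)$. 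Substituting $(x,y)=(c_\pm(y),y)$ into the representation of Proposition \ref{prop:representingu} and using that, under $\Q^{(x,y)}_{(x,y)}$, the component $Y^0_s = y + \frac12(\mu_0+\mu_1)s$ is deterministic while $X^0_s = x + \mu_0 s + \eta W_s$ is Gaussian with mean $c_\pm(y)+\mu_0 s$ and variance $\eta^2 s$, I would rewrite the two expectations by conditioning: the inner $\R$-integral against $G(z;c_\pm(y)+\mu_0 s,\eta^2 s)\,\d z$ replaces the expectation over $X^0_s$, and $Y^0_s$ is plugged in deterministically. Equating the left- and right-hand sides yields exactly the stated system. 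Integrability of the integrand (so Fubini applies) follows from Assumption \ref{ass:C} via the polynomial growth of $C'$, the discounting $e^{-\rho s}$, and the Gaussian moments, together with boundedness of $q$ on the relevant $z$-range weighted by the Gaussian density; this is the routine estimate I would only sketch.

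\emph{Uniqueness.} This is the substantive part. The plan is to adapt the by-now standard ``Peskir-type'' argument for uniqueness of solutions to integral equations arising from optimal stopping (cf.\ \cite{EkstromPes,Peskir2008}), in the two-boundary Dynkin-game version. Suppose $(\tilde c_+,\tilde c_-) \in \mathcal{D}_+\times\mathcal{D}_-$ is another solution pair. First I would use $(\tilde c_+,\tilde c_-)$ to \emph{define} a candidate function $\tilde v(x,y)$ by the same integral representation as in Proposition \ref{prop:representingu} but with $c_\pm$ replaced by $\tilde c_\pm$; one checks, using a (weak) It\^o/Dynkin argument as in the paragraph preceding Proposition \ref{prop:representingu} (see \cite[Lemma 8.1, Theorem 8.5]{BL}, \cite[Theorem 2.1]{CDeA}), that $\tilde v$ is $C^{2,1}$ off the curves $\tilde c_\pm$, is $C^1$ across them precisely because $(\tilde c_\pm)$ solve the integral equations (this is where the equations are used — they encode the smooth/normal fit), and satisfies $(\mathcal{L}_{X,Y}-\rho)\tilde v = -q\,C'$ strictly between the curves and $(\mathcal{L}_{X,Y}-\rho)\tilde v = \pm\rho K^\pm q$ outside. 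Next, on the region $\{x \le \tilde c_+(y)\}$ one shows $\tilde v(x,y) = -K^+ q(x,y)$ and on $\{x\ge \tilde c_-(y)\}$ that $\tilde v(x,y) = K^- q(x,y)$, by applying the representation started from such a point and exploiting that $X^0$ started below $\tilde c_+$ stays below for a positive time, plus the defining ODE relation $(\rho-\mathcal{L}_{X,Y})\big(\mp K^\pm q\big) = \pm \rho K^\pm q$ (a direct computation using $\mathcal{L}_{X,Y}e^{\frac\gamma\eta(x-y)} = (\tfrac12\frac{\gamma^2}{\eta^2}\eta^2 + \mu_0\frac\gamma\eta - \frac12(\mu_0+\mu_1)\frac\gamma\eta)e^{\frac\gamma\eta(x-y)}$, which should collapse to $0$ by the choice of $\gamma$, leaving $(\rho-\mathcal{L}_{X,Y})q = \rho q$).

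\emph{Comparison step.} With $\tilde v$ so identified, I would run the four-step comparison: (1) show $\tilde v \le \mp K^\pm q$ on $\mathcal{S}_3^\pm$-type regions for the \emph{true} boundaries forces, via an optional-stopping/supermartingale argument applied to $e^{-\rho t}\tilde v(X^0_t,Y^0_t)$ plus the running cost, that $\tilde v \le \widehat v$ everywhere; (2) symmetrically, comparing on the candidate's own stopping region and using that $\widehat v$ satisfies the same variational inequalities, obtain $\widehat v \le \tilde v$; hence $\tilde v \equiv \widehat v$. (3) Then the candidate's continuation region $\{\tilde c_+ < x < \tilde c_-\}$ must coincide with $\mathcal{C}_3 = \{c_+ < x < c_-\}$, because on one hand inside $\{\tilde c_+<x<\tilde c_-\}$ we have $(\mathcal{L}_{X,Y}-\rho)\tilde v = -qC' \ne \pm\rho K^\pm q$ generically, while outside $\mathcal C_3$, $\widehat v$ solves the other equation; matching these with $\tilde v = \widehat v$ pins down the regions. (4) Conclude $\tilde c_\pm = c_\pm$ pointwise from equality of the regions and continuity/monotonicity within $\mathcal{D}_\pm$.

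\emph{Main obstacle.} The delicate point will be Step (3)–(4): proving that two solution pairs generate the \emph{same} continuation region, not merely the same value function off a null set. In the one-dimensional classical setting this uses that $C'$ is strictly monotone (Assumption \ref{ass:C}.$(iv)$) so that $-qC'$ cannot locally coincide with a constant multiple of $q$; one must transport this to the present parabolic two-dimensional geometry, along the lines $x\mapsto c_\pm^{-1}(x)$ of Lemma \ref{c-1}, and handle the weighting by $q(x,y)=1+e^{\frac\gamma\eta(x-y)}$ carefully. A secondary technical nuisance is justifying the weak It\^o formula for $\tilde v$, which is only $C^1$ with locally bounded second $x$-derivative and not $C^{2,1}$ globally — but this is exactly the setting covered by \cite[Lemma 8.1, Theorem 8.5]{BL} and \cite[Theorem 2.1]{CDeA}, so I would invoke those rather than reprove them.
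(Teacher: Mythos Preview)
Your derivation of the integral equations is exactly the paper's argument: evaluate the representation of Proposition~\ref{prop:representingu} at $x=c_\pm(y)$, use continuity of $\widehat v$ to get the left-hand side, and use that $Y^0$ is deterministic while $X^0$ is Gaussian under $\Q$ to rewrite the expectation as the double integral against $G$.

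For uniqueness, your plan is correct in spirit but differs from the paper in presentation, and has one genuine omission. The paper does \emph{not} carry out the four-step Peskir-type comparison you outline; it simply invokes \cite[Lemmata~3.15--3.16, Proposition~3.17, Theorem~3.18]{DeAFe}, which contain precisely the argument you sketch (construct a candidate value $\tilde v$ from a competing pair $(\tilde c_+,\tilde c_-)$, apply a weak It\^o formula, and run the super/sub-martingale comparison). So what you propose is essentially to reproduce that cited proof rather than cite it --- a legitimate and perhaps more self-contained route.

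The omission is this: the argument in \cite{DeAFe} is for a finite horizon $T$, and the terminal conditions on the free boundaries are used there to control the endpoint term $e^{-\rho T}\tilde v(X^0_T,Y^0_T)$. In the present infinite-horizon setting that role is played by a \emph{transversality condition}
\[
\lim_{T\uparrow\infty}\E^{\Q}_{(x,y)}\big[e^{-\rho T}\,u_\alpha(X^0_T,Y^0_T)\big]=0
\]
for the candidate $u_\alpha$, and the paper explicitly checks it for $\widehat v$ (via $|\widehat v|\le (K^+\vee K^-)q$ and the $\Q$-martingale property of $\Phi$). Your optional-stopping step tacitly relies on this vanishing, but you never state or verify it; without it the comparison $\tilde v\lessgtr\widehat v$ does not close as $T\to\infty$. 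You should add this transversality check (for both $\widehat v$ and the candidate $\tilde v$ built from $(\tilde c_+,\tilde c_-)\in\mathcal D_+\times\mathcal D_-$; the bounds defining $\mathcal D_\pm$ are what make it go through for $\tilde v$).

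A minor correction: the integral equations encode \emph{continuous} fit (the boundary value $\widehat v(c_\pm(y),y)=\mp K^\pm q(c_\pm(y),y)$), not smooth fit. Smooth fit was established separately in Proposition~\ref{prop:sf} and is used upstream to justify Proposition~\ref{prop:representingu}; it is not a consequence of the integral equations themselves.
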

\begin{proof}
The integral equations follow by 
taking $x=c_\pm(y)$ in Proposition \ref{prop:representingu}, 
employing the value function's continuity (i.e.\ $\widehat{v}(c_{\pm}(y),y) = \mp K^{\pm} \big(1+\exp\{{\gamma}(c_{\pm}(y) - y)/{\eta}\}\big)$, for any $y \in \R$), and finally noticing that $Y^0$ is a deterministic process and that $X^{0,c_{\pm}(y)}_s$ is Gaussian under $\Q$ with mean $c_{\pm}(y) + \mu_0 s$ and variance $\eta^2 s$. 

The fact that $c_\pm$ belong to the classes $\mathcal{D}_\pm$ follows from their continuity, monotonicity, and boundedness in Proposition \ref{prop:propcpm}. 

Finally, we can proceed as in \cite[Lemmata 3.15, 3.16, Proposition 3.17, Theorem 3.18]{DeAFe} to prove the uniqueness. 
Notice that the problem in \cite{DeAFe} has a finite time-horizon $T$ and the free boundaries satisfy suitable terminal conditions at $T$. However, a careful investigation of the proof of \cite[Lemma 3.15]{DeAFe} reveals that such terminal conditions can be replaced in our problem by the transversality condition (already satisfied by $\widehat{v}$\footnote{Using the relationship \eqref{eq:vhat} between $\hat v$ and $\overline v$ and the definition \eqref{X0Y0} of $(X^0,Y^0)$, we obtain 
\begin{align*}
& \E^{\Q}_{(x,y)}\Big[e^{-\rho T} |\widehat{v}(X^0_{T},Y^0_{T})| \Big] 
= \E^{\Q}_{(x,y)}\Big[e^{-\rho T} \Big|\overline{v}\Big(X^0_{T},e^{\frac{\gamma}{\eta}(X^0_{T} - Y^0_{T})}\Big)\Big| \Big] \nonumber \\
& \leq (K^+  \vee K^-) \E^{\Q}_{(x,\exp\{\frac{\gamma}{\eta}(x-y)\})}\Big[e^{-\rho T} \Big(1 + \Phi_{T}\Big)\Big] = (K^+  \vee K^-)\big(1 + e^{\frac{\gamma}{\eta}(x-y)}\big) e^{-\rho T},
\end{align*}
where the last step is due to the martingale property of the process $\Phi$.})
\begin{equation}
\label{eq:transv-fb}
\lim_{T\uparrow \infty} \E^{\Q}_{(x,y)}\big[ e^{-\rho T}{u}_{\alpha}(X^0_T, Y^0_T)\big]=0,
\end{equation}
imposed on a candidate value function ${u}_{\alpha}$ (cf.\ \cite[Eq.\ (3.56)]{DeAFe}). The arguments in the proofs of \cite[Lemma 3.16, Proposition 3.17, Theorem 3.18]{DeAFe} do not exploit the terminal conditions of the free boundaries, so that they can be adapted to the present setting.
\end{proof}

\begin{remark}
\label{rem:c-and-b}
The complete characterisation of the boundaries $c_\pm$ provided by Proposition \ref{prop:inteq} together with \eqref{Cpm},  yield a complete description of the free boundaries $b_\pm$, at which the optimal control rule $\widehat{P}$ constructed in \eqref{PXPhi0}--\eqref{PXPhi} (see Section \ref{Control} for details) commands the process $(X^{\widehat{P}}_t, \Phi_t)_{t \geq 0}$ to be reflected.

Indeed, once $c_\pm$ are determined by solving (numerically) the system of integral equations in Proposition \ref{prop:inteq}, we can use \eqref{Cpm} to obtain $b^{-1}_{\pm}$, and consequently determine $b_{\pm}$ by inverting \eqref{bpminv}. 
However, such a numerical treatment is non trivial and outside the scopes of the present work, we do not address it in this paper.
\end{remark}

\appendix

\section{Technical proofs} 

\subsection{Proof of Proposition \ref{prop:semiconc}} 
\label{A}
It follows from \eqref{eq:XPhiP}, that $\Phi_t = \varphi \mathcal{M}_t$, where $\mathcal{M}_t:=\exp\{\gamma W_t - {\gamma^2}t/{2}\}$, for any $t\geq0$ and $\varphi>0$. 

For any $(x,\varphi) \in \R \times (0,\infty)$ given and fixed, we clearly have $\overline{V}(x,\varphi) \leq \overline{\mathcal{J}}_{x,\varphi}(0)$. Hence, without loss of generality, we can restrict the attention to all those controls $P \in \mathcal{A}$ such that, for some constant $\kappa_o>0$ (changing in the rest of this proof), 
\begin{align}
\label{eq:semiconc-4}
\begin{split}
&\E^{\Q}\bigg[\int_0^{\infty} e^{-\rho t} \big(1 + \varphi \mathcal{M}_t \big) C(X^{x;P}_t) \d t \bigg] 
\leq \overline{\mathcal{J}}_{x,\varphi}(P) \leq  \overline{\mathcal{J}}_{x,\varphi}(0) \\
&= \E^{\Q}\bigg[\int_0^{\infty} e^{-\rho t} \big(1 + \varphi \mathcal{M}_t \big) C(X^{x;0}_t) \d t \bigg] 
\leq \kappa_o (1 + \varphi)(1 + |x|^p).
\end{split}
\end{align}
Here, the last inequality follows from a change of measure as in Section \ref{sec:2transf}, Assumption \ref{ass:C}.$(i)$, and standard estimates (recall that $X^{x;0}$ under $\Q$ evolves as in \eqref{XoPhi}, while under $\P$ it evolves as in \eqref{X0Pi}).
We denote this class of controls by $\mathcal{A}_o$.

Then, let $(x,\varphi),(x',\varphi')$ such that $|(x,\varphi)|\leq R$, $|(x',\varphi')|\leq R$ be given and fixed, and take $\lambda \in [0,1]$. 
Observe that, by using the definition \eqref{eq:Vbar} of $\overline{V}$  (and restricting to the class $\mathcal{A}_o$), we get  
\begin{align}
&\lambda  \overline{V}(x,\varphi)+(1-\lambda) \overline{V}(x',\varphi')-\overline{V}(\lambda(x,\varphi)+(1-\lambda)(x',\varphi')) \nonumber \\
&\leq \sup_{P \in \mathcal{A}_o}\E^{\Q}\bigg[\int_0^{\infty} \hspace{-3mm}e^{-\rho t} \Big[\lambda (1 + \varphi \mathcal{M}_t) C(X^{x;P}_t)  + (1-\lambda)(1 + \varphi' \mathcal{M}_t) C(X^{x';P}_t) 
\nonumber \\&
\qquad \qquad \qquad - \big(1 + (\lambda \varphi + (1-\lambda)\varphi')\mathcal{M}_t\big) C(X^{\lambda x + (1-\lambda)x';P}_t)\Big] \d t \nonumber \\
& + \int_0^{\infty} \hspace{-3mm}e^{-\rho t} K^+ \Big[\lambda (1 + \varphi \mathcal{M}_t) + (1-\lambda)(1 + \varphi' \mathcal{M}_t) - \big(1 + (\lambda \varphi + (1-\lambda)\varphi')\mathcal{M}_t\big)\Big] \d P^+_t \nonumber \\
& + \int_0^{\infty} \hspace{-3mm} e^{-\rho t} K^- \Big[\lambda (1 + \varphi \mathcal{M}_t) + (1-\lambda)(1 + \varphi' \mathcal{M}_t) - \big(1 + (\lambda \varphi + (1-\lambda)\varphi')\mathcal{M}_t\big)\Big] \d P^-_t \bigg]. \nonumber
\end{align}
By adding and subtracting 
$(1-\lambda) \varphi \mathcal{M} \big( C(X^{x';P}) + C(\lambda X^{x;P}_t + (1-\lambda)X^{x';P}_t) \big)$ in the $\d t$-integral appearing in the last equation, using the semiconcavity property of $C$ in Assumption \ref{ass:C}.$(iii)$ together with the solution $X^{x;P}$ of \eqref{eq:XPhiP}, as well as the fact that $\sup(f + g) \leq \sup(f) + \sup(g)$, we obtain
\begin{align}
& \lambda  \overline{V}(x,\varphi)+(1-\lambda) \overline{V}(x',\varphi')-\overline{V}(\lambda(x,\varphi)+(1-\lambda)(x',\varphi')) \nonumber \\ 
& \leq \sup_{P \in \mathcal{A}_o} \hspace{-1.5mm}\E^{\Q}\bigg[\int_0^{\infty} \hspace{-3mm}e^{-\rho t} \alpha_2\lambda(1-\lambda)\Big(1 + C(X^{x;P}_t) + C(X^{x';P}_t)\Big)^{(1-\frac{2}{p})}|x-x'|^2 \d t\bigg] \nonumber \\
& + \hspace{-1.5mm}\sup_{P \in \mathcal{A}_o}\hspace{-1.5mm}\E^{\Q}\bigg[\int_0^{\infty} \hspace{-3mm}e^{-\rho t} \varphi \mathcal{M}_t \big(\lambda C(X^{x;P}_t) + (1-\lambda)C(X^{x';P}_t) - C(\lambda X^{x;P}_t + (1-\lambda)X^{x';P}_t)\big) \d t \nonumber \\
& \qquad \qquad + \int_0^{\infty} \hspace{-3mm}e^{-\rho t} (1-\lambda) (\varphi - \varphi') \mathcal{M}_t \big( C(\lambda X^{x;P}_t + (1-\lambda)X^{x';P}_t) - C(X^{x';P}_t)\big) \d t\bigg]. \nonumber
\end{align}
Using again the assumed semiconcavity of $C$ and H\"older's inequality, we further conclude that
\begin{align*}
& \lambda  \overline{V}(x,\varphi)+(1-\lambda) \overline{V}(x',\varphi')-\overline{V}(\lambda(x,\varphi)+(1-\lambda)(x',\varphi'))  
\leq \alpha_2 \lambda(1-\lambda) 
|x-x'|^2 \\ 
&\times \bigg(
\E^{\Q}\bigg[\int_0^{\infty} \hspace{-3mm}e^{-\rho t} \d t\bigg]^{\frac{2}{p}} \hspace{-1.5mm}\sup_{P \in \mathcal{A}_o} \hspace{-1.5mm}\E^{\Q}\bigg[\int_0^{\infty} \hspace{-3mm}e^{-\rho t} \Big(1 + C(X^{x;P}_t) + C(X^{x';P}_t)\Big) \d t\bigg]^{(1-\frac{2}{p})} \\
&+\hspace{-1.5mm} \sup_{P \in \mathcal{A}_o}\hspace{-1.5mm}\E^{\Q}\bigg[\int_0^{\infty} \hspace{-3mm}e^{-\rho t} \varphi \mathcal{M}_t \Big(1 + C(X^{x;P}_t) + C(X^{x';P}_t)\Big)^{(1-\frac{2}{p})} \hspace{-2.5mm} \d t\bigg]\bigg)  
+ \alpha_1 \lambda(1-\lambda)|\varphi - \varphi'| \\
&\times|x-x'| \hspace{-1.5mm} \sup_{P \in \mathcal{A}_o}\hspace{-1.5mm}\E^{\Q}\bigg[\int_0^{\infty} \hspace{-3mm}e^{-\rho t} \mathcal{M}_t \Big(1 + C(\lambda X^{x;P}_t + (1-\lambda)X^{x';P}_t) + C(X^{x';P}_t)\Big)^{(1-\frac{1}{p})} \hspace{-1mm} \d t \bigg]. \nonumber
\end{align*}
Then, using $\E^{\Q}[\int_0^{\infty} e^{-\rho t} \mathcal{M}_t \d t] = 1/\rho$ and H\"older's inequality, we further obtain that
\begin{align}
&\lambda  \overline{V}(x,\varphi)+(1-\lambda) \overline{V}(x',\varphi')-\overline{V}(\lambda(x,\varphi)+(1-\lambda)(x',\varphi')) \nonumber \\ 
&\leq \alpha_2 \lambda(1-\lambda) |x-x'|^2 \bigg(\rho^{-\frac{2}{p}}\Big(\rho^{-1} + \kappa_o(1+|x|^p+|x'|^p)\Big)^{1-\frac{2}{p}} + \E^{\Q}\bigg[\int_0^{\infty} \hspace{-3mm}e^{-\rho t} \varphi\mathcal{M}_t \d t\bigg]^{\frac{2}{p}} \nonumber \\
&\times \hspace{-1mm}\E^{\Q}\bigg[\int_0^{\infty} \hspace{-3mm}e^{-\rho t} \varphi\mathcal{M}_t\Big(1 + C(X^{x;P}_t) + C(X^{x';P}_t)\Big) \d t\bigg]^{1-\frac{2}{p}} \bigg) 
+ \alpha_1 \lambda(1-\lambda) |\varphi - \varphi'||x-x'| \nonumber \\ 
&\times \hspace{-2mm}\sup_{P \in \mathcal{A}_o} \hspace{-1.5mm}\E^{\Q}\bigg[\int_0^{\infty} \hspace{-3mm}e^{-\rho t} \mathcal{M}_t \Big(1 \hspace{-1mm}+ \hspace{-1mm}C(\lambda X^{x;P}_t \hspace{-1mm}+ \hspace{-1mm}(1-\lambda)X^{x';P}_t) \hspace{-1mm}+ \hspace{-1mm}C(X^{x';P}_t)\Big) \d t \bigg]^{1-\frac{1}{p}} \hspace{-4.5mm}\E^{\Q}\bigg[\int_0^{\infty} \hspace{-3mm}e^{-\rho t} \mathcal{M}_t \d t \bigg]^{\frac{1}{p}} \hspace{-2mm}. \nonumber
\end{align}
Hence, employing estimate \eqref{eq:semiconc-4} in the above inequality, we find for some $\kappa>0$ that 
\begin{align}
& \lambda  \overline{V}(x,\varphi)+(1-\lambda) \overline{V}(x',\varphi')-\overline{V}(\lambda(x,\varphi)+(1-\lambda)(x',\varphi')) \nonumber \\ 
& \leq \kappa \lambda(1-\lambda) \Big( |x-x'|^2 (1 + \varphi)(1 + |x| + |x'|)^{p-2} + |\varphi - \varphi'||x-x'| (1 + |x| + |x'|)^{p-1} \Big) \nonumber 
\end{align}
which gives the claimed local semiconcavity.
\hfill \cvd


%


\subsection{Proof of Proposition \ref{prop:sf}}
\label{B}

We focus on proving the continuity of $\hat{v}_x$ across $c_+$, since the other claims can be obtained similarly. 
To that end, we firstly simplify the notation by defining (cf.\ \eqref{C3hat}--\eqref{S3+}) $q(x,y):=1+e^{\frac{\gamma}{\eta}(x-y)}$ and
\begin{equation*}
\hat{w}(x,y):=\widehat{v}(x,y) +K^+q(x,y) 
\begin{cases} > 0 , & \text{for all } (x,y) \in \R^2 \setminus {\mathcal{S}_3^+}, \\
= 0 , & \text{for all } (x,y) \in \mathcal{S}^+_3.
\end{cases}
\end{equation*}
Notice that, for every $(x,y)\in \R^2$ we have 
\begin{multline*}
\hat{w}(x,y) 
=\sup_{\tau\in\mathcal{T}}\inf_{\sigma\in\mathcal{T}}
\E^{\Q}\bigg[\int_0^{\tau\wedge\sigma} e^{-\rho t}q(X^{0,x}_t,Y^{0,y}_t)\big(C'(X^{0,x}_t)+\rho K^+\big)\d t \\ 
+(K^+ + K^-)e^{-\rho \sigma}q(X^{0,x}_\sigma,Y^{0,y}_\sigma) \mathbf{1}_{\{\sigma<\tau\}}\bigg].
\end{multline*}

Then, the desired continuity of $\hat{v}_x$ across $c_+$ is equivalent to 
\begin{equation}
\label{lim}
\lim_{\mathcal{C}_3 \ni (x,y)\to(x_o,y_o)} \hat{w}_{x}(x,y)=0, \quad \text{for}\quad x_o:=c_+(y_o) \quad \text{and} \quad y_o \in \R.
\end{equation}

To prove this, fix $(x,y)\in\mathcal{C}_3$ and let $\varepsilon>0$ be such that $(x+\varepsilon,y)\in\mathcal{C}_3$.
Denote by $\tau^{\star}\equiv \tau^{\star}(x,y)$ and $\check{\tau}^{\star}\equiv \check{\tau}^{\star}(x,y)$ from \eqref{star} and \eqref{check}, respectively. 
Then, define 
$\tau^{\star}_\varepsilon:=\tau^{\star}(x+\varepsilon,y)$ according to \eqref{star}
and $\check{\tau}^{\star}_\varepsilon:=\check{\tau}^{\star}(x+\varepsilon,y)$ according to \eqref{check}. 
In view of Proposition \ref{prop:propcpm}.$(iii)$, these take the form
\begin{align*}
\tau^{\star}_\varepsilon &= \inf\{t\geq 0: \ X^{0,x+\varepsilon}_t \leq c_+(Y^{0,y}_t)\} , \quad
\check{\tau}^{\star}_\varepsilon=\inf\{t\geq 0: \ X^{0,x+\varepsilon}_t< c_+(Y^{0,y}_t)\} ,
\\
\tau^{\star} &= \inf\{t\geq 0: \ X^{0,x}_t \leq c_+(Y^{0,y}_t)\}
\quad \text{and} \quad 
\check{\tau}^{\star}=\inf\{t\geq 0: \ X^{0,x}_t< c_+(Y^{0,y}_t)\}.
\end{align*}
By the regularity of the Brownian motion, we have 
$\tau^{\star}_\varepsilon=\check{\tau}^{\star}_\varepsilon$ and $\tau^{\star}=\check{\tau}^{\star}$,
and by the continuity of trajectories of the Brownian motion, we have 
\begin{equation}
\label{tau+}
\lim_{\varepsilon\downarrow 0} \check{\tau}^{\star}_\varepsilon \to \check{\tau}^{\star}
\quad \text{which eventually yields that} \quad 
\lim_{\varepsilon\downarrow 0} \tau^{\star}_\varepsilon \to \tau^{\star}.
\end{equation} 
Moreover, Proposition \ref{prop:propcpm}.$(iii)$ further implies that $\sigma^{\star} \equiv \sigma^{\star}(x,y)$ from \eqref{star} takes the form
$\sigma^{\star} =\inf\{t\geq 0: \ X^{0,x}_t\geq c_-(Y^{0,y}_t)\}.$
Then, using the Mean-Value Theorem,  
\begin{align*} 
&\frac{\hat{w}(x+\varepsilon,y) -\hat{w}(x,y)}{\varepsilon} 
\leq \E^{\Q}\bigg[\int_0^{\tau^\star_\varepsilon\wedge\sigma^\star} \hspace{-5pt}e^{-\rho t}
q_x(\Lambda^\varepsilon_t,Y^{0,y}_t)\big(C'(X^{0,x+\varepsilon}_t)+\rho K^+\big)\d t 
\bigg] \nonumber 
\\
&+\E^{\Q}\bigg[\int_0^{\tau^\star_\varepsilon\wedge\sigma^\star} 
\hspace{-5pt}e^{-\rho t}q(X^{0,x}_t,Y^{0,y}_t)C''(\Xi^\varepsilon_t)\d t + e^{-\rho \sigma^\star}\mathbf{1}_{\{\tau^\star_\varepsilon>\sigma^\star\}}(K^+ + K^-)q_x(\Theta^\varepsilon_{\sigma^\star})\bigg].
\end{align*}
where $\Lambda_t^\varepsilon, \Xi^\varepsilon_t\in (X^{0,x}_t, X^{0,x+\varepsilon}_t)$ and $\Theta^\varepsilon_{\sigma^\star}\in (X^{0,x}_{\sigma^\star}, X^{0,x+\varepsilon}_{\sigma^\star})$. 
Thus, using the dominated convergence theorem and also \eqref{tau+}, 
\begin{align*}
&\limsup_{\varepsilon\downarrow 0} \,\frac{\hat{w}(x+\varepsilon,y)-\hat{w}(x,y)}{\varepsilon}
\leq \E^{\Q}\bigg[\int_0^{\tau^\star\wedge\sigma^\star} \hspace{-3mm} e^{-\rho t}
q_x(X^{0,x}_t,Y^{0,y}_t)\big(C'(X_t^{0,x})+\rho K^+\big)\d t 
\bigg]
\\
&+\,\E^{\Q}\bigg[\int_0^{\tau^\star\wedge\sigma^\star} \hspace{-3mm}e^{-\rho t}q(X_t^{0,x},Y^{0,y}_t)C''(X^{0,x}_t)\d t 
+ e^{-\rho \sigma^\star}\mathbf{1}_{\{\tau^\star\geq \sigma^\star\}}(K^+ + K^-)q_x(X^{0,x}_{\sigma^\star})\bigg].
\end{align*}
With similar estimates, we can also prove the opposite inequality for the $\liminf$ as ${\varepsilon\downarrow 0}$.
This allows us to conclude that 
\begin{align*}
&\hat{w}_x(x,y) 
= \E^{\Q}\bigg[\int_0^{\tau^\star\wedge\sigma^\star} \hspace{-3mm}e^{-\rho t}
q_x(X^{0,x}_t,Y^{0,y}_t)\big(C'(X_t^{0,x})+\rho K^+\big)\d t 
\bigg]
\\
&+\,\E^{\Q}\bigg[\int_0^{\tau^\star\wedge\sigma^\star} \hspace{-3mm}e^{-\rho t}q(X_t^{0,x},Y^{0,y}_t)C''(X^{0,x}_t)\d t 
+ e^{-\rho \sigma^\star}\mathbf{1}_{\{\tau^\star\geq \sigma^\star\}}(K^+ + K^-)q_x(X^{0,x}_{\sigma^\star})\bigg].
\end{align*}
Then, we obtain \eqref{lim} by taking the limit as $(x,y)\to(x_0,y_0)$, using  Lemma \ref{lemmasf} and noticing that clearly $\liminf_{(x,y)\to (x_0,y_0)}\sigma^\star(x,y)>0$ (cf.\ \eqref{S3+}). 

In order to complete the proof, it remains to show that the dominated convergence theorem can be indeed invoked when taking limits above. 
We show this only for
$$
\E^{\Q}\bigg[\int_0^{\tau^\star_\varepsilon\wedge\sigma^\star}e^{-\rho t} q_x(\Lambda^\varepsilon_t,Y^{0,y}_t)\big(C'(X^{0,x+\varepsilon}_t)+\rho K^+\big)\d t 
\bigg]
$$
as other terms can be treated similarly. 
Notice that, since $q_x(\cdot,y)$ is positive and increasing, $C'(\cdot)$ is nondecreasing, and $\Lambda^{\varepsilon}_t \leq X^{0,x+\varepsilon}_t \leq X^{0,x+1}_t$ (for any $\varepsilon < 1$, without loss of generality), we can write
\begin{align*}
& \int_0^{\tau^\star_\varepsilon\wedge\sigma^\star} \hspace{-3mm}e^{-\rho t} q_x(\Lambda^\varepsilon_t,Y^{0,y}_t)\big(C'(X^{0,x+\varepsilon}_t)+\rho K^+\big)\d t \nonumber \\& 
\leq \frac{\gamma}{\eta} \int_0^{\infty}e^{-\rho t} \big(q(X^{0,x+1}_t,Y^{0,y}_t) + 1) \big(\big|C'(X^{0,x+1}_t)\big|+\rho K^+\big)\d t \nonumber 
\end{align*}
Now, on one hand, $\E^{\Q}[\int_0^{\infty}e^{-\rho t} |C'(X^{0,x+1}_t)| \d t]<\infty$ due to Assumption \ref{ass:C} and standard estimates on the Brownian motion. On the other hand, by using the definition of $q(\cdot,\cdot)$, 
one has $q(X^{0,x+1}_t,Y^{0,y}_t) = 1 + \Phi^{\varphi}_t$, with $\varphi \equiv e^{\frac{\gamma}{\eta}(x+1-y)}$. Hence,
\begin{align}
&\E^{\Q}\bigg[\int_0^{\infty}e^{-\rho t} q(X^{0,x+1}_t,Y^{0,y}_t)\left(\big|C'(X^{0,x+1}_t)\big|+\rho K^+\right)\d t\bigg] 
\nonumber \\& 
= \Big(1 + e^{\frac{\gamma}{\eta}(x+1-y)}\Big) \E\bigg[\int_0^{\infty}e^{-\rho t}\big(\big|C'(X^{0,x+1}_t)\big| + \rho K^+\big)\d t\bigg], 
\notag
\end{align}
where the last equality is due to a change of measure as in Section \ref{sec:2transf} and $X^0$ in the last expectation evolves as in \eqref{X0Pi}. 
But then, standard estimates together with the growth requirements on $C$ in Assumption \ref{ass:C} ensure that the last expectation in the above formula is finite, thus completing the proof.
\hfill \cvd

\subsection{A Technical Result}
\label{C}

\begin{lemma}
\label{lemma-app}
Let $W$ be a one-dimensional Brownian motion on the complete filtered probability space $(\Omega, \mathcal{F}, \mathbb{F}, \Q)$,  $\{\tau_{k}\}_{k\geq1}$ be a strictly increasing sequence of $\mathbb{F}$-stopping times diverging a.s., $\zeta, \beta, c>0$, $\alpha \in \mathbb{R}$, $f:\R \to \R$ be nonincreasing, and $g:\R \to \R$ be Lipschitz-continuous. Then, for each $t>0$,
\begin{eqnarray*}
&\Q\left(\bigcup_{k=1}^{\infty} \big\{t\in (\tau_{k-1},\tau_{k}]\big\}\cap \big\{t\in \arg\!\max_{s\in[\tau_{k-1},t]}(f(c e^{\alpha s+\beta W_{s}}) - \zeta W_{s}+g(s))\big\}\right)=0 \nonumber \\
&\Q\left(\bigcup_{k=1}^{\infty} \big\{t\in (\tau_{k-1},\tau_{k}]\big\}\cap \big\{t\in \arg\!\min_{s\in[\tau_{k-1},t]}(f(c e^{\alpha s+\beta W_{s}}) - \zeta W_{s}+g(s))\big\}\right)=0. \nonumber
\end{eqnarray*}
\end{lemma}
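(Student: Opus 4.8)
The plan is to fix $t>0$ and argue $\omega$-by-$\omega$ off a single $\Q$-null set, exploiting the law of the iterated logarithm (LIL) for the time-reversal of $W$ at $t$. Only the first identity (the $\arg\!\max$ one) needs to be treated; the second is symmetric.

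Set $h_s:=f(ce^{\alpha s+\beta W_s})-\zeta W_s+g(s)$ and let $L$ be a Lipschitz constant for $g$. First I would introduce the $\Q$-null event $N$ on which either $\tau_k\not\to\infty$, or the time-reversed process $s\mapsto W_t-W_{t-s}$ on $[0,t]$ (a Brownian motion on $[0,t]$) fails the one-sided LIL $\limsup_{\varepsilon\downarrow 0}(W_t-W_{t-\varepsilon})/\varepsilon=+\infty$; by the classical LIL, $\Q(N)=0$. Fix $\omega\notin N$ and suppose $\omega$ belongs to the event in the statement: then there is a (unique, by disjointness of the intervals) $k\ge1$ with $\tau_{k-1}(\omega)<t\le\tau_k(\omega)$ and $t\in\arg\!\max_{s\in[\tau_{k-1}(\omega),t]}h_s(\omega)$, so in particular $h_t(\omega)\ge h_{t-\varepsilon}(\omega)$ for all $\varepsilon\in(0,t-\tau_{k-1}(\omega))$.

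Now put $M:=\max\{|\alpha|/\beta,\ L/\zeta\}+1$. By the LIL there is $\varepsilon\in(0,t-\tau_{k-1}(\omega))$ with $W_t(\omega)-W_{t-\varepsilon}(\omega)>M\varepsilon$; since $\beta>0$ and $M>|\alpha|/\beta$ this forces $\alpha t+\beta W_t(\omega)>\alpha(t-\varepsilon)+\beta W_{t-\varepsilon}(\omega)$, hence $f(ce^{\alpha t+\beta W_t(\omega)})\le f(ce^{\alpha(t-\varepsilon)+\beta W_{t-\varepsilon}(\omega)})$ because $f$ is nonincreasing. Substituting this together with $|g(t)-g(t-\varepsilon)|\le L\varepsilon$ into $h_t(\omega)\ge h_{t-\varepsilon}(\omega)$ gives
$$-\zeta\big(W_t(\omega)-W_{t-\varepsilon}(\omega)\big)\ \ge\ \big[f(ce^{\alpha(t-\varepsilon)+\beta W_{t-\varepsilon}(\omega)})-f(ce^{\alpha t+\beta W_t(\omega)})\big]+\big[g(t-\varepsilon)-g(t)\big]\ \ge\ -L\varepsilon,$$
so $W_t(\omega)-W_{t-\varepsilon}(\omega)\le(L/\zeta)\varepsilon$, contradicting $W_t(\omega)-W_{t-\varepsilon}(\omega)>M\varepsilon\ge(L/\zeta)\varepsilon$. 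Thus the event is contained in $N$ and has $\Q$-measure zero. For the $\arg\!\min$ identity one repeats the argument using the complementary LIL statement $\limsup_{\varepsilon\downarrow0}(W_{t-\varepsilon}-W_t)/\varepsilon=+\infty$, picking $\varepsilon$ with $W_{t-\varepsilon}(\omega)-W_t(\omega)>M\varepsilon$ (so that now $\alpha(t-\varepsilon)+\beta W_{t-\varepsilon}(\omega)>\alpha t+\beta W_t(\omega)$, and $f$ nonincreasing again points the exponential term in the favourable direction) and deducing the same contradiction from $h_t(\omega)\le h_{t-\varepsilon}(\omega)$.

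I do not expect a genuine obstacle: the only point requiring care is that both the index $k$ and the fine scale $\varepsilon$ furnished by the LIL are $\omega$-dependent, but since everything is done for a fixed $\omega$ in the full-measure set, it suffices that $\varepsilon$ can be taken below the strictly positive threshold $t-\tau_{k-1}(\omega)$, which the $\limsup$ in the LIL provides. Conceptually, the lemma simply records that the one-sided extremality of $h$ at the right endpoint $t$ — where the nonincreasing $f$ absorbs the exponential term in the favourable direction and the Lipschitz $g$ contributes only $O(\varepsilon)$ — is incompatible with the unbounded small-scale oscillation of Brownian motion at $t$.
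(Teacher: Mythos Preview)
Your argument is correct and follows the same core idea as the paper: the argmax condition at the right endpoint $t$ is incompatible with the unbounded local oscillation $\limsup_{s\to t^-}(W_t-W_s)/(t-s)=+\infty$, once the monotonicity of $f$ has been used to neutralise the exponential term and Lipschitz continuity has been used to control $g$. The route differs in one respect worth recording: the paper first performs a Girsanov change of measure to absorb the linear drift $\alpha s$ into the Brownian motion (reducing to $f(ce^{\beta W^*_s})$ with a new Brownian motion $W^*$ and a new Lipschitz function $h$), and then argues conditionally on each $\Omega_k=\{t\in(\tau_{k-1},\tau_k]\}$. You instead handle the drift directly, by choosing the LIL-scale $\varepsilon$ so that $\beta(W_t-W_{t-\varepsilon})$ dominates $|\alpha|\varepsilon$; this makes the argument entirely pathwise on a single null set and avoids the change of measure altogether. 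Your variant is slightly more elementary and self-contained; the paper's variant has the modest advantage of making the reduction to the driftless case explicit (which some readers may find conceptually cleaner). Either way the substance is the same.
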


\begin{proof} We show the claim only for the argmax.
Fix $t>0$ and set  $\Omega_{k}:=\big\{t\in (\tau_{k-1},\tau_{k}]\big\}$. 
The proof can be concluded by showing that for each $k\geq 1$, 
$$
\Q\big(t \in \argmax_{s\in[\tau_{k-1},t]}(f(c e^{\alpha s+\beta W_{s}}) - \zeta W_{s}+g(s)) \ | \ \Omega_{k}\big)=0 .
$$
With a change of measure, the above is equivalent to 
\begin{eqnarray*}
&\widehat{\Q}\big(t\in \argmax_{s\in[\tau_{k-1},t]}(f(c e^{\beta W^{*}_{s}}) - \zeta 
W^{*}_{s}+h(s))\ | \ \Omega_{k} \big)=0 \nonumber,
\end{eqnarray*}
for another $\mathbb{F}$-Brownian motion $W^*$ and $h:\R \to \R$ Lipschitz-continuous. 
Now, for each $\tau_{k-1}< s\leq t$,  we have
$$
\big(f(c e^{\beta W^{*}_{t}}) - \zeta W^{*}_{t}\big) - \big(f(c e^{\beta W^{*}_{s}}) - \zeta W^{*}_{s}\big)\leq -\zeta(W^*_{t}-W^{*}_{s}), \ \ \mbox{if} \  \ W^*_{t}-W^{*}_{s}\geq 0.
$$
By the path-properties of the Brownian motion, we have $\widehat{\Q}\big(\cdot\,|\, \Omega_{k}\big)$-a.s.
$$
\limsup_{s\to t^{-}} \tfrac{W^{*}_{t} - W^{*}_{s}}{t-s}=+\infty.
$$
In particular, $\widehat{\Q}\big(\cdot\,|\, \Omega_{k}\big)$-a.s., there exists a sequence $s_{n}\to t^{-}$ (possibly depending on $\omega$) such that  
$$
 W^*_{t}-W^{*}_{s_{n}}\geq 0 \;\;\; \forall n \ \ \mbox{and}  \ \ \limsup_{n\to \infty}\tfrac{W^{*}_{t} - W^{*}_{s_{n}}}{t-s_{n}}=+\infty.
$$
Hence, the claim follows by observing that, $\widehat{\Q}\big(\cdot\,|\, \Omega_{k}\big)$-a.s., we have 
\begin{align*}
&\liminf_{s\to t^{-}}\frac{1}{t-s}\big[\big(f(ce^{\beta W^{*}_{t}}) - \zeta W^{*}_{t}+h(t)\big) - \big(f(ce^{\beta W^{*}_{s}}) - \zeta W^{*}_{s}+h(s)\big)\big]\\
&\leq \liminf_{n\to\infty}\frac{1}{t-s_{n}}\big[\big(f(ce^{\beta W^{*}_{t}}) - \zeta W^{*}_{t}+h(t)\big) - \big(f(ce^{\beta W^{*}_{s_{n}}}) - \zeta W^{*}_{s_{n}}+h(s_{n})\big)\big]\\
&\leq   \liminf_{n\to \infty}\Big(- \zeta \tfrac{W^{*}_{t} - W^{*}_{s_{n}}}{t-s_{n}} \Big)+\limsup_{n\to\infty} \,\tfrac{|h(t)-h(s_{n})|}{t-s_{n}}\\
&=  -\zeta \limsup_{n\to \infty} \,\tfrac{W^{*}_{t} - W^{*}_{s_{n}}}{t-s_{n}}+\limsup_{n\to\infty} \,\tfrac{|h(t)-h(s_{n})|}{t-s_{n}}= -\infty. 
\end{align*}
\end{proof}



\indent \textbf{Acknowledgments.} We thank the anonymous Associate Editor and Referee for constructive comments.



\bibliographystyle{siamplain}

\end{document}